\documentclass[journal]{IEEEtran}

\usepackage{graphicx}
\usepackage[cmex10]{amsmath}
\usepackage{amsthm}
\usepackage{graphicx}
\usepackage{amssymb}  
\usepackage{array}
\usepackage{multirow}
\usepackage{color}
\usepackage[backref=none]{hyperref}

\newtheorem{thm}{Theorem}
\newtheorem{proposition}[thm]{Proposition}
\newtheorem{lemma}[thm]{Lemma}
\newtheorem{remark}{Remark}
\newtheorem{definition}{Definition}
\newtheorem{assumption}{Assumption}


\usepackage{mdwmath}
\usepackage{mdwtab}


\usepackage{color}

\usepackage{eqparbox}

\usepackage{url}


\begin{document}
\title{Approximate controllability of the Schr\"{o}dinger Equation with a polarizability term in higher Sobolev norms
\thanks{This  work has been supported by the INRIA Nancy-Grand Est ``CUPIDSE'' Color
program.
The work of M. Caponigro and T. Chambrion was partially supported by French Agence National de
la Recherche ANR ``GCM'', program ``BLANC-CSD'', contract number NT09-504590.
The work of T. Chambrion was partially supported by European Research Council ERC StG
2009 ``GeCoMethods'', contract number 239748. 
}
}

\author{Nabile~Boussa\"{i}d,
        Marco~Caponigro,
        and~Thomas~Chambrion
\thanks{N. Boussa\"{i}d is with  Universit\'e de Franche--Comt\'e,
Laboratoire de math\'ematiques, 16 route de Gray, 25030
Besan\c{c}on Cedex, France.}
\thanks{M. Caponigro is with  Conservatoire National des Arts et M\'etiers, \'Equipe M2N, 292 rue Saint-Martin, 75003, Paris, France.}%
\thanks{T. Chambrion is with  Universit\'e de Lorraine, Institut \'Elie Cartan de Lorraine, UMR 7502, 
Vand{\oe}uvre-l\`es-Nancy, F-54506, France, CNRS, Institut \'Elie Cartan de Lorraine, UMR 7502, Vand{\oe}uvre-l\`es-
Nancy, F-54506, France and Inria, Villers-l\`es-Nancy, F-54600, France.}
}
\maketitle

\begin{abstract}
This analysis is concerned with the controllability  of quantum systems in 
the case where the standard dipolar approximation, involving the permanent dipole moment of the system, 
 is corrected with a polarizability term, involving the field induced dipole moment. 
Sufficient conditions for approximate controllability are given. For transfers between eigenstates of the free Hamiltonian, the control laws are explicitly given. The results apply also for unbounded or non-regular potentials.
\end{abstract}

\section{INTRODUCTION}

\subsection{Control of quantum systems}

The state of a quantum system evolving on a Riemannian manifold $\Omega$ is described by its wavefunction 
$\psi$, an element of {the unit sphere of } $L^2(\Omega,\mathbf{C})$.
When the system is submitted to an electric field, 
the time evolution of the wavefunction is given by the Schr\"{o}dinger equation
\begin{equation}\label{EQ_bilinear_Schrod}
\mathrm{i}\frac{\partial \psi}{\partial t}=(-\Delta +V(x))\psi + \mu(u,x) \psi(t), \quad x\in \Omega,
\end{equation}
where $\Delta$ is the Laplace--Beltrami operator on $\Omega$, $V:\Omega \to \mathbf{R}$ is a potential describing the evolution of the
system in absence of control, $u$ is the scalar  function depending on time and modeling the intensity of the electric field and 
$\mu:  \mathbf{R} \times  \Omega\to \mathbf{R}$ describes the effect of the external field.
In the dipolar approximation
we expand $\mu$ to the first order in $u$ and we then represent 
$\mu(u,x)$ as $uW(x)$,
where $W$ is a real valued function.

Although the dipolar approximation usually gives excellent results for low intensity fields, it is sometimes necessary, 
when dealing with stronger fields, to consider a better approximation of $\mu$ involving the first two terms of its expansion 
in $u$. Therefore an approximation of
$\mu(u,x)$ by $u W_1(x) + u^2 W_2(x)$, for two real functions $W_{1}(x)$ and $W_{2}(x)$, gives a more accurate representation of the external field.  The need for a modeling involving the quadratic term appears, for instance, in the control of orientation 
of a rotating HCN molecule,  \cite{DBAKU} and \cite{DBAKU2}.

The aim of this work is to present controllability properties for the controlled Schr\"{o}dinger equation, using the dipolar term $u W_1$ and the polarizability term $u^{2}W_{2}$. 

This question has already been tackled by various authors in \cite{Coron,Grigoriou} (for finite dimensional approximations) 
and in \cite{Morancey} (for the infinite dimensional version of the problem, when $\Omega$ is a bounded set of $\mathbf{R}^n$ and $W_1,W_2$ are smooth functions).  All the results in these contributions rely on Lyapunov methods. 

The novelty of our contribution is the use of geometric methods inspired by finite 
dimensional geometric control theory \cite{book}, in the spirit of~\cite{Schrod} and \cite{Schrod2}. 
This point of view allows us  to  state the first available positive approximate 
controllability results for system (\ref{EQ_bilinear_Schrod}) in the case where the  
potentials $W_1$ and $W_2$ are unbounded or noncontinuous. Moreover, when considering the 
physically relevant problem of 
transferring the quantum system from an energy level to another,  our method is 
constructive and provides simple fully explicit control laws.

A shorter and simplified version of this analysis has been presented in 
51$^{\rm st}$ Conference on Decision 
and Control (see~\cite{6426619}). In this work, we present several 
extensions with respect to the
 proceeding. The main results have been sensibly improved, providing approximate controllability 
 in higher regularity norms, improved upper bound of the $L^1$ norm of the controls 
 and  approximate controllability between eigenstates 
 coupled by a non-trivial chain of connectedness. Moreover, two applications to rather general 
 examples are discussed.

\subsection{Framework and  notations}\label{SEC_framework}

In order to exploit the powerful tools of functional analysis,  
we set the problem in a  more abstract framework. In a separable Hilbert space $H$, endowed with the Hermitian product 
$\langle \cdot , \cdot \rangle$, we consider the following control system 
\begin{equation}\label{EQ_main}
\frac{d}{dt} \psi=(A+u(t) B +u^2(t) C) \psi,
\end{equation}
where $(A,B,C,k)$ satisfies Assumption \ref{ASS_1} for some $k$.

\begin{assumption}\label{ASS_1}
$k$ is a positive number and
$(A,B,C)$ is a triple of (possibly unbounded) linear operators in $H$ such that
\begin{enumerate}
\item $A$ with domain $D(A)$ is skew-adjoint, with pure point spectrum 
$(-\mathrm{i}\lambda_j)_{j \in \mathbf{N}}$ 
with  $\lambda_{j+1}> \lambda_j>0$ for every $j$ in $\mathbf{N}$ and $\lim_{j\to \infty}\lambda_j 
= \infty$ ;
\item for every $(u_1,u_2)$ in $\mathbf{R}^2$, $A+u_1 B +u_2 C$ is 
skew-adjoint with domain $D(A)$;
\item 
for every $(u_1,u_2)$ in $\mathbf{R}^2$, $|A+u_1 B +u_2 C|^{k/2}$ has domain 
$D(|A|^{k/2})$;
\item $\displaystyle{\!\!\!\!\!\sup_{\psi \in D(|A|^k)\setminus\{0\}}\!\!\!
\left(\frac{ |\Re \langle |A|^k
\psi,B \psi \rangle |}{ |\langle |A|^k \psi, \psi \rangle|} \!+\!\frac{ |\Re \langle |A|^k
\psi,C \psi \rangle |}{ |\langle |A|^k \psi, \psi \rangle|}\!
\right)\!\!\!<\!+\infty};$
\item there
exist $d>0$ and $0\leq r<k$ such that $\|B\psi \|\leq d \||A|^{r/2}\psi \|$ 
and $\|C\psi \|\leq d \||A|^{r/2}\psi \|$ for
every $\psi$ in $D(|A|^{r/2})$. \label{ASS_BC_Ak_borne}
\end{enumerate} 
\end{assumption}
If $(A,B,C,k)$ satisfies Assumption~\ref{ASS_1}, we define the \emph{coupling constant}
$c_{(A,B,C,k)}$ as the lower bound of the set of every real $c$ such that
for every $\psi$ in $D(|A|^k)$, $ |\Re \langle |A|^k
\psi,B \psi \rangle |\leq c |\langle |A|^k \psi, \psi \rangle|$ and
$ |\Re \langle |A|^k \psi, C\psi \rangle |\leq c |\langle |A|^k \psi, \psi \rangle|$.

 From Assumption~\ref{ASS_1} there exists a Hilbert
basis $(\phi_k)_{k\in \mathbf{N}}$ of $H$ made of eigenvectors of $A$. For every
$j$, $A \phi_j =-\mathrm{i} \lambda_j \phi_j$.
Since $A$ is skew-adjoint and diagonalizable in a Hilbert basis $(\phi_k)_{k\in \mathbf{N}}$, $|A|$ is self-adjoint positive and
diagonalizable in the same basis $(\phi_k)_{k\in \mathbf{N}}$. 
The eigenvalues of $|A|$ are the moduli of the eigenvalues of $A$. 
We define the $k$-norm 
of an element $\psi$ of $D(|A|^k)$ as $\|\psi\|_k:=\| |A|^k \psi \|$. 
When $\Omega$ is a compact Riemannian  manifold and $A=\mathrm{i}\Delta$, the $k$-norm is 
equivalent to the Sobolev $H^{2k}(\Omega,\mathbf{C})$ norm on $\Omega$. 
 
In the following, we say that $u:\mathbf{R}\to \mathbf{R}$ is \emph{piecewise constant} if there exists a 
non decreasing sequence $(t_j)_{j\in \mathbf{N}}$ of $\mathbf{R}$ that tends to $+\infty$ such that $u$ is 
constant on $[t_j,t_{j+1})$ for every $j$ in 
$\mathbf{N}$.

If $(A,B,C,k)$ satisfies Assumption \ref{ASS_1}, for every $u$ in $\mathbf{R}$, $A+uB+u^2C$ 
generates a group of unitary propagators $t\mapsto e^{t(A+uB+u^2C)}$. By concatenation, 
one can define the solution of (\ref{EQ_main}) for every piecewise constant $u$, for every 
initial condition $\psi_0$ given at time $t_0$.  We denote this solution $t\mapsto 
\Upsilon^{u,(A,B,C)}_{t,t_0} \psi_0$ or simply $t\mapsto 
\Upsilon^{u}_{t,t_0} \psi_0$ when it does not create ambiguities. 

We will see in Section~\ref{SEC_HIGH_NORM_WC} below that 
the mapping $u\mapsto \Upsilon^u_{T,t_0}\psi_0$ admits a unique continuous extension (for the 
$\| \cdot \|_{L^1}+\| \cdot \|_{L^2}$ norm) to $L^1(\mathbf{R},\mathbf{R})\cap
 L^2(\mathbf{R},\mathbf{R})$, for every fixed $T\geq 0$.

The operators $B$ and $C$ can be seen as infinite dimensional matrices in the basis 
$(\phi_j)_{j \in \mathbf{N}}$. For every $j,l \in \mathbf{N}$,  we denote 
$b_{jl}=\langle \phi_j, B \phi_l \rangle$ and $c_{jl}=\langle \phi_j, C \phi_l \rangle$.
For every $N$, the orthogonal projection $\pi_N:H\rightarrow H$ on the space spanned by 
the first  $N$ eigenvectors of $A$ is defined by
$$
\pi_N(x)=\sum_{l=1}^N \langle \phi_l,x\rangle \phi_l \quad \quad \mbox{for every } x \mbox{ in } H.
$$
Let $\mathcal{L}_N$ be the range of $\pi_N$.  The \emph{compressions} of $A$, $B$ and $C$ 
at order $N$ are the finite rank
operators $A^{(N)}=\pi_N A_{\upharpoonright \mathcal{L}_N}$, $B^{(N)}=\pi_N
B_{\upharpoonright \mathcal{L}_N} $ and $C^{(N)}=\pi_N
C_{\upharpoonright \mathcal{L}_N}$ respectively. 
The \emph{Galerkin approximation}  of (\ref{EQ_main})
of order $N$ is the system
\begin{equation}\label{eq:sigma}
\dot x = (A^{(N)} + u B^{(N)} +u^2 C^{(N)}) x, \quad x \in \mathcal{L}_{N}
\end{equation}

Physically, the gap $\lambda_j-\lambda_k$ represents the amount of energy necessary to jump from 
the energy level $k$ (i.e., the eigenstate $\phi_k$ of $A$ associated with eigenvalue 
$-\mathrm{i}\lambda_k$) to energy level $j$. Our controllability results rely on the 
possibility to excite, independently, different energy gaps 
$\lambda_j-\lambda_k$. More precisely we have the following set of definitions.

\begin{definition}
A pair $(j,l)$ in $\mathbf{N}^2$ is a \emph{weakly}  \emph{non-degenerate transition} of 
$(A,B,C)$ if $|b_{jl}|+|c_{jl}|\neq 0$
and, for every $m,n$, 
$|\lambda_j-\lambda_l|=|\lambda_n-\lambda_m|$ implies $\{j,l\}=\{m,n\}$ or $|b_{mn}|+|c_{mn}|=0$ 
or $\{m,n\}\cap\{j,l\}=\emptyset$.
\end{definition}

\begin{definition}
A 
pair $(j,l)$ in $\mathbf{N}^2$ is a  \emph{strongly} \emph{non-degenerate transition} of 
$(A,B,C)$ if $|b_{jl}|+|c_{jl}|\neq 0$
and, for every $m,n$, 
$|\lambda_j-\lambda_l|=|\lambda_n-\lambda_m|$ implies 
 $\{j,l\}=\{m,n\}$.

\end{definition}
\begin{definition}

A pair $(j,l)$ in $\mathbf{N}^2$ is a  \emph{non-resonant transition} of 
$(A,B,C)$ if $|b_{jl}|+|c_{jl}|\neq 0$
and, for every $m,n$, 
$|\lambda_j-\lambda_l|=|\lambda_n-\lambda_m|$ implies 
 $\{j,l\}=\{m,n\}$ or $|b_{mn}|+|c_{mn}|=0$.
\end{definition}
\begin{definition}

A subset $S$ of $\mathbf{N}^2$ is a \emph{chain of connectedness} of $(A,B,C)$ if there exists 
$\alpha$ in $\mathbf{R}$ such that, for every $m,n \in \mathbf{N}$, there exists a finite 
sequence $s_1=(s_1^1,s_1^2),s_2=(s_2^1,s_2^2),\ldots,s_r=(s_r^1,s_r^2) \in S$ such that $s_{1}^1 = m$, $s_{r}^2 = n$, 
$s_{l}^2=s_{l+1}^1$ for every $l=1,\ldots,r-1$ and  
$\langle \phi_{s_{l}^2},(B+\alpha C) \phi_{s_l^1} \rangle \neq 0$ for every $l=1,\ldots,r$. A 
chain of connectedness $S$ of $(A,B,C)$ is \emph{weakly non-degenerate} 
(resp. \emph{strongly non-degenerate}, resp. \emph{non-resonant})  if every $s$ in $S$ is a weakly 
non-degenerate (resp. strongly non-degenerate, resp. non-resonant) transition of 
$(A,B,C)$.

\end{definition}

\begin{remark}
The notion of non-degenerate transition is central in quantum chemistry for several decades,
see for instance \cite[C-XIII]{cohen77} or \cite{PhysRevA.36.4321}, 
and crucial for our geometric techniques. However, we are still in the early ages of control 
of infinite dimensional 
 semi-linear conservative systems and  the terminology is not completely fixed yet. 
 The notion of  ``non-resonant'' transitions appears in \cite{Schrod2}. 
 What we call in this analysis a ``weakly non-degenerate transition'' has been 
called 
 \emph{non-degenerate} in \cite{periodic}.
 Yet another (much stronger) notion of non-resonant transition appears in \cite{Schrod}. 
 Let us cite the promising  ``Lie-Galerkin''
condition recently introduced in 
 \cite{boscain:hal-00789279} as a 
 possible unifying framework for non-degeneracy in quantum control. 
\end{remark}
The main reason for the introduction of the notion of strongly non-degenerate 
transitions is the following stability result.

\begin{lemma}\label{LEM_perturb_chaine_connexite}
Let $(A,B,C,k)$ satisfy Assumption \ref{ASS_1}. If $S$ is a strongly non-degenerate chain 
of connectedness of $(A,B, C)$, then $S$ is a strongly non-degenerate  chain of 
connectedness of $(A,B+\alpha C,0)$ for almost every $\alpha$ in $\mathbf{R}$. 
In particular $S$ is a non-resonant chain of 
connectedness of $(A,B+\alpha C,0)$ for almost every $\alpha$ in $\mathbf{R}$. 
\end{lemma}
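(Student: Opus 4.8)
The plan is to exploit the defining feature of strong non-degeneracy: its spectral condition involves only the eigenvalues $\lambda_j$ of $A$ and is therefore insensitive to replacing the pair $(B,C)$ by $(B+\alpha C,0)$. Accordingly I would split the assertion into its two constituents---that every $s\in S$ remain a strongly non-degenerate transition, and that $S$ remain a chain of connectedness---and show that each can fail only on a null set of parameters $\alpha$.

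First, fix $s=(j,l)\in S$. For the triple $(A,B+\alpha C,0)$ the entry governing the transition $(j,l)$ is $b_{jl}+\alpha c_{jl}$ and the third operator contributes nothing, so $(j,l)$ is a strongly non-degenerate transition of $(A,B+\alpha C,0)$ exactly when $b_{jl}+\alpha c_{jl}\neq 0$; the spectral implication $|\lambda_j-\lambda_l|=|\lambda_n-\lambda_m|\Rightarrow\{j,l\}=\{m,n\}$ is inherited verbatim from the hypothesis since it mentions neither $B$ nor $C$. Because $s$ is a transition of $(A,B,C)$ we have $|b_{jl}|+|c_{jl}|\neq 0$, hence $\alpha\mapsto b_{jl}+\alpha c_{jl}$ is a nontrivial affine map vanishing at most at one point $\alpha_s$. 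The skew-symmetry of $B$ and $C$ forced by items (1)--(2) of Assumption~\ref{ASS_1} gives $\langle\phi_l,(B+\alpha C)\phi_j\rangle=-\overline{\langle\phi_j,(B+\alpha C)\phi_l\rangle}$, so the coefficient appearing in the chain condition for $s$ vanishes for the very same $\alpha_s$.

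Next I would gather the exceptional parameters into $\mathcal{N}=\bigcup_{s\in S}\{\alpha_s\}$. Since $S\subseteq\mathbf{N}^2$ is countable, $\mathcal{N}$ is a countable union of singletons and so has Lebesgue measure zero. For every $\alpha\notin\mathcal{N}$ each $s\in S$ is at once a strongly non-degenerate transition of $(A,B+\alpha C,0)$ and carries a nonzero coefficient $\langle\phi_{s^2},(B+\alpha C)\phi_{s^1}\rangle$. To recover connectedness, recall that $S$ is a chain of connectedness of $(A,B,C)$, so there is a parameter $\alpha_0$ for which every pair $(m,n)$ is joined by a finite path in $S$ whose transitions $s_l$ satisfy $\langle\phi_{s_l^2},(B+\alpha_0 C)\phi_{s_l^1}\rangle\neq 0$. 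For $\alpha\notin\mathcal{N}$, however, every transition $s\in S$ is active in the sense $\langle\phi_{s^2},(B+\alpha C)\phi_{s^1}\rangle\neq 0$, so these same paths remain admissible for $(A,B+\alpha C,0)$ and, the third operator being $0$, any value of the existential constant in the definition works. Hence $S$ is a chain of connectedness of $(A,B+\alpha C,0)$, strongly non-degenerate by the previous step.

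The ``in particular'' clause is then immediate: once $\{j,l\}=\{m,n\}$ is forced, the weaker non-resonant alternative ``$\{j,l\}=\{m,n\}$ or $|b_{mn}|+|c_{mn}|=0$'' holds a fortiori, so a strongly non-degenerate chain of connectedness is automatically non-resonant. The only delicate point is the transfer of connectedness, where one must observe that genericity in $\alpha$ activates all of $S$ simultaneously and hence preserves---rather than merely perturbs---the paths supplied by the hypothesis; the measure-theoretic bookkeeping is otherwise routine precisely because $S$ is countable.
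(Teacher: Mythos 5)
Your proof is correct and follows essentially the same route as the paper's: both reduce the claim to the observation that each transition $(j,l)\in S$ fails for $(A,B+\alpha C,0)$ only when the nontrivial affine map $\alpha\mapsto b_{jl}+\alpha c_{jl}$ vanishes, which excludes at most one $\alpha$ per transition, and then use countability of $S$ (you take a countable union of singletons where the paper takes the complementary countable intersection of full-measure sets). Your extra care with the skew-symmetry relation $\langle\phi_l,(B+\alpha C)\phi_j\rangle=-\overline{\langle\phi_j,(B+\alpha C)\phi_l\rangle}$ and with the persistence of the connecting paths is a welcome but inessential refinement of the same argument.
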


\begin{proof} Let $(p,q) \in S\subset \mathbf{N}^2$ and $\alpha$ be a real number. The transition  $(p,q)$ is 
strongly non-degenerate for $(A,B+\alpha C,0)$ if and only if $b_{pq}+\alpha c_{pq} \neq 0$. Hence, for every $\alpha$ in 
$$
\mathfrak{R}^S=\bigcap_{(j,k)\in S} \{\beta \in \mathbf{R}|b_{jk}+\beta c_{jk}\neq 0\},
$$
$S$ is strongly non-degenerate chain of connectedness of $(A,B+\alpha C,0)$. The set $\mathfrak{R}^S$ is a 
countable intersection of complementary to a point subsets of 
$\mathbf{R}$ with full measure, hence $\mathfrak{R}^S$ has full measure in  
$\mathbf{R}$ as the complementary of a countable set.
\end{proof}

\subsection{Main results}
Our main results consist of sufficient conditions for various notions of approximate 
controllability for system (\ref{EQ_main}).

\begin{thm}\label{THE_main}
 Assume that $(A,B,C,k)$ satisfies Assumption \ref{ASS_1} with $k\geq 1$ and that $(A,B,C)$ 
 admits a strongly non-degenerate chain of connectedness. Then, for every 
 $\varepsilon>0 $, for every $N$ in $\mathbf{N}$, for every unitary operator 
 $\hat{\Upsilon}:H\to H$, 
for almost every $\delta>0$, there exist $T_\varepsilon>0$ and a piecewise constant function
$u_{\varepsilon}:[0,T_{\varepsilon}]\rightarrow \{0,\delta\}$  such that
$$
\| 
\Upsilon^{u_{\varepsilon},(A,B,C)}_{T_{\varepsilon},0}\phi_j-\hat{\Upsilon}
\phi_j\|_{r} <\varepsilon,
$$
for every $j\leq  N$ and for every $r<k/2$.
\end{thm}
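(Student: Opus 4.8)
The plan is to reduce the quadratic system (\ref{EQ_main}) to a genuinely bilinear Schr\"odinger equation and then to invoke the Galerkin-approximation machinery in high Sobolev norm. The crucial observation is that the admissible controls take values in $\{0,\delta\}$: on every interval where $u=\delta$ the generator is $A+\delta B+\delta^2 C=A+\delta(B+\delta C)$, while on intervals where $u=0$ it is $A$. Hence, once $\delta$ is fixed, the trajectories of (\ref{EQ_main}) driven by $\{0,\delta\}$-valued controls coincide \emph{exactly} with those of the bilinear system $\dot\psi=(A+v\,(B+\delta C))\psi$ driven by $\{0,\delta\}$-valued controls $v$. First I would check that Assumption~\ref{ASS_1} is inherited by the triple $(A,B+\delta C,0,k)$: skew-adjointness and the domain identities of items (2)--(3) hold because $A+u_1(B+\delta C)$ is one of the operators already covered by item (2) with $u_2=u_1\delta$, and the coupling bound (4) and the relative bound (5) for $B+\delta C$ follow from those for $B$ and $C$ by the triangle inequality, at the cost of multiplying the constants by $1+|\delta|$.

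Next I would select $\delta$. By Lemma~\ref{LEM_perturb_chaine_connexite}, for almost every $\delta$ the strongly non-degenerate chain of connectedness $S$ of $(A,B,C)$ is a non-resonant chain of connectedness of $(A,B+\delta C,0)$. Fixing such a $\delta$ and writing $B_\delta:=B+\delta C$, it then suffices to prove that the bilinear system $\dot\psi=(A+vB_\delta)\psi$, which admits the non-resonant chain of connectedness $S$, is approximately controllable in $r$-norm for every $r<k/2$ using piecewise constant controls valued in $\{0,\delta\}$. This is precisely the regime in which the geometric techniques for bilinear conservative systems apply.

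The remaining argument rests on two pillars. \textbf{(i) Finite-dimensional transfer:} for $n\geq N$ large, the Galerkin approximation (\ref{eq:sigma}) of $(A,B_\delta,0)$ is approximately controllable on $\mathcal{L}_n$. Along each transition $(p,q)\in S$ one drives a rotation in the $(\phi_p,\phi_q)$-plane by a two-valued control nearly resonant with the gap $|\lambda_p-\lambda_q|$, non-resonance guaranteeing (through a rotating-wave/averaging analysis) that the other coupled transitions are left approximately fixed, while the free drift $e^{tA}$ supplies the relative phases; composing such elementary motions along $S$ approximates, on $\mathcal{L}_N$, the action of the prescribed unitary, here $\pi_n\hat\Upsilon\pi_n$. \textbf{(ii) Good Galerkin approximation in high norm:} the continuity and approximation properties of $u\mapsto\Upsilon^{u}_{T,0}$ developed in the sequel give that the full propagator is approximated, in $r$-norm for $r<k/2$, by the $n$-th Galerkin propagator, uniformly over controls of a priori bounded $L^1$-norm; this is where Assumption~\ref{ASS_1}(3)--(5) and the coupling constant $c_{(A,B,C,k)}$ enter. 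Combining the two: one chooses $n$ so large that the truncation error is below $\varepsilon/2$ in $r$-norm, realizes the finite-dimensional transfer within $\varepsilon/2$, and concludes for every $j\leq N$.

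I expect the main obstacle to be pillar (ii), namely the approximation in the strong $r$-norm rather than in $L^2$. In $L^2$ the unitarity of the propagators makes the truncation error easy to bound, but for $\|\cdot\|_r=\||A|^r\cdot\|$ one must control the high-frequency tail of the trajectory uniformly in the control. The key is an a priori estimate showing that $t\mapsto\||A|^k\Upsilon^{u}_{t,0}\psi\|$ grows at most exponentially in $\|u\|_{L^1}$, obtained by differentiating $\langle|A|^k\psi,\psi\rangle$ along the flow and invoking the coupling bound of item (4); interpolation between the $k$-norm and the $L^2$-norm then yields the required decay of the tail in $r$-norm for $r<k/2$, the trajectory itself remaining in $D(|A|^{k/2})$ by item (3). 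Once this uniform high-norm estimate is secured, the reduction above turns the quadratic problem into the already-understood bilinear one, and the theorem follows.
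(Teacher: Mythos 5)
Your proposal follows essentially the same route as the paper: the identity $uB+u^2C=u(B+\delta C)$ for $\{0,\delta\}$-valued controls reduces the problem to the bilinear system $(A,B+\delta C,0)$, Lemma~\ref{LEM_perturb_chaine_connexite} selects the admissible $\delta$, the bilinear approximate controllability (which the paper outsources to Theorem~\ref{THE_Control_collectively} and the tracking Lemma~\ref{lem:tripoint}, rather than re-deriving via Galerkin plus averaging as you sketch) gives the $r=0$ case, and the $L^1$-uniform energy estimate of Proposition~\ref{PRO_croissance_norme_A} combined with interpolation (Lemma~\ref{lem:interpolation}) upgrades the convergence to the $r$-norm for $r<k/2$. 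The argument is correct; only note that the a priori bound is on $\||A|^{k/2}\Upsilon^{u}_{t,0}\psi\|$ (equivalently $\langle|A|^{k}\psi,\psi\rangle$), not on $\||A|^{k}\Upsilon^{u}_{t,0}\psi\|$, which is exactly why the conclusion is limited to $r<k/2$.
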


\begin{thm}\label{THE_main_weak}
 Assume that $(A,B,C,k)$ satisfies Assumption \ref{ASS_1} with $k\geq 1$ and let $S$ be
 a subset of $\mathbf{N}^2$.
 Let $\delta>0$ be such that $S$ is a weakly non degenerate chain of connectedness of $(A,B+\delta C,0)$. 
 Then, for every 
 $\varepsilon>0 $ and for every $p,q$ in $\mathbf{N}$, there exist 
$T_\varepsilon>0$ and a piecewise constant function $u_{\varepsilon}:[0,T_{\varepsilon}]\rightarrow \{0,\delta\}$  such that
$$
\| \Upsilon^{u_{\varepsilon},(A,B,C)}_{T_{\varepsilon},0}\phi_p-\phi_q\|_r<\varepsilon,
$$
for every $r<k/2$. 
\end{thm}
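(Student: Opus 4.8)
The plan is to reduce the statement to an approximate controllability result for the \emph{bilinear} system and then to recycle, through the chain $S$, a single elementary construction. First I would absorb the polarizability term into the dipole term: since $u$ is only allowed to take the values $0$ and $\delta$, on every interval where $u=\delta$ the generator of (\ref{EQ_main}) is $A+\delta B+\delta^2 C=A+\delta(B+\delta C)$, while on every interval where $u=0$ it is $A$. Setting $\tilde B:=B+\delta C$, the family of propagators reachable with controls valued in $\{0,\delta\}$ therefore coincides with the family reachable by the bilinear system $\dot\psi=(A+v\tilde B)\psi$ with $v\in\{0,\delta\}$. By hypothesis (and in the spirit of Lemma~\ref{LEM_perturb_chaine_connexite}), $S$ is a weakly non-degenerate chain of connectedness of $(A,\tilde B,0)$, so it suffices to steer $\phi_p$ arbitrarily close to $\phi_q$, in every $\|\cdot\|_r$ norm with $r<k/2$, for this bilinear system.

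Next I would exploit the chain structure to localise the problem to a single transition. By definition of a chain of connectedness there is a finite sequence $(s_1,\dots,s_{r_0})$ in $S$ joining $p$ to $q$ with $\langle\phi_{s_l^2},\tilde B\phi_{s_l^1}\rangle\neq0$; since the propagators depend continuously on the initial datum, a triangle-inequality/concatenation argument reduces the task to realising, approximately and up to a phase, each elementary transfer $\phi_{s_l^1}\mapsto\phi_{s_l^2}$. For a single weakly non-degenerate transition $(j,l)$ with $\langle\phi_j,\tilde B\phi_l\rangle\neq0$ I would pass to the interaction frame $w=e^{-tA}\psi$, in which the generator becomes $v(t)\,e^{-tA}\tilde B e^{tA}$, whose $(m,n)$ entry is $v(t)\,e^{\mathrm{i}(\lambda_m-\lambda_n)t}\langle\phi_m,\tilde B\phi_n\rangle$. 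Choosing $v$ a $\{0,\delta\}$-valued periodic control resonant with the gap $\omega=|\lambda_j-\lambda_l|$ and averaging, the effective dynamics on the relevant Galerkin space reduces to a two-level Rabi oscillation between $\phi_j$ and $\phi_l$: weak non-degeneracy guarantees that every competing transition with the same gap either is uncoupled or acts on a pair disjoint from $\{j,l\}$, hence does not interfere with the population exchange. Running the resonant control for the duration of a $\pi$-pulse produces $\phi_j\mapsto e^{\mathrm{i}\theta}\phi_l$, and a free evolution $e^{sA}$ (an isometry for every $\|\cdot\|_r$) then adjusts $\theta$ so that the net effect reaches $\phi_q$ rather than a multiple of it.

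It remains to transport these finite-dimensional, $L^2$-level constructions to the full system in the stronger norm $\|\cdot\|_r$, $r<k/2$. For this I would invoke the good Galerkin truncation of Section~\ref{SEC_HIGH_NORM_WC}, which compares $\Upsilon^{u}_{T,0}$ with the propagators of the Galerkin approximations (\ref{eq:sigma}) uniformly over controls of bounded $\| \cdot \|_{L^1}+\| \cdot \|_{L^2}$ norm. The a priori estimate underlying it comes from Assumption~\ref{ASS_1}: since $A$ is skew-adjoint and commutes with $|A|^k$, one has $\Re\langle|A|^k\psi,A\psi\rangle=0$, so along (\ref{EQ_main})
$$
\frac{d}{dt}\||A|^{k/2}\psi\|^2=2\,\Re\langle|A|^k\psi,(uB+u^2C)\psi\rangle\le 2\,c_{(A,B,C,k)}\,(|u|+u^2)\,\||A|^{k/2}\psi\|^2,
$$
where item~4 of Assumption~\ref{ASS_1} has been used. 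Gr\"onwall's inequality then bounds $\||A|^{k/2}\psi(T)\|$ in terms of $\|u\|_{L^1}$ and $\|u\|_{L^2}$, and interpolation controls $\|\cdot\|_r$ for every $r<k/2$; item~5 ensures the terms neglected in the Galerkin truncation are genuinely small. The key point is to carry out the resonant construction with controls whose $\| \cdot \|_{L^1}+\| \cdot \|_{L^2}$ norm stays bounded independently of the accuracy, which is possible because the amplitude $\delta$ is fixed and only the number of resonant periods varies moderately.

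The hard part, I expect, is precisely this last step: establishing the good Galerkin truncation in the higher norm $\|\cdot\|_r$ uniformly over controls of bounded $L^1$ norm. Because $B$ and $C$ are merely relatively bounded with respect to $|A|^{r/2}$ with $r<k$ (item~5), a naive energy estimate would lose derivatives; it is item~4, bounding the growth of the $k$-norm by the coupling constant $c_{(A,B,C,k)}$, that closes the estimate and makes the $k/2$-norm propagate with a bound depending only on $\|u\|_{L^1}+\|u\|_{L^2}$. Once this uniform approximation is in hand, the finite-dimensional resonant transfer, the concatenation along $S$, and the phase correction by free evolution assemble into the desired estimate $\|\Upsilon^{u_\varepsilon,(A,B,C)}_{T_\varepsilon,0}\phi_p-\phi_q\|_r<\varepsilon$.
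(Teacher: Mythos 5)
Your overall architecture coincides with the paper's: the identity $uB+u^2C=u(B+\delta C)$ for $u\in\{0,\delta\}$ reduces the problem to the bilinear system $(A,B+\delta C,0)$; the chain $S$ is handled by concatenating elementary transfers (the paper simply applies Theorem~\ref{THE_main_12} iteratively along $S$); each elementary transfer is produced by a resonant control followed by a phase correction via free evolution; and the passage from the $H$-norm to the $r$-norm, $r<k/2$, uses exactly the energy estimate of Proposition~\ref{PRO_croissance_norme_A} (your Gr\"onwall computation), the uniform $\|\cdot\|_{L^1}+\|\cdot\|_{L^2}$ bound on the controls, and the interpolation Lemma~\ref{lem:interpolation}.

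There is, however, a genuine gap in your single-transition step. You assert that a $\{0,\delta\}$-valued periodic control resonant with the gap $\omega=|\lambda_j-\lambda_l|$ yields, by averaging, a clean two-level Rabi oscillation. The averaging result (Proposition~\ref{PRO_RWA}, i.e.\ the rotating wave approximation) is only asymptotically exact as the amplitude of the control tends to zero: the paper applies it to $u^\ast/n$ with $u^\ast(t)=1+\sin(2\pi t/T)$ and lets $n\to\infty$. With the amplitude frozen at the prescribed value $\delta$, the off-resonant and higher-order contributions are of order $\delta$ and do not vanish; moreover a $\{0,\delta\}$-valued periodic signal is square-wave-like, so its higher Fourier harmonics are nonzero and the selectivity condition~\eqref{eq:connmag1} (vanishing of the harmonics at $n\omega$ for $n\in\mathcal{N}$, $n>1$) fails --- weak non-degeneracy does \emph{not} protect you against transitions involving $j$ or $l$ whose gap is an integer multiple of $\omega$. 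The paper closes this hole with a two-step procedure: first realise the transfer for $(A,B+\delta C,0)$ with the small smooth control $u^\ast/n$ (Propositions~\ref{PRO_RWA} and~\ref{PRO_averaging}), then invoke the tracking Lemma~\ref{lem:tripoint} --- which rests on the time reparameterisation $\mathcal{P}$ and the recurrence Lemma~\ref{LEM_recurrence} of Section~\ref{SEC_Finite_dim} --- to approximate the resulting propagator by one generated by a piecewise constant control with values in $\{0,\delta\}$ and the same $L^1$ norm. Only after that conversion does your reduction $uB+u^2C=u(B+\delta C)$ take you back to the original system $(A,B,C)$. Without this tracking step (or an equivalent substitute), the control $u_\varepsilon$ you describe is not shown to exist.
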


\begin{thm}\label{THE_main_12}
Assume that $(A,B,C,k)$ satisfies Assumption \ref{ASS_1} with $k\geq 1$ and that $(p,q)$ is 
a weakly non-degenerate transition of $(A,B,C)$. 
Let $\delta >0$ be such that $b_{pq}+ \delta c_{pq}\neq 0$.
Then, for every $\varepsilon>0 $ there exist $T_\varepsilon>0$ and a piecewise constant function 
$u_{\varepsilon}:
[0,T_{\varepsilon}]\rightarrow \{0,\delta\}$  such that
$$
\|u_{\varepsilon}\|_{L^1}\leq \frac{\pi}{|b_{pq}+ \delta c_{pq}|} \mbox{ and } 
 \| 
\Upsilon^{u_{\varepsilon},(A,B,C)}_{T_{\varepsilon},0}
\phi_p-\phi_q\|_r<\varepsilon,
 $$
 for every $r<k/2$.
\end{thm}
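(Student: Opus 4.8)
The plan is to prove Theorem~\ref{THE_main_12} by first establishing the result for a single Galerkin approximation and then transferring it to the infinite-dimensional system via the regularity estimates guaranteed by Assumption~\ref{ASS_1}. The key observation is that, after fixing $\delta$ with $b_{pq}+\delta c_{pq}\neq 0$, the transition $(p,q)$ becomes a non-resonant transition of the single-input bilinear system $\dot\psi=(A+v(A+vB+v^2C))\psi$ with $v\in\{0,\delta\}$; more precisely the relevant coupling coefficient for the effective control operator $B+\delta C$ at the pair $(p,q)$ is exactly $b_{pq}+\delta c_{pq}$. The idea is to use a slowly oscillating control of the form $u_\varepsilon(t)=\delta\,\chi(t)$ where $\chi$ is a periodic switching function whose mean frequency is tuned to the Bohr frequency $|\lambda_p-\lambda_q|$, so that in the interaction picture the averaged dynamics reduces, on the two-dimensional subspace $\mathrm{span}\{\phi_p,\phi_q\}$, to a rotation driven solely by the resonant coefficient $b_{pq}+\delta c_{pq}$.

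First I would pass to the interaction representation by conjugating~(\ref{EQ_main}) by $e^{-tA}$, which turns the $(p,q)$ matrix entry of the effective coupling into a rotating term $e^{\mathrm{i}(\lambda_p-\lambda_q)t}(b_{pq}+\delta c_{pq})$ while all other off-diagonal entries rotate at frequencies that, by non-degeneracy, are bounded away from $|\lambda_p-\lambda_q|$. Choosing a piecewise-constant $u_\varepsilon$ that approximates a resonant excitation at frequency $|\lambda_p-\lambda_q|$, an averaging/Riemann--Lebesgue argument shows that, up to an error that can be made arbitrarily small, the evolution in the $\{\phi_p,\phi_q\}$ plane is a rotation by an angle proportional to $|b_{pq}+\delta c_{pq}|$ times $\|u_\varepsilon\|_{L^1}$. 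To achieve the exact population transfer $\phi_p\mapsto\phi_q$ (a rotation by $\pi$) one needs $|b_{pq}+\delta c_{pq}|\,\|u_\varepsilon\|_{L^1}=\pi$, which yields precisely the claimed bound $\|u_\varepsilon\|_{L^1}\le \pi/|b_{pq}+\delta c_{pq}|$. This is the standard ``pulse area'' computation familiar from two-level Rabi oscillations, and the non-resonance of $(p,q)$ guarantees that the other states are not appreciably coupled during the transfer.

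Next I would upgrade the convergence from the $L^2$ (energy) norm to the $r$-norm for every $r<k/2$. Here I would invoke part~(\ref{ASS_BC_Ak_borne}) of Assumption~\ref{ASS_1} together with the coupling-constant estimate: these provide an a~priori bound on the growth of $\||A|^k\psi(t)\|$ along the flow, controlled by $c_{(A,B,C,k)}\|u\|_{L^1}$ via a Gr\"onwall argument. Since $\|u_\varepsilon\|_{L^1}$ is uniformly bounded by $\pi/|b_{pq}+\delta c_{pq}|$, the trajectory stays in a bounded ball of $D(|A|^k)$. Combining a uniform $k$-norm bound with the $L^2$-smallness of $\Upsilon^{u_\varepsilon}_{T_\varepsilon,0}\phi_p-\phi_q$ and interpolating between the $0$-norm and the $k$-norm then yields smallness in every intermediate $r$-norm with $r<k/2$, which is exactly the regularity asserted in the theorem.

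The main obstacle I anticipate is the simultaneous control of \emph{two} competing approximations: the averaging error (which forces the control amplitude $\delta$ to be small or the total time $T_\varepsilon$ to be large so that the rotating-wave approximation is accurate) and the truncation error coming from neglecting the infinitely many other eigenstates. The non-resonance hypothesis handles the finite-dimensional spurious couplings, but closing the estimate in the infinite-dimensional setting requires showing that the ``high-frequency tail'' of the spectrum contributes a uniformly small error; this is precisely where the boundedness in Assumption~\ref{ASS_1}(\ref{ASS_BC_Ak_borne}) and the propagation of higher Sobolev regularity (to be developed in Section~\ref{SEC_HIGH_NORM_WC}) must be used to guarantee that $\Upsilon^{u_\varepsilon}_{T_\varepsilon,0}$ depends continuously on $u_\varepsilon$ and that tail effects can be absorbed into $\varepsilon$. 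Making these two error bounds compatible—small amplitude for good averaging versus a fixed pulse area of $\pi$ for exact transfer—is the delicate quantitative heart of the argument.
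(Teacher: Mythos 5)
Your overall strategy coincides with the paper's: fix $\delta$ so that controls valued in $\{0,\delta\}$ turn the quadratic system into the bilinear one with effective operator $B+\delta C$ (since $uB+u^2C=u(B+\delta C)$ on $\{0,\delta\}$), drive the transition by a resonant excitation at the Bohr frequency so that the pulse-area computation gives $\|u_\varepsilon\|_{L^1}\leq \pi/|b_{pq}+\delta c_{pq}|$, and then upgrade from the $H$-norm to the $r$-norm by combining the a priori energy bound (Proposition~\ref{PRO_croissance_norme_A}, available because the $L^1$ and $L^2$ norms of the controls are uniformly bounded) with the interpolation Lemma~\ref{lem:interpolation}. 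However, the point you defer as ``the delicate quantitative heart'' --- reconciling the fixed amplitude $\delta$ with the small amplitude needed for the averaging to be accurate --- is a genuine gap as stated, and the paper does \emph{not} close it by balancing the two errors for a single fixed-amplitude periodic switching control. Instead it decouples them: Proposition~\ref{PRO_RWA} is applied to the smooth nonnegative control $u^\ast(t)=1+\sin(|\lambda_p-\lambda_q|t)$ rescaled as $u^\ast/n$, so the averaging error vanishes as $n\to\infty$ with no reference to $\delta$; only afterwards is the resulting propagator approximated by a $\{0,\delta\}$-valued control via the tracking Lemma~\ref{lem:tripoint}, whose finite-dimensional core (Lemma~\ref{LEM_tripoint_dim_finie}, built on the time reparameterization $\mathcal{P}$ and the recurrence Lemma~\ref{LEM_recurrence}) is precisely what guarantees that the bang-bang replacement preserves the $L^1$ norm and hence the pulse-area bound. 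Without this two-stage construction (or an equivalent substitute), your plan of directly using $u_\varepsilon=\delta\chi(t)$ with $\chi$ periodic does not yield a vanishing rotating-wave error, and the $L^1$ bound is not obviously preserved.

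Three smaller points. First, the hypothesis is only that $(p,q)$ is \emph{weakly} non-degenerate, not non-resonant: other pairs may share the gap $|\lambda_p-\lambda_q|$ or integer multiples of it, and these are suppressed not because their frequencies are ``bounded away'' from resonance but because the control is required to satisfy the Fourier conditions \eqref{eq:conn1}--\eqref{eq:connmag1} killing the harmonics $n>1$ in $\mathcal{N}$. Second, the averaging result only gives $|\langle\phi_p,\Upsilon\phi_q\rangle|\to 1$; to conclude $\|\Upsilon\phi_p-\phi_q\|\to 0$ one must append a free evolution $e^{t_nA}$ to correct the phase, which your sketch omits. Third, the a priori bound furnished by Proposition~\ref{PRO_croissance_norme_A} is on the $k/2$-norm $\||A|^{k/2}\psi\|$, not on $\||A|^k\psi\|$; the interpolation is therefore between the $0$-norm and the $k/2$-norm, which is exactly why the conclusion holds for $r<k/2$ rather than $r<k$.
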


\subsection{Content of our analysis}
The first part of this work, Section \ref{SEC_Finite_dim},  concerns  
the proof of some preliminary results in finite dimension. In Section~\ref{SEC_infinite}, we 
provide some consequences of Assumption \ref{ASS_1} in terms of energy estimates, 
definitions of solutions and finite dimensional approximations for the system 
(\ref{EQ_main}) (Section \ref{SEC_HIGH_NORM_WC}). Then, 
we use an infinite dimensional tracking result (Section \ref{SEC_infinite_tracking}) to 
prove Theorems \ref{THE_main}, \ref{THE_main_weak}, and \ref{THE_main_12} first in $H$-norm (Sections 
\ref{SEC_Infinite_proof} and \ref{SEC_RWA}),
and then in $r$-norm (Section \ref{SEC_HIGH_NORM}).  
The results of Section \ref{SEC_infinite} are illustrated with two examples. 
The first one deals with system (\ref{EQ_bilinear_Schrod}) involving bounded but irregular (possibly everywhere 
discontinuous) potentials on a 
compact manifold (Section \ref{SEC_bounded_potentials}) and the second one 
with a perturbation of the quantum harmonic oscillator 
involving unbounded potentials (Section \ref{SEC_pert_harm_oscil}).

\section{FINITE DIMENSIONAL PRELIMINARY RESULTS}\label{SEC_Finite_dim}

We  consider the finite dimensional control problem in $\mathcal{L}_N=\mathrm{span}(\phi_1,\ldots,\phi_N)$
\begin{equation}\label{EQ_Galerkin_N}
\dot x=(A^{(N)} +u(t) B^{(N)})x, \quad x \in \mathcal{L}_N.
\end{equation}
Since $B^{(N)}$ is bounded, for every  locally integrable $u$, we can define the solution  (in the sense of 
Carath\'eodory) $t\mapsto X^u_{(N)}(t,t_0)x_0$ of (\ref{EQ_Galerkin_N}) with initial condition $x_0$  in $\mathcal{L}_N$, at time $t_0$.

\subsection{Time reparameterization}

Our results in the following deal with controls in $L^1(\mathbf{R},\mathbf{R}) 
\cap L^2(\mathbf{R},\mathbf{R})$. We will  prove 
these results for piecewise constant control laws, and then extend by density the results to 
general (not necessarily piecewise constant) controls. To this end, we introduce the
 sets $PC$  of  piecewise constant functions $u$ such that there exists 
two sequences 
$0=t_1 <t_2 <\ldots < t_{p+1}$ and $u_1,u_2,\ldots,u_p\neq 0$ with
$$
u=\sum_{j=1}^p u_j \mathbf{1}_{[t_j,t_{j+1})}.
$$
Set $\tau_j=t_{j+1}-t_j$, we identify a function $u$ in $PC$ with the pair $(u_j,\tau_j)_{1\leq j\leq p}$. 

We define similarly $PC^+$ as the set of functions of $PC$ that do not assume negative value:
$$u=\sum_{j=1}^p u_j \mathbf{1}_{[t_j,t_{j+1})} \in PC^+ \Leftrightarrow u_j>0 \quad  \forall j\leq p. $$

We define the mapping $\mathcal{P}:PC^+\rightarrow PC^+$ by
$$
\mathcal{P}\left ((u_j,\tau_j)_{1\leq j \leq p}\right )=\left (\frac{1}{u_j}, u_j \tau_j \right )_{1\leq j \leq p}
$$
for every $u=(u_j,\tau_j)_{1\leq j \leq p}$ in $PC^+$. 

For every $u\in PC$, let 
$P^u$ be the cumulative function of
 $\mathcal{P}|u|$  vanishing at $0$, that is $P^u(t) =
\int_{0}^{t}\mathcal{P}|u|(s)\mathrm{d}s$. By construction, $\int_0^{P^u(t)} 
|u(s)|\mathrm{d}s=t$ for every $t$ in $[0,\|u\|_{L^1}]$.

The mapping $\mathcal{P}$ is a reparameterization of the time with the $L^1$ 
norm of the control.
Indeed, let  $\widehat{X}^u_{(N)}(t,s)$ be the propagator of 
$
\dot x=\mathcal{P}|u|A^{(N)}x + \mathrm{sign}(u\circ P^u) B^{(N)}x,
$
we have the following result.
\begin{lemma}
 For every $u$ in $PC$, 
\begin{equation}\label{eq:0101}
\displaystyle{\widehat{X}^{u}_{(N)}\left (\int_0^T |u(\tau)| \mathrm{d}\tau,0 \right )=X^u_{(N)}(T,0)}.
\end{equation}
\end{lemma}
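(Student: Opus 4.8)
The plan is to realize the reparameterization $\mathcal{P}$ literally as a change of the time variable in the solutions of (\ref{EQ_Galerkin_N}), so that a trajectory of the original system, read on the new clock $P^u$, becomes a trajectory of the system defining $\widehat X^u_{(N)}$. Fix $x_0$ in $\mathcal{L}_N$, set $x(t)=X^u_{(N)}(t,0)x_0$, and consider the time-changed curve $y(t):=x(P^u(t))$. Since $P^u(0)=0$ we have $y(0)=x_0$, and the whole lemma will follow once I show that $y$ solves $\dot y=\mathcal{P}|u|\,A^{(N)}y+\mathrm{sign}(u\circ P^u)\,B^{(N)}y$, for then $y(t)=\widehat X^u_{(N)}(t,0)x_0$ and it remains only to evaluate at the correct endpoint.

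The computation is a chain rule: differentiating, $\dot y(t)=(P^u)'(t)\,\dot x(P^u(t))=\mathcal{P}|u|(t)\,(A^{(N)}+u(P^u(t))B^{(N)})\,y(t)$, using $(P^u)'=\mathcal{P}|u|$ and the equation (\ref{EQ_Galerkin_N}) satisfied by $x$. The point where the definition of $\mathcal{P}$ pays off is the identity $\mathcal{P}|u|(t)\,u(P^u(t))=\mathrm{sign}(u(P^u(t)))$. To get it I would differentiate the relation $\int_0^{P^u(t)}|u(s)|\,\mathrm{d}s=t$ recorded above, which gives $|u(P^u(t))|\,(P^u)'(t)=1$, hence $\mathcal{P}|u|(t)=1/|u(P^u(t))|$; multiplying by $u(P^u(t))$ turns the $B^{(N)}$ coefficient into the pure sign $\mathrm{sign}(u(P^u(t)))$, exactly the term appearing in the system for $\widehat X^u_{(N)}$. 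Thus $y$ is the asserted trajectory.

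Finally I would match endpoints. Evaluating $\int_0^{P^u(t)}|u|=t$ at $t=\int_0^T|u(\tau)|\,\mathrm{d}\tau$ and using that $P^u$ is increasing shows $P^u\!\left(\int_0^T|u(\tau)|\,\mathrm{d}\tau\right)=T$, so $y\!\left(\int_0^T|u(\tau)|\,\mathrm{d}\tau\right)=x(T)=X^u_{(N)}(T,0)x_0$; since $y(t)=\widehat X^u_{(N)}(t,0)x_0$ and $x_0$ is arbitrary, this is precisely (\ref{eq:0101}). The only delicate point — and the main obstacle — is regularity: $u$ jumps and $P^u$ is merely piecewise affine, so $(P^u)'=\mathcal{P}|u|$ and the identity $\mathcal{P}|u|=1/|u\circ P^u|$ hold only almost everywhere, and the chain rule must be read in the Carath\'eodory sense. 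Because $u\in PC$ never vanishes and all data are piecewise constant, this causes no real trouble: one may equivalently verify the claim on each interval where $u\equiv u_j\neq0$ — where the original propagator is $e^{\tau_j(A^{(N)}+u_jB^{(N)})}$ and the reparameterized one is $e^{|u_j|\tau_j((1/|u_j|)A^{(N)}+\mathrm{sign}(u_j)B^{(N)})}=e^{\tau_j(A^{(N)}+u_jB^{(N)})}$ — and then concatenate in the correct order.
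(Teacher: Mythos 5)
Your proof is correct and, in substance, the same as the paper's: the paper's entire proof is the single identity $\exp\bigl(t(A^{(N)}+\alpha B^{(N)})\bigr)=\exp\bigl(t|\alpha|\bigl(\tfrac{1}{|\alpha|}A^{(N)}+\mathrm{sign}(\alpha)B^{(N)}\bigr)\bigr)$ applied on each interval where $u\equiv u_j\neq 0$ and then concatenated, which is exactly the observation in your closing paragraph. The chain-rule/time-change derivation in your first two paragraphs is a valid, slightly more general packaging of the same computation, and your reduction of the a.e.\ regularity issue to the interval-by-interval exponential identity is precisely how the paper handles it.
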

\begin{proof}
For every constant $\alpha \in \mathbf{R}\setminus \{0\}$,
\begin{eqnarray*}
\lefteqn{\exp(t(A^{(N)}+\alpha B^{(N)}))=}\\
 &\quad \quad \quad &\exp \left (t|\alpha| \left(\frac{1}{|\alpha|} 
A^{(N)}+\mathrm{sign}(\alpha)B^{(N)}\right) \right ).\qedhere
\end{eqnarray*} 
\end{proof}

\subsection{A tracking result}

Lemma \ref{LEM_recurrence} below is an easy consequence of the celebrated Poincar\'e recurrence theorem, see for 
instance \cite{PhysRev.107.337}. Due to the central role it plays in our 
analysis, we present below an elementary proof.
\begin{lemma}\label{LEM_recurrence}
Let $N$ be an integer and $(\lambda_{1},\ldots,\lambda_{N})$ a sequence of $N$ real numbers. For every $\varepsilon>0$, there 
exists an increasing sequence $(v_n)_{n \in \mathbf{N}}$, such that $\lim_{n\to \infty}v_{n}=+\infty$ and 
$|e^{\mathrm{i}\lambda_j v_n}-1|<\varepsilon$,   
 for every $n$ in $\mathbf{N}
$, for every $j\leq N$.
\end{lemma}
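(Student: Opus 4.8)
The plan is to reduce the statement to a simultaneous Diophantine approximation fact and to prove the latter by a pigeonhole argument on a torus. First I would fix $\varepsilon>0$ (which we may assume $\leq\pi$) and observe that, since $|e^{\mathrm{i}\theta}-1|=2|\sin(\theta/2)|\leq |\theta|$ whenever $\theta$ is taken in $(-\pi,\pi]$, it suffices to produce an increasing sequence $v_n\to+\infty$ such that the distance from $\lambda_j v_n$ to $2\pi\mathbf{Z}$ is smaller than $\varepsilon$ for every $j\leq N$. Equivalently, writing $f(t)=(\lambda_1 t,\ldots,\lambda_N t)$ modulo $2\pi$ in the torus $(\mathbf{R}/2\pi\mathbf{Z})^N$, I need the forward orbit $\{f(t):t>0\}$ to return, at times tending to infinity, into an arbitrarily small neighbourhood of the origin.

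Second, for a fixed $\varepsilon$ I would establish the existence of at least one (a priori bounded) return time by pigeonhole. Choose an integer $L$ with $2\pi/L<\varepsilon$ and partition the torus into the $L^N$ half-open cubes of side $2\pi/L$. Among the $L^N+1$ points $f(0),f(1),\ldots,f(L^N)$ two must lie in the same cube, say $f(a)$ and $f(b)$ with $0\leq a<b\leq L^N$; then the integer $w=b-a\in\{1,\ldots,L^N\}$ satisfies $\mathrm{dist}(\lambda_j w,2\pi\mathbf{Z})<2\pi/L<\varepsilon$ for every $j\leq N$, hence $|e^{\mathrm{i}\lambda_j w}-1|<\varepsilon$.

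Third, and this is the step that requires care, I would upgrade a single return time into an unbounded sequence, since the pigeonhole argument only delivers bounded return times whereas the conclusion demands times diverging to infinity. Applying the previous step to a sequence $\varepsilon_n\downarrow 0$ produces positive integers $w_n$ with $\mathrm{dist}(\lambda_j w_n,2\pi\mathbf{Z})<\varepsilon_n$ for all $j\leq N$. If $(w_n)_n$ is unbounded, an increasing subsequence already tends to $+\infty$ and meets the requirement $|e^{\mathrm{i}\lambda_j w_n}-1|<\varepsilon_n\leq\varepsilon$. If instead $(w_n)_n$ is bounded, some value $w^\ast$ is attained for infinitely many $n$ with $\varepsilon_n\to 0$, which forces $\mathrm{dist}(\lambda_j w^\ast,2\pi\mathbf{Z})=0$, i.e.\ $\lambda_j w^\ast\in 2\pi\mathbf{Z}$, for every $j\leq N$; then $v_n=n\,w^\ast$ yields $e^{\mathrm{i}\lambda_j v_n}=1$ exactly and is an increasing sequence diverging to $+\infty$. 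In either case one extracts an increasing sequence $(v_n)$ with $v_n\to+\infty$ and $|e^{\mathrm{i}\lambda_j v_n}-1|<\varepsilon$ for all $n$ and all $j\leq N$, which is the claim.
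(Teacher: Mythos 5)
Your proof is correct, but it takes a genuinely different route from the paper's. The paper's argument is of Poincar\'e-recurrence type: it uses compactness of the torus $\mathbf{T}^N$ to extract an accumulation point $U_\infty$ of the orbit $U_n=(e^{\mathrm{i}\lambda_j n})_{j\leq N}$, then inductively picks return times $w_n$ into the $\varepsilon/2$-neighbourhood of $U_\infty$ with gaps forced to grow, and sets $v_n=w_{n+1}-w_n$; the triangle inequality $|e^{\mathrm{i}\lambda_j v_n}-1|\leq d(U_{w_{n+1}},U_\infty)+d(U_{w_n},U_\infty)$ gives the estimate, while $v_n\geq n$ gives divergence, both in one stroke. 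Your argument is instead the classical Dirichlet pigeonhole on a decomposition of $(\mathbf{R}/2\pi\mathbf{Z})^N$ into $L^N$ cubes, which buys something the paper's proof does not: an explicit bound $w\leq L^N$ with $L\sim 2\pi/\varepsilon$ on the first return time. The price is that pigeonhole alone only yields bounded return times, and you correctly identify and close this gap with the dichotomy (either the $w_n$ obtained from $\varepsilon_n\downarrow 0$ are unbounded, or some $w^\ast$ recurs and then $\lambda_j w^\ast\in 2\pi\mathbf{Z}$ exactly, so the multiples $nw^\ast$ work). The reduction $|e^{\mathrm{i}\theta}-1|\leq\mathrm{dist}(\theta,2\pi\mathbf{Z})$ and both branches of the dichotomy are handled correctly, so the proof is complete.
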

%
%
%
\begin{proof}
Consider the distance on the $N$-dimensional torus $\mathbf{T}^N$ defined by
$$
\begin{array}{lccl}
d:\!\!\!\!\!\!\!\!\!\!\!\!\!\!\!\!\!\!\!&\mathbf{T}^N \times \mathbf{T}^N & \!\!\!\to \!\!\!& \mathbf{R}\\
& \big ( (e^{\mathrm{i}g_j})_{1\leq j \leq N}, 
(e^{\mathrm{i}h_j})_{1\leq j \leq N}\big )& \!\!\!\mapsto\!\!\! & \sup_{1\leq j \leq N} |
e^{\mathrm{i} g_j}-e^{\mathrm{i} h_j}|.
\end{array}
$$
The torus $\mathbf{T}^N$ endowed with the distance $d$ is compact. Hence the sequence 
$(U_n)_{n\in \mathbf{N}}:=\big ((e^{\mathrm{i}\lambda_j n})_{j\leq N}\big )_{n\in \mathbf{N}}$ 
accumulates (at least) in one point that we denote
 $U_{\infty}:=(e^{\mathrm{i}\theta_j})_{1\leq j\leq N}\in \mathbf{T}^N$. 
 We construct a sequence $(w_n)_{n\in \mathbf{N}}$ of integers by 
induction, let $w_1$ be the smallest positive integer $n$ such that $d(U_n,U_\infty)<
\varepsilon/2$. Assuming  
$w_n$  known, we chose $w_{n+1}$  as the smallest positive integer $n$ larger that $w_n+
(w_n-w_{n-1})$ such that $d(U_{w_{n+1}},U_\infty)< \varepsilon/2$.

Finally, we define $v_n=w_{n+1}-w_n$. By construction, for every $n$, $v_n\geq n$ and
\begin{eqnarray*} 
 |e^{\mathrm{i}\lambda_j v_n}-1| & \leq & |e^{\mathrm{i}\lambda_j (w_{n+1}-w_n)}-1|\\
 & \leq & |e^{\mathrm{i}\lambda_j w_{n+1}}-e^{\mathrm{i}\lambda_j w_{n}}|\\
 & \leq & d(U_{n+1},U_n)\\
 & \leq & d(U_{n+1},U_\infty) + d(U_n,U_\infty)\leq  \varepsilon 
 \end{eqnarray*} 
 for every $1\leq j\leq N$.
\end{proof}

\begin{lemma}\label{LEM_tripoint_dim_finie}
 For every  $a, b \in \mathbf{R}$, $a<0<b$, for every $T>0$, for every integrable
{ function} $u^{\ast}:\mathbf{R}\to \mathbf{R}$, there exists a
sequence $(u_n)_{n\in \mathbf{N}}$ of piecewise constant functions
 $u_{n}: [0,T_n] \to \{a,0,b\}$
 such that $X^{u_n}_{(N)}(T_n,0)$ tends to  $X^{u^{\ast}}_{(N)}(T,0)$ as $n$
tends to infinity and $\|u_n\|_{L^1}=\|u^{\ast}\|_{L^1}$. If, 
moreover,
$u^{\ast}$ is non-negative, the sequence $(u_n)_{n\in \mathbf{N}}$ can be chosen
such that $u_n$ takes value in $\{0,b\}$ for every $n$.
\end{lemma}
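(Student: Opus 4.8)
The plan is to reduce the statement to the realization of a single constant control value, and then to synthesize that value from the admissible amplitudes $\{a,0,b\}$ by combining fast switching with the almost-recurrence of the free evolution supplied by Lemma~\ref{LEM_recurrence}.

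First I would reduce to piecewise constant controls. Since $B^{(N)}$ is a bounded operator on the finite-dimensional space $\mathcal{L}_N$, a standard Gronwall estimate shows that $u\mapsto X^u_{(N)}(T,0)$ is continuous for the $L^1$ norm, so that $X^{u^\ast}_{(N)}(T,0)$ is a limit of $X^{\tilde u}_{(N)}(T,0)$ along piecewise constant $\tilde u$. I would take $\tilde u$ to be conditional expectations of $u^\ast$ on a refining partition: these are piecewise constant, converge to $u^\ast$ in $L^1$, preserve sign, and contract the $L^1$ norm. When $u^\ast\geq 0$ they preserve the integral, hence $\|\tilde u\|_{L^1}=\|u^\ast\|_{L^1}$ exactly; in the signed case $\|\tilde u\|_{L^1}\leq\|u^\ast\|_{L^1}$, with equality only in the limit, and I reserve the deficit $\Delta=\|u^\ast\|_{L^1}-\|\tilde u\|_{L^1}\geq 0$ for a correction at the end. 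By concatenation of propagators it then suffices to realize, for a single constant value $c$ on an interval of length $\tau$, the propagator $e^{\tau(A^{(N)}+cB^{(N)})}$ with controls valued in $\{a,0,b\}$ (resp. $\{0,b\}$ when $c\geq 0$) and $L^1$ budget exactly $|c|\tau$.

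For the single piece take $c>0$ (the case $c<0$ is symmetric with $a$ in place of $b$, and $c=0$ is free evolution). The target is $e^{\tau A^{(N)}+c\tau B^{(N)}}=\lim_{m\to\infty}\big(e^{(\tau/m)A^{(N)}}e^{(c\tau/m)B^{(N)}}\big)^m$ by the Lie product formula, and I realize each Trotter factor as a block. Setting $d=c\tau/(mb)$, a control $b$ held for time $d$ gives $e^{d(A^{(N)}+bB^{(N)})}= e^{dA^{(N)}}e^{(c\tau/m)B^{(N)}}$ up to $O(1/m^2)$. Using Lemma~\ref{LEM_recurrence} I pick a large almost-recurrence time $V\geq d$ with $\|e^{VA^{(N)}}-\mathrm{Id}\|$ as small as desired, so that a free evolution of length $\tau/m+V-d\geq 0$ realizes $e^{(\tau/m)A^{(N)}}e^{-dA^{(N)}}$ up to the recurrence error. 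Concatenating this free evolution with the control-$b$ burst yields a block equal to $e^{(\tau/m)A^{(N)}}e^{(c\tau/m)B^{(N)}}$ up to $O(1/m^2)$ plus that error; repeating the block $m$ times with recurrence quality chosen $o(1/m)$ sends the total error to $0$. Here the control $b$ is used for total time $md=c\tau/b$, so the block sequence has $L^1$ budget exactly $b\cdot c\tau/b=c\tau=|c|\tau$.

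The main obstacle is exactly the amplitude mismatch $|c|>b$: the admissible vector fields $A^{(N)}$, $A^{(N)}+bB^{(N)}$, $A^{(N)}+aB^{(N)}$ generate by fast switching only effective amplitudes in $[a,b]$, and forcing a larger $|c|$ makes the control-$b$ bursts over-rotate the $A^{(N)}$ part, which cannot be \emph{subtracted} directly. The resolution, and the reason Lemma~\ref{LEM_recurrence} is invoked, is that one instead \emph{adds} an almost full-period rotation $e^{VA^{(N)}}\approx\mathrm{Id}$ to convert the required $e^{-dA^{(N)}}$ into an admissible free evolution $e^{(V-d)A^{(N)}}$; this is also the mechanism behind the time-reparameterization lemma, which trades control amplitude for the coefficient of $A^{(N)}$. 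Finally, in the signed case I absorb the reserved deficit $\Delta$ by appending two short bursts of opposite sign, of duration $\Delta/(2b)$ at value $b$ and $\Delta/(2|a|)$ at value $a$: their $B^{(N)}$-effects cancel to first order while contributing exactly $\Delta$ to the $L^1$ norm, with residual propagator error $O(\Delta)\to 0$. A diagonal choice of the refinement $m$, the recurrence quality, and the partition then produces a single sequence $(u_n)$ with $\|u_n\|_{L^1}=\|u^\ast\|_{L^1}$ and $X^{u_n}_{(N)}(T_n,0)\to X^{u^\ast}_{(N)}(T,0)$, valued in $\{0,b\}$ whenever $u^\ast\geq 0$.
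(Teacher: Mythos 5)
Your proof is correct, but it takes a genuinely different route from the paper's. The paper does not localize to Trotter blocks: it reparameterizes time by the running $L^1$ norm of the control (the map $\mathcal{P}$ of Section~\ref{SEC_Finite_dim}), passes to the interaction picture $\dot y = M_{u}y$ with entries $\mathrm{sign}(u\circ v)\,b_{jk}e^{\mathrm{i}(\lambda_j-\lambda_k)v(t)}$, and then constructs a piecewise affine surrogate $v_l^{\eta}$ of the clock $v^{\ast}$ whose slopes are forced to be $1/a$ or $1/b$ and whose values are constrained to the recurrence sets $E_{\eta}(v_{l,j})$; Lemma~\ref{LEM_recurrence} enters only to show that these sets are unbounded in both directions, so the constrained slopes are always realizable. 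Convergence then follows in one stroke from the uniform convergence of $\int_0^t M_{u_n}$ to $\int_0^t M_{u^{\ast}}$ and the continuity of the input--output map in that weak sense, and the identity $\|u_n\|_{L^1}=\|u^{\ast}\|_{L^1}$ is automatic because the reparameterized time axis \emph{is} the $L^1$ norm. Your argument invokes Lemma~\ref{LEM_recurrence} for exactly the same purpose --- converting the forbidden backward drift $e^{-dA^{(N)}}$ into an admissible forward evolution of length $V-d$ with $e^{VA^{(N)}}\approx \mathrm{Id}$ --- but assembles the propagator block by block via the Lie product formula; this is more elementary (Gronwall plus Trotter) and makes the $L^1$ accounting transparent on each constant piece, at the price of the explicit patch (the two cancelling bursts) needed to upgrade $\|u_n\|_{L^1}\leq\|u^{\ast}\|_{L^1}$ to an equality in the signed case, and of a three-parameter diagonal extraction (partition, Trotter step, recurrence quality) where the paper needs only two. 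Both proofs are valid.
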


\begin{remark}
The approximation result in Lemma~\ref{LEM_tripoint_dim_finie} is classical and
 can be obtained, for instance, with Lie groups techniques, see \cite{such}. 
The novelty of Lemma~\ref{LEM_tripoint_dim_finie} is that 
the approaching sequence $(u_n)_n$ is bounded in $L^1(\mathbf{R},\mathbf{R})$. 
This point is crucial for the derivation of the infinite dimensional results in  
Section~\ref{SEC_infinite} below.
\end{remark}

\begin{proof}[Proof of Lemma~\ref{LEM_tripoint_dim_finie}]
To simplify the notation for every $u\in PC$, define the time-varying $N\times N$ matrix
$t\mapsto M_u(t)$ the entry $(j,k)$ of which is given 
by
$$
m_{jk}:t\mapsto\mathrm{sign}(u \circ v )(t) b_{jk} e^{ \mathrm{i}(\lambda_{j} - \lambda_{k}) v(t)},
$$
where 
$v$ is the cumulative function of $\mathcal{P}|u|$
vanishing at $0$, that is $v(t) =
\int_{0}^{t}\mathcal{P}|u|(s)\mathrm{d}s$. Notice that 
$u\circ v$ is defined everywhere on 
$\left \lbrack 0,\|u\|_{L^1} \right \rbrack$. 

By density (for the $L^1$ norm) of the set $PC$ in 
$L^{1}(\mathbf{R})$ functions, 
one may assume without loss of generality that $u^\ast$ is piecewise constant not vanishing 
in $[0,T]$.  Let $v^{\ast}(t) = \int_{0}^{t}\mathcal{P}|u^{\ast}|(s)\mathrm{d}s$. By construction, $\int_0^{v^{\ast}(t)} |u^\ast(s)|\mathrm{d}s=t$ for every $t$ in $[0,\|u^\ast\|_{L^1}]$.
The solution $y^\ast$ of $\dot y=M_{u^{*}}y$ with initial condition $y(0) =I_{N}$
satisfies, by~\eqref{eq:0101}, the following relation
\begin{eqnarray}
e^{v^{\ast}(t)A^{(N)}}\!\!\!y^\ast(t)&=&\widehat{X}_{(N)}^{u^\ast} (t,0) \nonumber \\
&=&\!X^{u^\ast}_{(N)}(v^\ast(t),0) \label{EQ_reparam_decomp_phase_module}
\end{eqnarray}
for every $t$ in $\left \lbrack 0,\|u^\ast\|_{L^1} \right \rbrack$.


Consider, for every $\eta >0$ and $r\in \mathbf{R}$ the set
\begin{eqnarray*}
E_{\eta}(r)= \{v\in \mathbf{R}\mid |e^{\mathrm{i}\lambda_{j} r} - e^{\mathrm{i}\lambda_{j} v}| <\eta\\
 \mbox{ for every } 1 \leq j\leq N \}.
\end{eqnarray*}
 For every $r \in \mathbf{R}$, $E_{\eta}(r)$ is open and
 nonempty. Note that 
 $$
 |e^{\mathrm{i}\lambda_{j} r} - e^{\mathrm{i}\lambda_{j} v}|=2\left | \sin \left ( \frac{|\lambda_j||r-v|}{2}\right )\right |,
$$ 
thus each connected component of $E_{\eta}(r)$ has measure at least 
$$
\frac{\eta}{\sup_{1\leq j,k\leq N}|\lambda_j|}.
$$
Moreover, by Lemma~\ref{LEM_recurrence}, there exists an increasing  sequence $(v_n)_{n\in \mathbf{N}}$ of integers tending to $+\infty$, such that,
for $1\leq j,k \leq N$, $|e^{\mathrm{i}\lambda_{j}v_n} - 1| <\eta$ or, equivalently, 
$|e^{\mathrm{i}\lambda_{j} (r+v_n)} -e^{\mathrm{i}\lambda_{j} r} | <\eta$. 
 Hence, for every $n$ in $\mathbf{N}$, $r+v_n$ belongs to $E_{\eta}(r)$, which is not bounded from above. The same 
 argument shows that  $E_{\eta}(r)$ contains also $r-v_n$ and that it is not bounded from below.

For every $l>0$, let $v^{\ast}_l = \sum_{j=1}^{p_l} v_{l,j}
\chi_{[t_{l,j},t_{l,j+1})}$ be a piecewise constant  approximant of $v^{\ast}$ such that
$\|v^{\ast}_l  - v^{\ast}\|_{\infty} \leq l$ on $[0,\|u^{\ast}\|_{L^1}]$ and
such that the sign of $u^{\ast} \circ v^{\ast}_l$ is
constant on every interval $[t_{l,j},t_{l,j+1})$.  For every $\eta>0$, there
exists a (possibly discontinuous) piecewise affine function $v_{l}^{\eta}$
defined on every interval $[t_{l,j},t_{l,j+1})$ by
$$
 \dot v_{l}^{\eta} = 
\left\{
\begin{array}{ll}
1/b&  \mbox{ if }  u^{\ast} (v_{l,j})>0, \\
1/a &  \mbox{ if } u^{\ast} (v_{l,j})<0, \\
\end{array}
\right.
$$
and
$$
v_{l}^{\eta}(t) \in E_{\eta}(v_{l,j}) \quad  \mbox{ for } t \in [t_{l,j},t_{l,j+1}).
$$
Thus  $v_{l}^{\eta}$ is increasing (respectively decreasing) on $(t_{l,j},t_{l,j+1})$  if $ u^{\ast} (v_{l,j})>0$ (respectively $u^{\ast} (v_{l,j})<0$), see Figure \ref{fig:constructionv}.
\begin{figure}[tb!]
 \centering
 \includegraphics[width=8cm,height=4cm]{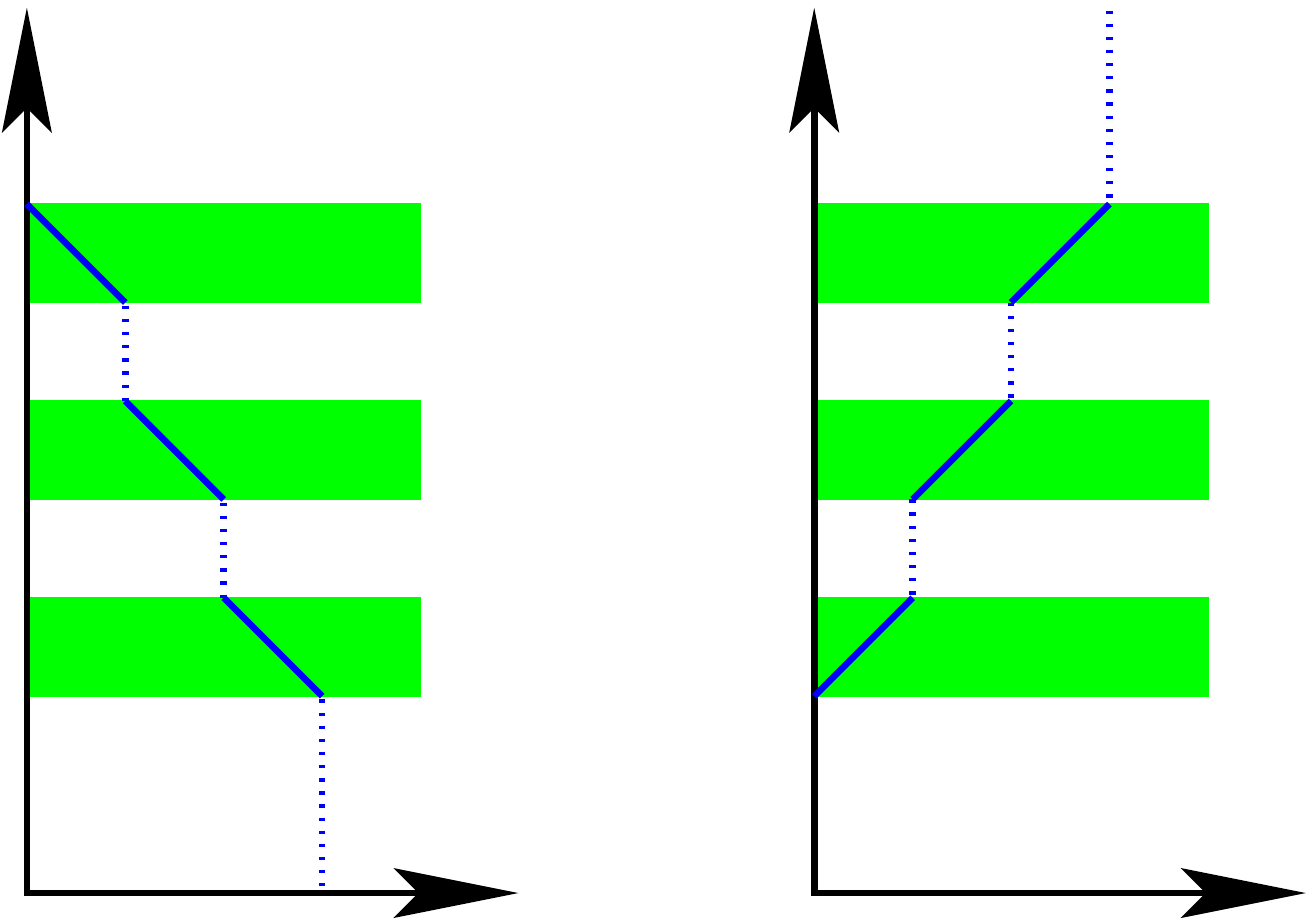}
   \caption{Construction of the function $v_{l}^\eta$, when
$u^{\ast}(v_{l,j})<0$ (left) and $u^{\ast}(v_{l,j})>0$ (right).  The set
$E_{\eta}(v_{l,j})$ is coloured. The piecewise affine function $v_{l}^{\eta}$ is
discontinuous, with derivative equal to $1/a<0$ (left) or $1/b>0$ (right).
Notice that $v_{l}^{\eta}$ is injective in both cases.  The derivative 
$u_l^\eta$ of the  reciprocal function  of $v_{l}^{\eta}$ is piecewise affine
and takes value in $\{a,0,b\}$.}
 \label{fig:constructionv}
\end{figure}

By construction, the function $v_{l}^{\eta}$ is one-to-one on 
$(t_{l,j},t_{l,j+1})$. 
Its inverse on $(t_{l,j},t_{l,j+1})$, say $w_{l}^{\eta}$,  is a piecewise
affine function. 
The derivative $u_{l}^{\eta}$ of the continuous piecewise linear function  $w_{l}^{\eta}$  is a piecewise constant
function taking value in $\{a,0,b\}$. 

Moreover, by construction $\|u_{l}^{\eta}\|_{L^1} = 
\|u^{\ast}\|_{L^1}$.

For every $n$ in $\mathbf{N}$, let $ u_{n} = u_{l}^{\eta}$ with $l=\eta = 1/n$, let
$ v_n$ be the (possibly discontinuous) 
inverse function of $t\mapsto \int_0^t | u_n(s)|\mathrm{d}s$, and $ y_n$ the associated solution 
of 
$\dot y = M_{ u_{n}} y$ with initial condition $y(0)=I_N$.

For every $t$, $\int_0^t M_{ u_n}(\tau) \mathrm{d}\tau$ tends 
to $\int_0^t M_{u^{\ast}}(\tau) \mathrm{d}\tau$ 
as $n$ tends to infinity, uniformly on  $\left \lbrack 0,\|u^\ast\|_{L^1} \right \rbrack$. 
By \cite[Lemma 8.2]{book}, the associated solution $ y_{n}$ tends uniformly 
on $[0,\|u^\ast\|_{L^1}]$ to $y^{\ast}$. In particular, $ y_{n}(\|u^\ast\|_{L^1})$ converge toward 
$y^{\ast}(\|u^\ast\|_{L^1})$ as $n$ tends to infinity.

From (\ref{EQ_reparam_decomp_phase_module}), we have that for every $t$ in $[0,\|u^\ast\|_{L^1}]$,
\begin{eqnarray}
\lefteqn{\|X^{u^\ast}_{(N)}(v^\ast(t),0)-X^{u_n}_{(N)}(v_n(t),0)\|} \nonumber\\
&\leq &\|e^{v^{\ast}(t)A^{(N)}}y^\ast(t)-e^{v_n(t)A^{(N)}}y_n(t) \| \nonumber\\
&\leq &\|y^\ast(t)-y_n(t) \|+ \left \|e^{v^{\ast}(t)A^{(N)}}-e^{v_n(t)A^{(N)}} 
\right\|.
\label{EQ_major_diff_prop_dim_finie}
\end{eqnarray}
Taking $t=\|u^\ast\|_{L^1}$ in (\ref{EQ_major_diff_prop_dim_finie}) concludes 
the first part of the proof.

Finally, notice that if {$u^{\ast}\geq 0$}, then $u^{\ast}(v_{l,j})$
is always nonnegative, hence $v_{\eta}^{\eta}$ is increasing and $u_n$ takes only the values $0$ and $b$.
\end{proof}

\section{INFINITE DIMENSIONAL SYSTEMS}\label{SEC_infinite}

\subsection{Energy estimates for weakly-coupled quantum systems}\label{SEC_HIGH_NORM_WC}

If $(A,B,C,k)$ satisfies Assumption \ref{ASS_1}, $(A,B,C)$ is  \emph{$k$-weakly-coupled}.  We present here some properties of these
systems and refer to \cite{weakly-coupled} for further details.

The notion of weakly-coupled systems is closely related to the growth of the
 $k/2$-norm $\|\psi\|_{k/2}=\langle |A|^k \psi, \psi \rangle$. For $k=1$, this quantity is the expected 
 value of the energy of the system.
 Next result is a direct application of~\cite[Proposition~2]{weakly-coupled}
\begin{proposition}\label{PRO_croissance_norme_A} Let  $(A,B,C,k)$ satisfy Assumption 
\ref{ASS_1}.  Then,
for every $\psi_{0} \in D(|A|^{k/2})$, $K>0$,
$T\geq 0$, and $u$ piecewise constant such that
$\|u\|_{L^1}+\|u\|^2_{L^2}< K$, one has
\begin{equation}\label{eq:0505}
\left\|\Upsilon^{u}_{T}(\psi_{0})\right\|_{k/2} \leq e^{c(A,B,C,k) K} \| \psi_0 \|_{k/2}.
\end{equation}
\end{proposition}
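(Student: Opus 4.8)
The plan is to control the growth of the squared $k/2$-norm $t \mapsto \|\psi(t)\|_{k/2}^2 = \langle |A|^k \psi(t), \psi(t)\rangle$ along the trajectory $\psi(t) = \Upsilon^u_t(\psi_0)$ and to close a Grönwall-type differential inequality. Since $u$ is piecewise constant and the propagator is built by concatenation, it suffices to estimate the growth over a single interval $[t_j,t_{j+1})$ on which $u\equiv u_j$ is constant, and then to multiply the resulting factors. On such an interval $\psi$ solves $\dot\psi = (A + u_j B + u_j^2 C)\psi$, and formally
\[
\frac{d}{dt}\langle |A|^k \psi, \psi\rangle = 2\Re\langle |A|^k \psi, (A + u_j B + u_j^2 C)\psi\rangle .
\]

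The drift term vanishes: since $A$ commutes with $|A|^k$ (both are diagonal in the common eigenbasis $(\phi_j)_j$), the operator $|A|^k A$ is skew-adjoint, so $\langle |A|^k A\psi, \psi\rangle$ is purely imaginary, whence $\Re\langle |A|^k \psi, A\psi\rangle = \Re\langle |A|^k A\psi, \psi\rangle = 0$. For the remaining two terms I would invoke the very definition of the coupling constant $c_{(A,B,C,k)}$, namely $|\Re\langle |A|^k \psi, B\psi\rangle| \le c_{(A,B,C,k)}\,|\langle |A|^k \psi, \psi\rangle|$ together with the analogous bound for $C$. Since $\langle |A|^k \psi, \psi\rangle = \|\psi\|_{k/2}^2 \ge 0$, this yields
\[
\frac{d}{dt}\|\psi\|_{k/2}^2 \le 2 c_{(A,B,C,k)}\big(|u_j| + u_j^2\big)\|\psi\|_{k/2}^2 .
\]

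Grönwall's inequality then gives $\|\psi(t_{j+1})\|_{k/2}^2 \le \exp\!\big(2 c_{(A,B,C,k)}(|u_j|\tau_j + u_j^2\tau_j)\big)\|\psi(t_j)\|_{k/2}^2$ with $\tau_j = t_{j+1}-t_j$. As each propagator $e^{t(A+u_jB+u_j^2C)}$ preserves $D(|A|^{k/2})$ by Assumption~\ref{ASS_1}(3), the trajectory stays in the form domain and concatenation is legitimate; observing that $\sum_j |u_j|\tau_j = \|u\|_{L^1}$ and $\sum_j u_j^2\tau_j = \|u\|_{L^2}^2$, I obtain $\|\psi(T)\|_{k/2}^2 \le \exp\!\big(2 c_{(A,B,C,k)}(\|u\|_{L^1} + \|u\|_{L^2}^2)\big)\|\psi_0\|_{k/2}^2$. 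Taking square roots and using $c_{(A,B,C,k)} \ge 0$ with $\|u\|_{L^1} + \|u\|_{L^2}^2 < K$ produces exactly the claimed bound (\ref{eq:0505}).

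The main obstacle is the rigorous justification of the differentiation above, since the coupling inequality and the cancellation of the drift term are a priori meaningful only for $\psi \in D(|A|^k)$, whereas the hypothesis merely places $\psi_0$ in $D(|A|^{k/2})$; in particular $|A|^k\psi$ and $|A|^k A\psi$ need not lie in $H$. I would resolve this by a regularization/density argument: either approximate $\psi_0$ by vectors of $D(|A|^k)$ and pass to the limit using the domain preservation of Assumption~\ref{ASS_1}(3), or replace $|A|^k$ by the bounded cut-offs $|A|^k(1+\varepsilon|A|^k)^{-1}$, carry out the now-legitimate computation, and let $\varepsilon\to 0$, controlling the commutator contributions through Assumption~\ref{ASS_1}(4)--(5). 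This is precisely the content of \cite[Proposition~2]{weakly-coupled}, to which the statement may alternatively be reduced directly.
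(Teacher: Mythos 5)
Your proposal is correct and matches the paper's route: the paper proves this proposition by directly invoking \cite[Proposition~2]{weakly-coupled}, and your Gr\"onwall-type energy estimate --- cancellation of the drift term by skew-adjointness of $|A|^{k}A$, the coupling-constant bounds on the $B$ and $C$ contributions, summation of $|u_j|\tau_j$ and $u_j^2\tau_j$ over the intervals of constancy, and the regularization needed to justify the differentiation when $\psi_0$ only lies in $D(|A|^{k/2})$ --- is exactly the argument behind that cited result, as you note yourself. No gaps; the only point worth keeping explicit in a write-up is that $c_{(A,B,C,k)}\geq 0$, which you use when enlarging $\|u\|_{L^1}+\|u\|_{L^2}^2$ to $K$ in the exponent.
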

Equation~\eqref{eq:0505} allows to define the solutions of~\eqref{EQ_main} for controls $u$ that are not necessarily piecewise constant. Indeed, let 
$u$ be in $L^1(\mathbf{R},\mathbf{R})\cap L^2(\mathbf{R},\mathbf{R})$ with support in 
$[0,T]$ for some $T>0$. There exists a sequence $(u_n)_{n\in \mathbf{N}}$ of piecewise 
constant functions with support in $[0,T]$ such that $\|u_n\|_{L^1}\leq \|u\|_{L^1}$ and 
$\|u_n\|_{L^2}\leq \|u\|_{L^2}$ for every $n$ in $\mathbf{N}$ and  the sequence $(u_n)_{n 
\in \mathbf{N}}$ tends to $u$ both in $L^1$ and in $L^2$ norm. Next result then guarantees convergence of the propagators.
\begin{lemma}\label{LEM_Propagateur_Cauchy}
Let $(u_{n})_{n\in \mathbf{N}}$ be a Cauchy sequence of piecewise constant functions 
both in $L^{1}$ and 
$L^{2}$, then
for every $t$ in $\mathbf{R}$ and every $\psi$ in $D(A)$, the sequence 
$(\Upsilon^{u_n}_{t,0}\psi)_{n \in \mathbf{N}}$ is a Cauchy sequence.
\end{lemma}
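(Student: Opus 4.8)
The plan is to reduce everything to a single quantitative Duhamel estimate and then propagate it by unitarity and density. Fix $t\geq 0$ (the case $t<0$ is symmetric, using the backward propagator) and first observe that it suffices to prove the Cauchy property for $\phi$ in the dense subspace $D(|A|^{k/2})$: indeed each $\Upsilon^{u_n}_{t,0}$ is unitary, so $\|\Upsilon^{u_n}_{t,0}\|=1$, and for arbitrary $\psi$ (in particular $\psi\in D(A)$) and any $\phi\in D(|A|^{k/2})$ one has
$$
\|\Upsilon^{u_n}_{t,0}\psi-\Upsilon^{u_m}_{t,0}\psi\|\leq 2\|\psi-\phi\|+\|\Upsilon^{u_n}_{t,0}\phi-\Upsilon^{u_m}_{t,0}\phi\| .
$$
Choosing $\phi$ close to $\psi$ and then $n,m$ large would make the right-hand side arbitrarily small. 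This also sidesteps the domain mismatch between $D(A)$ and $D(|A|^{k/2})$.

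For $\phi\in D(|A|^{k/2})$ the solutions are classical, so I would write the variation of constants formula with reference flow $\Upsilon^{u_n}$: differentiating $s\mapsto \Upsilon^{u_n}_{t,s}\Upsilon^{u_m}_{s,0}\phi$ and integrating gives
$$
\Upsilon^{u_n}_{t,0}\phi-\Upsilon^{u_m}_{t,0}\phi=\int_0^t \Upsilon^{u_n}_{t,s}\big[(u_n(s)-u_m(s))B+(u_n(s)^2-u_m(s)^2)C\big]\Upsilon^{u_m}_{s,0}\phi\,\mathrm{d}s .
$$
Using that $\Upsilon^{u_n}_{t,s}$ is unitary and bounding $B,C$ through point (\ref{ASS_BC_Ak_borne}) of Assumption \ref{ASS_1}, namely $\|B\chi\|,\|C\chi\|\leq d\,\||A|^{r/2}\chi\|$, I obtain, with $M:=\sup_{m}\sup_{s\in[0,t]}\||A|^{r/2}\Upsilon^{u_m}_{s,0}\phi\|$,
\begin{align*}
\|\Upsilon^{u_n}_{t,0}\phi-\Upsilon^{u_m}_{t,0}\phi\|
&\leq d\,M\int_0^t\big(|u_n(s)-u_m(s)|+|u_n(s)^2-u_m(s)^2|\big)\,\mathrm{d}s\\
&\leq d\,M\big(\|u_n-u_m\|_{L^1}+\|u_n-u_m\|_{L^2}\,\|u_n+u_m\|_{L^2}\big),
\end{align*}
where the second line splits $|u_n^2-u_m^2|=|u_n-u_m|\,|u_n+u_m|$ and applies Cauchy--Schwarz. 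Since $(u_n)$ is Cauchy in $L^1$ and in $L^2$ (hence bounded in both, so $\|u_n+u_m\|_{L^2}$ stays bounded), the right-hand side tends to $0$ as $n,m\to\infty$, which gives the Cauchy property on $D(|A|^{k/2})$ and, by the reduction above, on all of $H$.

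The main obstacle is the finiteness and uniformity of the a priori bound $M$. I would obtain it from Proposition \ref{PRO_croissance_norme_A}: a Cauchy sequence is bounded, so there is $K$ with $\|u_m\|_{L^1}+\|u_m\|_{L^2}^2\leq K$ for all $m$, whence $\|\Upsilon^{u_m}_{s,0}\phi\|_{k/2}\leq e^{c_{(A,B,C,k)}K}\|\phi\|_{k/2}$ uniformly in $m$ and $s\in[0,t]$. Since $r<k$ and the eigenvalues satisfy $\lambda_j\geq\lambda_1>0$, one has the interpolation bound $\||A|^{r/2}\chi\|\leq \lambda_1^{(r-k)/2}\,\||A|^{k/2}\chi\|$, and combining the two yields a finite $M\leq d'\,e^{c_{(A,B,C,k)}K}\|\phi\|_{k/2}$. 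The only genuinely delicate points are justifying the Duhamel identity for the unbounded generators (which is legitimate precisely because $\phi\in D(|A|^{k/2})\subseteq D(A)$ keeps the solutions in the common domain $D(A)$ where $B,C$ act) and this uniform energy estimate; the two convergence-to-zero computations for the $L^1$ and $L^2$ pieces are then routine.
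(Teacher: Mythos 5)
Your argument is correct and follows the same core mechanism as the paper's own proof: a Duhamel (variation of constants) identity for the difference of the two propagators, the uniform energy estimate of Proposition~\ref{PRO_croissance_norme_A} (legitimate because a Cauchy sequence is bounded in $L^1$ and $L^2$, giving a single $K$ for all $m$), point~\ref{ASS_BC_Ak_borne} of Assumption~\ref{ASS_1} to control $B$ and $C$ along the trajectories, and the factorization $|u_n^2-u_m^2|=|u_n-u_m|\,|u_n+u_m|$ with Cauchy--Schwarz for the quadratic term. The one structural difference is your preliminary reduction: the paper estimates directly for $\psi\in D(A)$, tacitly using $\psi\in D(|A|^{k/2})$ when invoking the energy estimate, whereas you prove the bound on a dense subspace and transfer it to everything by unitarity; this is slightly cleaner and in fact yields the Cauchy property for every $\psi\in H$, not only for $\psi\in D(A)$. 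The only slip is the parenthetical claim $D(|A|^{k/2})\subseteq D(A)$, which holds only when $k\geq 2$; for $1\leq k<2$ you should carry out the Duhamel step for $\phi$ in $D(A)\cap D(|A|^{k/2})=D(|A|^{\max(1,k/2)})$, or simply in the span of finitely many eigenvectors $\phi_j$, which is still dense, so the argument goes through unchanged.
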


\begin{proof}
For the sake of simplicity, we define $x_n:t\mapsto \Upsilon^{u_n}_{t,0}\psi$. Since $\psi$ 
belongs to  the common domain $D(A)$ of the operators $D(A+\alpha B+\alpha^2C),$ for $\alpha \in 
\mathbf{R}$, the continuous mapping $x_n$ is a strong solution of (\ref{EQ_main}), see 
\cite{Marius}. Hence, $x_n$ is differentiable almost everywhere, $x_n(t)=x_n(0)+\int_0^t\dot x_n(s)\mathrm{d}s$ for every $t$ in $\mathbf{R}$ where $\dot{x}_n(t)=Ax_n(t) +u_n B x_n(t) + u_n^2 C x_n(t)$ for almost every $t$ 
in $\mathbf{R}$.

Let $n, m$ in $\mathbf{N}$. The continuous mapping $x_n-x_m$ is differentiable almost everywhere and, for almost every 
$t$ in $\mathbf{R}$, 
\begin{align*}
\frac{{d}}{{d}t}(x_n-x_m)\big |_{t}&\!=\! A(x_n-x_m)(t)+(u_n(t)-u_m(t))Bx_n(t)\\
& \quad  + u_m(t) B(x_n(t)-x_m(t)) \\
& \quad +(u_n^2(t)-u_m^2(t))C x_n(t) \\
& \quad + u_m^2(t) C(x_n(t)-x_m(t)) 
\end{align*}
By Duhamel formula, for every $t$ in $\mathbf{R}$, 
\begin{align}
\|(x_n-x_m)(t)\|= & \big \| \int_0^t \Upsilon^{u_m}_{t,s} (u_n(s)-u_m(s))Bx_n(s)\nonumber\\
&\quad  +(u_n^2-u_m^2)(s)C x_n(s)) \mathrm{d}s\big \| \nonumber\\
\leq & \|u_n-u_m\|_{L^1} \sup_{s \in \mathbf{R}}\|B x_n(s)\|\nonumber\\
&\quad + \|u_n^2-u_m^2\|_{L^1} \sup_{s \in \mathbf{R}}\|C x_n(s)\|\label{eq:0314}
\end{align}
By Proposition \ref{PRO_croissance_norme_A}, if $\|u\|_{L^{1}} + \|u\|^{2}_{L^{2}} < K$ then
$$
\sup_{s \in \mathbf{R}}\||A|^\frac{k}{2} x_n(s)\|\leq e^{c(A,B,C,k)K}\||A|^\frac{k}{2}
\psi\|.
$$ 
Notice, and this is crucial for the result, that the RHS does not depend on $n$.
By  Assumption \ref{ASS_1}.\ref{ASS_BC_Ak_borne}, 
$  \sup_{n\in \mathbf{N}}\sup_{s \in \mathbf{R}}\|B x_n(s)\|<+\infty$ and $ \sup_{n\in \mathbf{N}} \sup_{s \in 
\mathbf{R}}\|C x_n(s)\|<+\infty$. 

Since $(u_n)_{n\in \mathbf{N}}$ is a Cauchy sequence for the norms $L^1$ 
and $L^2$ then $\lim_{N \to \infty} \sup_{n,m\geq N} \|u_n-u_m\|_{L^1}=0$ and 
\begin{eqnarray*}
\lefteqn{\lim_{N \to \infty} \sup_{n,m\geq N} \|u_n^2-u_m^2\|_{L^1}}\\
&\leq&
 \lim_{N \to \infty} \sup_{n,m\geq N} \|u_n-u_m\|_{L^2}\|u_n+u_m\|_{L^2}\\
 &\leq& 
2 \lim_{N \to \infty} \sup_{n,m\geq N}  \|u\|_{L^2}\|u_n-u_m\|_{L^2}=0,
 \end{eqnarray*}
 hence, by~\eqref{eq:0314} we have $ \lim_{N \to \infty} \sup_{n,m\geq N}\|x_n(t)-x_m(t)\|=0$.
\end{proof}

Thanks to Lemma \ref{LEM_Propagateur_Cauchy} and to the completeness of the Hilbert space $H$, one can define $
\Upsilon^u_{t,0}\psi$ for $\psi$ in $D(A)$ as the limit of $\Upsilon^{u_n}_{t,0}\psi$ as $n$ tends to infinity. 
Notice that this limit is 
independent on the chosen approaching sequence $(u_n)_{n \in \mathbf{N}}$ . 
For every $t\geq 0$, the mapping $\psi \mapsto \Upsilon^u_{t,0}\psi$ admits a unique unitary extension on 
$H$. We can therefore define the propagator associated with a control $u$ which is both $L^{1}$ and $L^{2}$, 
as summed up in the following result.
\begin{proposition}\label{PRO_well_posedness}
Let $(A,B,C,k)$ satisfy Assumption \ref{ASS_1}. The mapping $u\mapsto \Upsilon_{\cdot ,0}^{u,(A,B,C)}$ which 
associates with every piecewise constant function a continuous curve of unitary transformations of $H$ 
bounded for the 
$\|\cdot \|_k$ norm admits a unique continuous extension for the $\|\cdot\|_{L^1} +\|\cdot\|_{L^2}$-norm. 
\end{proposition}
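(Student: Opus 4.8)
The plan is to realize $u \mapsto \Upsilon^{u}_{\cdot,0}$ as the unique continuous extension of a uniformly continuous map defined on the dense subset $PC$ and valued in a complete target, exploiting that everything needed is already contained in Proposition~\ref{PRO_croissance_norme_A} and in the proof of Lemma~\ref{LEM_Propagateur_Cauchy}. First I would record that $PC$ is dense in $X:=L^{1}(\mathbf{R},\mathbf{R})\cap L^{2}(\mathbf{R},\mathbf{R})$ for the norm $\|\cdot\|_{L^{1}}+\|\cdot\|_{L^{2}}$, and that, since each $\Upsilon^{u}_{t,0}$ is unitary, it suffices to extend the map pointwise: for fixed $\psi\in D(A)$ and $t\in[0,T]$, I treat $u\mapsto \Upsilon^{u}_{t,0}\psi\in H$ and use completeness of $H$. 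The core estimate is inequality~\eqref{eq:0314}, now read for two \emph{arbitrary} piecewise constant controls $u,v$ rather than for a Cauchy pair: $\|(\Upsilon^{u}_{t,0}-\Upsilon^{v}_{t,0})\psi\|\leq \|u-v\|_{L^{1}}\sup_{s}\|B\Upsilon^{u}_{s,0}\psi\|+\|u^{2}-v^{2}\|_{L^{1}}\sup_{s}\|C\Upsilon^{u}_{s,0}\psi\|$. By Proposition~\ref{PRO_croissance_norme_A} together with Assumption~\ref{ASS_1}.\ref{ASS_BC_Ak_borne}, the two suprema are bounded, uniformly in $t$, by a constant depending only on $K$ and $\|\psi\|_{k/2}$ whenever $\|u\|_{L^{1}}+\|u\|_{L^{2}}^{2}\leq K$; combined with $\|u^{2}-v^{2}\|_{L^{1}}\leq \|u-v\|_{L^{2}}(\|u\|_{L^{2}}+\|v\|_{L^{2}})$ this shows that, on every bounded subset of $PC$, the map is Lipschitz in $\|u-v\|_{L^{1}}+\|u-v\|_{L^{2}}$, with constant independent of both $t\in[0,T]$ and $\psi$ in bounded subsets of $D(|A|^{k/2})$.

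Next I would pass to the limit. Given $u\in X$, choose piecewise constant $u_{n}\to u$ in $L^{1}$ and $L^{2}$ with $\|u_{n}\|_{L^{1}}\leq\|u\|_{L^{1}}$ and $\|u_{n}\|_{L^{2}}\leq\|u\|_{L^{2}}$; the estimate above (equivalently Lemma~\ref{LEM_Propagateur_Cauchy}) makes $t\mapsto\Upsilon^{u_{n}}_{t,0}\psi$ a Cauchy sequence in $C([0,T],H)$ for every $\psi\in D(A)$, so it converges uniformly in $t$, and I define $\Upsilon^{u}_{t,0}\psi$ to be this limit. Independence of the approximating sequence is obtained by interleaving two such sequences into a single Cauchy one. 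Because a strong limit of isometries is an isometry, $\Upsilon^{u}_{t,0}$ is isometric on $D(A)$, hence extends by density to an isometry of $H$, strongly continuous in $t$ thanks to the uniformity of the convergence; the extended map $u\mapsto\Upsilon^{u}_{\cdot,0}$ inherits continuity for the $\|\cdot\|_{L^{1}}+\|\cdot\|_{L^{2}}$ norm directly from the Lipschitz estimate of the previous paragraph, and uniqueness of the extension is immediate from the density of $PC$.

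I expect the only genuinely delicate point to be upgrading \emph{isometry} to \emph{unitary}, since strong limits of unitary operators need not be surjective. To settle this I would run the same limiting procedure on the backward propagators $(\Upsilon^{u_{n}}_{t,0})^{-1}=\Upsilon^{u_{n}}_{0,t}$ --- Lemma~\ref{LEM_Propagateur_Cauchy} is stated for all $t\in\mathbf{R}$ and so applies verbatim --- and check by continuity that their strong limit is a two-sided inverse of $\Upsilon^{u}_{t,0}$, forcing the latter to be onto and therefore unitary. A minor bookkeeping issue, handled by the density of $D(|A|^{k/2})$ in $H$ and the uniform unitary bound, is to promote the $\psi$-dependent estimates, which hold a priori only on regular initial data, to statements valid on all of $H$.
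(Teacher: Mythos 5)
Your proposal is correct and follows essentially the same route as the paper: the Duhamel estimate~\eqref{eq:0314} combined with the uniform energy bound of Proposition~\ref{PRO_croissance_norme_A} and Assumption~\ref{ASS_1}.\ref{ASS_BC_Ak_borne} yields a (locally Lipschitz) Cauchy estimate on $PC$, and one concludes by completeness of $H$, density of $D(A)$, and density of $PC$ in $L^1\cap L^2$, exactly as in Lemma~\ref{LEM_Propagateur_Cauchy} and the discussion preceding the proposition. You are in fact more careful than the paper on two points it leaves implicit --- the local Lipschitz dependence on $u$ needed for continuity (not just well-definedness) of the extension, and the upgrade from isometry to unitarity via the backward propagators --- both of which are handled correctly.
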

Thanks to Proposition \ref{PRO_well_posedness}, one can extend the result of
 Proposition~\ref{PRO_croissance_norme_A} 
to functions in $L^1(\mathbf{R}) \cap L^2(\mathbf{R})$. Another application (instrumental in our  study)
  of Proposition~\ref{PRO_croissance_norme_A} 
is the following approximation result, based on~\cite[Theorem~4]{weakly-coupled}.

\begin{proposition}\label{prop:gga}
 Let $k$ in $\mathbf{N}$ and $(A,B,C,k)$  satisfy Assumption \ref{ASS_1}.
Then
for every $\varepsilon > 0 $, $s<k$, $K\geq 0$, $n\in \mathbf{N}$, and
$(\psi_j)_{1\leq j \leq n}$ in $D(|A|^{k/2})^n$
there exists $N \in \mathbf{N}$
such that
for every piecewise constant function $u$ we have that
$$
\|u\|_{L^{1}} +\|u\|^{2}_{L^2} <  K \Rightarrow\| \Upsilon^{u}_{t}(\psi_{j}) -
X^{u}_{(N)}(t,0)\pi_{N} \psi_{j}\|_{s/2}  <  \varepsilon,
$$
for every $t \geq 0$ and $j=1,\ldots,n$.
\end{proposition}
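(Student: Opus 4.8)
The plan is to deduce everything from the energy estimate of Proposition~\ref{PRO_croissance_norme_A}, applied at once to the full system and to all of its Galerkin approximations, together with an interpolation inequality between the $H$-norm and the $k/2$-norm. Throughout, fix a piecewise constant $u$ with $\|u\|_{L^1}+\|u\|_{L^2}^2<K$, abbreviate $c=c_{(A,B,C,k)}$ and $X(t)=\Upsilon^u_t(\psi_j)$. The first observation is that the weak-coupling bound is inherited, uniformly, by the compressions. Since $A$, $B$ and $C$ are skew-adjoint and $|A|$ is diagonal in the eigenbasis, for $x\in\mathcal{L}_N$ one has $|A^{(N)}|^k x=\pi_N|A|^k x$, whence $\Re\langle|A^{(N)}|^k x,B^{(N)}x\rangle=\Re\langle|A|^k x,Bx\rangle$ and similarly for $C$. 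Thus the coupling constant of $(A^{(N)},B^{(N)},C^{(N)},k)$ is at most $c$ for every $N$, and Proposition~\ref{PRO_croissance_norme_A} yields $\|X^u_{(N)}(t,\tau)x\|_{k/2}\le e^{cK}\|x\|_{k/2}$ uniformly in $N,t,\tau$ and in the admissible $u$.

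Next I split the error as $\Upsilon^u_t(\psi_j)-X^u_{(N)}(t,0)\pi_N\psi_j=(1-\pi_N)X(t)+W(t)$, where $W(t)=\pi_N X(t)-X^u_{(N)}(t,0)\pi_N\psi_j\in\mathcal{L}_N$. The tail is controlled directly by the energy estimate: since $s<k$ and the $\lambda_l$ are increasing,
$$\|(1-\pi_N)X(t)\|_{s/2}\le\lambda_{N+1}^{(s-k)/2}\|X(t)\|_{k/2}\le\lambda_{N+1}^{(s-k)/2}e^{cK}\|\psi_j\|_{k/2},$$
which tends to $0$ as $N\to\infty$, uniformly in $t$ and in $u$.

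For the interior term $W$, projecting \eqref{EQ_main} and using that $A$ commutes with $\pi_N$ shows that $W$ solves, in $\mathcal{L}_N$,
$$\dot W=(A^{(N)}+uB^{(N)}+u^2C^{(N)})W+u\,\pi_N B(1-\pi_N)X+u^2\,\pi_N C(1-\pi_N)X,$$
with $W(0)=0$. As $A^{(N)}+uB^{(N)}+u^2C^{(N)}$ is skew-Hermitian, its propagator $X^u_{(N)}$ is $H$-unitary, so Duhamel's formula combined with Assumption~\ref{ASS_1}.\ref{ASS_BC_Ak_borne} and the estimate $\|B(1-\pi_N)X\|\le d\,\|(1-\pi_N)X\|_{r/2}\le d\,\lambda_{N+1}^{(r-k)/2}e^{cK}\|\psi_j\|_{k/2}=:\varepsilon_N$ (valid since $r<k$) gives
$$\|W(t)\|\le\int_0^t\big(|u|\,\|B(1-\pi_N)X\|+u^2\,\|C(1-\pi_N)X\|\big)\,d\tau\le\varepsilon_N\big(\|u\|_{L^1}+\|u\|_{L^2}^2\big)\le K\varepsilon_N.$$
On the other hand, by the first step $\|W(t)\|_{k/2}\le 2e^{cK}\|\psi_j\|_{k/2}=:M$. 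The interpolation inequality $\|x\|_{s/2}\le\|x\|^{1-s/k}\|x\|_{k/2}^{s/k}$ then yields $\|W(t)\|_{s/2}\le(K\varepsilon_N)^{1-s/k}M^{s/k}$, which tends to $0$ as $N\to\infty$ because $s<k$ and $\varepsilon_N\to0$.

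It then suffices to add the two bounds, choose $N$ so large that their sum is below $\varepsilon$ for every $t\ge0$ and every admissible $u$, and finally take the maximum of the values of $N$ obtained for the finitely many indices $j\le n$. The one genuine difficulty is that $X^u_{(N)}$ is unitary only in the $H$-norm, not in the $s/2$-norm, so one cannot propagate the $s/2$-norm of $W$ directly through Duhamel's formula. This is exactly what the interpolation step circumvents: $W$ is estimated in the $H$-norm, where unitarity applies and the source term is small, and in the $k/2$-norm, where the uniform weak-coupling bound of the first step applies, and the intermediate $s/2$-norm is recovered by interpolation.
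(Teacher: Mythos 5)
Your proof is correct, but it takes a genuinely different route from the paper. The paper's proof of Proposition~\ref{prop:gga} is a two-line citation: the statement for piecewise constant $u$ is exactly \cite[Theorem~4]{weakly-coupled}, and the extension to general $u\in L^1\cap L^2$ follows by the density argument of Proposition~\ref{PRO_well_posedness}. You instead reconstruct the content of that cited theorem from scratch, and your mechanism is the right one: (i) the coupling constant is inherited by the compressions because $|A|^k$ leaves $\mathcal{L}_N$ invariant, so the Gronwall energy estimate holds in $\mathcal{L}_N$ uniformly in $N$; (ii) the tail $(1-\pi_N)X$ is killed in the $s/2$-norm by the spectral weight $\lambda_{N+1}^{(s-k)/2}$; (iii) the interior error $W$ is driven only by the off-diagonal source $u\,\pi_N B(1-\pi_N)X+u^2\,\pi_N C(1-\pi_N)X$, which is small in $H$-norm thanks to Assumption~\ref{ASS_1}.\ref{ASS_BC_Ak_borne} with $r<k$, and the $s/2$-norm of $W$ is then recovered by interpolating between the $H$-norm (where the Galerkin propagator is unitary) and the $k/2$-norm (where the uniform energy bound applies). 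All constants depend only on $K$ and $N$, which justifies the uniformity in $t$, $u$ and the finitely many $\psi_j$. Two small caveats, neither fatal: Proposition~\ref{PRO_croissance_norme_A} as stated concerns $\Upsilon^u$, not $X^u_{(N)}$, so your first step is really an appeal to its proof rather than its statement --- but you supply the only fact needed, namely $c_{(A^{(N)},B^{(N)},C^{(N)},k)}\leq c_{(A,B,C,k)}$; and writing $\dot W=\cdots$ pointwise presumes $X$ is a strong solution, which for $\psi_j\in D(|A|^{k/2})$ with $k=1$ requires the usual weak-formulation or density-in-$D(|A|^k)$ fix, a level of rigor consistent with the paper itself. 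What your version buys is a self-contained, quantitative proof with an explicit rate in $\lambda_{N+1}$; what the paper's version buys is brevity and the automatic extension to non-piecewise-constant controls.
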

\begin{proof}
The result for $u$ piecewise constant is given by~\cite[Theorem~4]{weakly-coupled}. Then, by density, 
(see Proposition \ref{PRO_well_posedness}), the result holds true for general 
$u$ in $L^1(\mathbf{R},\mathbf{R}) \cap L^2(\mathbf{R},\mathbf{R})$. 
\end{proof}

\begin{remark}
In Propositions~\ref{PRO_croissance_norme_A} and~\ref{prop:gga}, the upper bound of the 
$|A|^{k/2}$ norm of the solution of (\ref{EQ_main}) or the bound on the error between 
the infinite dimensional 
system and its finite dimensional approximation only depend on the $L^1$ 
and $L^2$ norms of the control, not on the time. 
\end{remark}

\subsection{An infinite dimensional tracking result}\label{SEC_infinite_tracking}

Proposition~\ref{prop:gga} allows to adapt finite dimensional results to infinite dimensional systems. Here we present a sort of ``Bang-Bang'' Theorem for infinite dimensional systems.

\begin{lemma}\label{lem:tripoint}
Let $(A,B,0,k)$ satisfy Assumption \ref{ASS_1} with $k$ in $\mathbf{N}$, $T$ be a positive number, $a,b$
be two real numbers such that $a<0<b$,
$u^{\ast}$  be a locally integrable function with support in $[0,T]$,
and $N$ be an integer.
Then, 
for every $\varepsilon > 0 $, there exists a piecewise constant control $u_{\varepsilon} : [0,T_{\varepsilon}] \to \{a,0,b\}$  such that, for every $j\leq N$,
$
\|{\Upsilon^{u_{\varepsilon}}_{T_{\varepsilon},0}(\phi_j) - \Upsilon^{u^{\ast}}_{T,0}(\phi_j)}\| < \varepsilon$,
 and  $\|u_{\varepsilon}\|_{L^{1}} \leq \|u^{\ast}\|_{L^1}$.
Moreover, if $u^{\ast}$ is positive, then $u_{\varepsilon}$ may be chosen with value in $\{0,b\}$.
\end{lemma}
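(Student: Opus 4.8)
The plan is to sandwich the infinite-dimensional dynamics between two applications of the Galerkin approximation result, Proposition~\ref{prop:gga}, and to carry out the actual bang-bang approximation at the finite-dimensional level by invoking Lemma~\ref{LEM_tripoint_dim_finie}. The single observation that makes the whole argument work is that any control taking values in $\{a,0,b\}$ has its $L^2$ norm controlled by its $L^1$ norm: indeed $\|u\|_{L^2}^2=\int u^2\leq \max(a^2,b^2)\|u\|_{L^1}$. Hence every bang-bang competitor $u$ with $\|u\|_{L^1}\leq \|u^{\ast}\|_{L^1}$ obeys $\|u\|_{L^1}+\|u\|_{L^2}^2\leq (1+\max(a^2,b^2))\|u^{\ast}\|_{L^1}$, a bound independent of the particular competitor. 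Setting $K$ to be this quantity, enlarged if necessary so that it also dominates $\|u^{\ast}\|_{L^1}+\|u^{\ast}\|_{L^2}^2$ (recall $u^{\ast}\in L^1\cap L^2$, which is what makes $\Upsilon^{u^{\ast}}$ meaningful), I would apply Proposition~\ref{prop:gga} with this $K$, with $s=0$, and with the family $(\phi_j)_{1\leq j\leq N}\subset D(|A|^{k/2})$. This produces an integer $N'\geq N$ such that for every piecewise constant $u$ with $\|u\|_{L^1}+\|u\|_{L^2}^2<K$ one has $\|\Upsilon^{u}_{t}(\phi_j)-X^{u}_{(N')}(t,0)\phi_j\|<\varepsilon/3$ for all $t\geq 0$ and all $j\leq N$ (using $\pi_{N'}\phi_j=\phi_j$ for $j\leq N\leq N'$).

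With $N'$ fixed, I would next run Lemma~\ref{LEM_tripoint_dim_finie} in dimension $N'$ applied to $u^{\ast}$, obtaining a sequence $(u_n)_n$ of piecewise constant controls with values in $\{a,0,b\}$, with $\|u_n\|_{L^1}=\|u^{\ast}\|_{L^1}$, and such that $X^{u_n}_{(N')}(T_n,0)$ converges to $X^{u^{\ast}}_{(N')}(T,0)$. Choosing $n$ large enough that
$$
\|X^{u_n}_{(N')}(T_n,0)\phi_j-X^{u^{\ast}}_{(N')}(T,0)\phi_j\|<\varepsilon/3
\qquad\text{for every } j\leq N,
$$
I set $u_{\varepsilon}:=u_n$ and $T_{\varepsilon}:=T_n$. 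By construction $u_{\varepsilon}$ takes values in $\{a,0,b\}$ and $\|u_{\varepsilon}\|_{L^1}=\|u^{\ast}\|_{L^1}\leq\|u^{\ast}\|_{L^1}$; in particular, by the $L^2$-vs-$L^1$ estimate above, $u_{\varepsilon}$ satisfies $\|u_{\varepsilon}\|_{L^1}+\|u_{\varepsilon}\|_{L^2}^2<K$, so the Galerkin estimate from the first step is available for $u_\varepsilon$ as well.

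It then remains to return to the infinite-dimensional system: applying the first-step inequality to $u_{\varepsilon}$ gives $\|\Upsilon^{u_{\varepsilon}}_{T_{\varepsilon}}(\phi_j)-X^{u_{\varepsilon}}_{(N')}(T_{\varepsilon},0)\phi_j\|<\varepsilon/3$, and applying it to $u^{\ast}$ gives $\|\Upsilon^{u^{\ast}}_{T}(\phi_j)-X^{u^{\ast}}_{(N')}(T,0)\phi_j\|<\varepsilon/3$. A triangle inequality through the two finite-dimensional propagators then yields $\|\Upsilon^{u_{\varepsilon}}_{T_{\varepsilon},0}(\phi_j)-\Upsilon^{u^{\ast}}_{T,0}(\phi_j)\|<\varepsilon$ for all $j\leq N$, while the $L^1$ bound is exactly the equality $\|u_{\varepsilon}\|_{L^1}=\|u^{\ast}\|_{L^1}$. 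For the last assertion, if $u^{\ast}\geq 0$ then Lemma~\ref{LEM_tripoint_dim_finie} already furnishes $u_{\varepsilon}$ with values in $\{0,b\}$, and the argument is unchanged. The delicate point—and the only place where something could go wrong—is the second step's dependence on the first: $N'$ must be chosen \emph{before} the bang-bang control $u_{\varepsilon}$, so the Galerkin order cannot be allowed to depend on the approximant. The uniform bound $K$, made possible precisely by the elementary control of $\|u_\varepsilon\|_{L^2}$ by $\|u_\varepsilon\|_{L^1}$ for $\{a,0,b\}$-valued controls together with the time-independence of the error in Proposition~\ref{prop:gga}, is what breaks this potential circularity.
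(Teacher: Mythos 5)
Your proof is correct and follows essentially the same route as the paper's: fix the Galerkin order via Proposition~\ref{prop:gga} using a bound that is uniform over all admissible competitors, perform the bang-bang approximation at the finite-dimensional level with Lemma~\ref{LEM_tripoint_dim_finie}, and conclude by a triangle inequality; your explicit handling of the $L^2$-versus-$L^1$ control for $\{a,0,b\}$-valued functions is in fact slightly more careful than the paper's statement of the hypothesis of Proposition~\ref{prop:gga}. (Only cosmetic remark: the correct elementary bound is $\|u\|_{L^2}^2\leq \max(|a|,b)\,\|u\|_{L^1}$ rather than $\max(a^2,b^2)\,\|u\|_{L^1}$, which changes nothing in the argument.)
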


\begin{proof}
Let $\varepsilon>0$. By Proposition \ref{prop:gga}, there exists $N$ in $\mathbf{N}$ such that, for every piecewise constant function $u$ and for every $j\leq N$,
$$ 
\|u\|_{L^{1}} \leq \|u^{\ast}\|_{L^{1}} \Rightarrow\| \Upsilon^{u}_{t}(\phi_j) -
X^{u}_{(N)}(t,0)\pi_{N} \phi_j\| < \varepsilon. 
$$
From Lemma~\ref{LEM_tripoint_dim_finie}, there exists
$u_{\varepsilon}:[0,T\varepsilon]\rightarrow \{a,0,b\}$ piecewise constant such
that $\|u_\varepsilon\|_{L^1}\leq \|u^{\ast}\|_{L^1}$ and
$$
\|X^{u^\ast}_{(N)}(T,0) -X^{u_{\varepsilon}}_{(N)}(T,0)\|<\varepsilon.
$$
Then, for every $j\leq N$,
\begin{eqnarray*}
 \lefteqn{\|\Upsilon^{u_{\varepsilon}}_{T_{\varepsilon},0}(\phi_j) - \Upsilon^{u^{\ast}}_{T,0}(\phi_j)\|}\\
&\leq & \|\Upsilon^{u_{\varepsilon}}_{T_{\varepsilon},0}(\phi_j) - X^{u_\varepsilon}_{(N)}(t,0)\pi_{N} \phi_j \|\\
& & + \|X^{u_\varepsilon}_{(N)}(T_\varepsilon,0)\pi_{N} \phi_j -X^{u^\ast}_{(N)}(T,0)\pi_{N} \phi_j\|\\
&& + \|\Upsilon^{u^\ast}_{T,0}(\phi_j) - X^{u^\ast}_{(N)}(T,0)\pi_{N} \phi_j \|\\
&\leq & 3 \varepsilon. 
\end{eqnarray*}
The same proof shows that, if $u^{\ast}$ is positive, $u_\varepsilon$ can be chosen with values in $\{0,b\}$. 
\end{proof}

\subsection{Simultaneous approximate controllability}\label{SEC_Infinite_proof}

We recall here the following result dealing with approximate controllability for bilinear systems, i.e. when $C=0$. Its proofs is given in~\cite[Theorem 2.11]{Schrod2}. 
\begin{thm}[\cite{Schrod2}]\label{THE_Control_collectively}
 Let $(A,B,0,0)$ satisfy Assumption \ref{ASS_1}.
 If there exists a non-resonant chain of connectedness of $(A,B,0)$
then, for every $N$ in $\mathbf{N}$, for every $\varepsilon>0$, for every $\delta>0$, 
for every unitary operator $\hat{\Upsilon}:H\to H$, 
there exists $T>0$ and a piecewise constant function $u:[0,T]\rightarrow [0,\delta]$ such  that
$\|\Upsilon^u_{T,0}\phi_j-\hat{\Upsilon}\phi_j\|<\varepsilon$, for every $j\leq N$.
\end{thm}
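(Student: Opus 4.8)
The plan is to reduce the infinite-dimensional simultaneous controllability problem to a finite-dimensional Lie-group controllability statement for a Galerkin approximation, and then to lift the finite-dimensional conclusion back to system~(\ref{EQ_main}) with $C=0$. First I would fix $N$, $\varepsilon$, $\delta$ and $\hat\Upsilon$, and replace the target by a unitary $\hat U$ supported on a larger Galerkin space $\mathcal{L}_{N'}$ with $\|\hat U\phi_j-\hat\Upsilon\phi_j\|<\varepsilon/2$ for every $j\le N$; enlarging $N'$ if necessary, the freedom left on $\mathcal{L}_{N'}\ominus\mathcal{L}_N$ lets me take $\hat U$ in the special unitary group of $\mathcal{L}_{N'}$. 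It then suffices to produce a nonnegative piecewise constant control $u:[0,T]\to[0,\delta]$ whose Galerkin propagator $X^u_{(N')}(T,0)$ approximates $\hat U$ on each $\phi_j$, $j\le N$. The passage from $X^u_{(N')}$ to the genuine propagator $\Upsilon^u_{T,0}$ is handled by a Galerkin approximation estimate in the spirit of Proposition~\ref{prop:gga} (here in $H$-norm, since Assumption~\ref{ASS_1} with $k=0$ forces $B$ bounded), exactly as in the proof of Lemma~\ref{lem:tripoint}, the error being uniform in time and controlled by $\|u\|_{L^1}+\|u\|_{L^2}^2$.

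The heart of the matter is the finite-dimensional claim that the attainable set of $\dot x=(A^{(N')}+uB^{(N')})x$ from the identity, using controls valued in $[0,\delta]$, is dense in the special unitary group of $\mathcal{L}_{N'}$. To prove it I would pass to the interaction picture $y(t)=e^{-tA^{(N')}}x(t)$, which solves $\dot y=u(t)\,e^{-tA^{(N')}}B^{(N')}e^{tA^{(N')}}y$, the coupling matrix having entries $b_{jl}\,e^{\mathrm{i}(\lambda_j-\lambda_l)t}$. For a single transition $(j,l)$ of the non-resonant chain I would drive $u$ by a small oscillation at the frequency $|\lambda_j-\lambda_l|$: by a rotating-wave/averaging argument every off-diagonal entry except the resonant one averages to zero, because non-resonance forces $|\lambda_m-\lambda_n|=|\lambda_j-\lambda_l|$ with $b_{mn}\ne0$ to entail $\{m,n\}=\{j,l\}$. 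The averaged dynamics is then an elementary rotation in the $(j,l)$-plane, with an angle tunable through the amplitude and duration of $u$ and, by shifting the phase of the oscillation, spanning the whole $(j,l)$-block. Lemma~\ref{LEM_recurrence} makes this rigorous: it lets me choose the switching times so that all spectator phases $e^{\mathrm{i}(\lambda_m-\lambda_n)t}$ simultaneously return close to $1$, rendering the averaging error arbitrarily small.

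Composing these elementary rotations along the chain of connectedness generates, in the closure of the attainable set, the whole special unitary group of $\mathcal{L}_{N'}$; that this closure is a group---so that approximate inverses are available and the bracket-generated directions are genuinely reachable---again follows from Lemma~\ref{LEM_recurrence}, which furnishes arbitrarily long free evolutions $e^{tA^{(N')}}$ close to the identity. Thus $\hat U$ is reached up to $\varepsilon/2$ on the first $N$ modes. To respect the constraint $u\in[0,\delta]$, I would realize each oscillation as a fixed positive offset plus a perturbation kept below $\delta$, and convert the resulting controls into nonnegative piecewise constant ones without increasing the $L^1$ cost, using the time-reparametrization $\mathcal{P}$ together with the bang-bang approximations of Lemmas~\ref{LEM_tripoint_dim_finie} and~\ref{lem:tripoint}.

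The step I expect to be the main obstacle is exactly the restriction to nonnegative controls of arbitrarily small amplitude. With unconstrained real controls the Lie algebra generated by $A^{(N')}$ and $B^{(N')}$ gives controllability at once, but positivity forbids using $\pm B^{(N')}$ freely and the small bound $\delta$ rules out large-bracket arguments, forcing reliance on the averaging mechanism. Quantifying the averaging error uniformly, matching all spectator phases simultaneously through Lemma~\ref{LEM_recurrence}, and checking that the positive offset needed to keep $u\ge0$ does not destroy the non-resonance structure isolating each transition, is where the genuine technical work is concentrated.
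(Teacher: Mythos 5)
The paper does not actually prove this statement: it is imported verbatim as Theorem~\ref{THE_Control_collectively} from \cite[Theorem 2.11]{Schrod2} and used as a black box, so there is no internal proof to compare yours against. Your outline is nonetheless a faithful reconstruction of the strategy of that reference: Galerkin reduction with an error controlled uniformly in time by the $L^1$ norm of the control, passage to the interaction frame where the coupling matrix has entries $b_{jl}e^{\mathrm{i}(\lambda_j-\lambda_l)t}$, generation of elementary rotations in each $(j,l)$-plane of the non-resonant chain by small-amplitude resonant excitation, recovery of the group structure of the closure of the attainable set via the recurrence argument (Lemma~\ref{LEM_recurrence}), and absorption of the determinant obstruction by enlarging the Galerkin space. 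Two details deserve more care than your sketch gives them. First, the diagonal entries $b_{mm}$ (purely imaginary, since $B$ is skew-adjoint) oscillate at frequency zero and therefore do \emph{not} average out under a nonnegative control with $\int u\neq 0$; each elementary rotation comes with spurious phases on all modes, which must be compensated (by free evolution and another appeal to Lemma~\ref{LEM_recurrence}) before you can claim the target $\hat U$ itself, and not merely $\hat U$ up to phases, is approached. Second, your remark that Assumption~\ref{ASS_1} with $k=0$ ``forces $B$ bounded'' glosses over the fact that item 5 of the Assumption is vacuous for $k=0$ (it asks for $0\leq r<0$); the boundedness of $B$ needed for the Galerkin estimate has to be taken as a hypothesis, as it is in \cite{Schrod2}, rather than derived. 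Neither point invalidates the approach.
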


We now proceed  to the proof of the Theorem~\ref{THE_main}. 

\begin{proof}[Proof of Theorem~\ref{THE_main} (case $r=0$)]
Assume that $(A,B,C,k)$ satisfies Assumption \ref{ASS_1} for some $k$ in $\mathbf{N}$ and admits a strongly non-degenerate chain of connectedness. 
Then, there exists $\alpha>0$ such that $(A,B+\alpha C,0)$  satisfies Assumption \ref{ASS_1} 
and admits a  strongly non-degenerate chain of connectedness. 
By analyticity, this property is true for almost every $\alpha$ in $\mathbf{R}$.
From Theorem~\ref{THE_Control_collectively}, for every $N$ in $\mathbf{N}$, for every
unitary operator $\hat{\Upsilon}:H\to H$ 
for every $\varepsilon>0$, and for every $\delta>0$, 
there exist $T>0$ and a piecewise constant function $u:[0,T]\rightarrow [0,\delta]$ such  that
 $\|\Upsilon^{u,(A,B+\alpha C,0)}_{T,0}\phi_j-\hat{\Upsilon}\phi_j\|<\varepsilon$. 
By Lemma~\ref{lem:tripoint},
there exists $\tilde{u}:[0,T_{\tilde{u}}]\rightarrow \{0,\alpha\}$ such that
 $\|\Upsilon^{\tilde{u},(A,B+\alpha C,0)}_{T_{\tilde{u}},0}\phi_j- \Upsilon^{u,(A,B+\alpha C,0)}_{T,0}\phi_j\|<
 \varepsilon$.  
 Thus, for $j\leq N$, 
$\|\Upsilon^{\tilde{u},(A,B+\alpha C,0)}_{T_{\tilde{u}},0}\phi_j-\hat{\Upsilon}\phi_j\|<2\varepsilon$.
To conclude the proof of Theorem \ref{THE_main} for $r=0$, it is enough to notice that 
$\Upsilon^{\tilde{u},(A,B+\alpha C,0)}_{T_{\tilde{u}},0}=
 \Upsilon^{\tilde{u},(A,B,C)}_{T_{\tilde{u}},0}$, since  for every $t$, 
$\tilde{u}(t)B+\tilde{u}^2(t)C=\tilde{u}(t)(B+\alpha C)$ as $\tilde{u}$ takes  only the values $0$ and $\alpha$.
   \end{proof}

\subsection{Controllability between eigenstates}\label{SEC_RWA}
In this Section, we use averaging techniques to provide explicit expressions of control laws steering one eigenstate of the system to another in order to prove Theorems  \ref{THE_main_weak} and \ref{THE_main_12}.
 
Averaging methods consist in replacing an oscillating dynamics $\dot{y}=f(t) y$ by its average $\dot{z}= \bar{f} z$  where $\bar{f}=\lim \frac{1}{T}\int_0^T f(t)\mathrm{d}t$. When the dynamics $f$ is regular and small enough, the solutions $y$ and $z$ have similar behaviors.  Averaging theory has grown to a whole theory in itself. We refer to \cite{sanders} for an introduction.  In quantum mechanics, averaging theory has been extensively used (under the name of ``Rotating Wave Approximation'') since the 60's, for finite dimensional systems.  It has recently been extended to the case of infinite dimensional systems. In the following proposition, we restate~\cite[Theorem~1 and Section 2.4]{periodic} in our framework.

\begin{proposition}\label{PRO_RWA}
Let $(A,B,0,k)$ satisfy Assumption \ref{ASS_1}. Assume that $(p,q)$ is a weakly non-degenerate transition 
of $(A,B,0)$.
Define 
$ \mathcal{N}=\{n \in \mathbf{N}\,|\,$ there exists $(l_1,l_2) \mbox{ with } b_{l_1,l_2}\neq 0 \mbox{ and } 
|l_1-l_2|=n |\lambda_p-\lambda_q| \mbox{ and } \{l_1,l_2\}\cap \{p,q\}\neq \emptyset\}.
$
If $u$ and $u^{2}$ are locally integrable, $2\pi/|\lambda_p-\lambda_q|$-periodic and satisfies, 
for every $n$ in $\mathcal{N}$,
\begin{equation}\label{eq:conn1}
\int_0^{2\pi/|\lambda_p-\lambda_q|} 
\!\!\!\!\!\!\!\!e^{\mathrm{i}n|\lambda_p-\lambda_q| t} u(t) \mathrm{d}t  \neq 0
\quad \mbox{ if  } n=1
\end{equation}
and 
\begin{equation}\label{eq:connmag1}
 \int_0^{2\pi/|\lambda_p-\lambda_q|} \!\!\!\!\!\!\!\! e^{\mathrm{i}n|\lambda_p-\lambda_q| t} u(t) \mathrm{d}t  = 0
\quad \mbox{ if  } n>1
\end{equation}
then there exists $T^{\ast}>0$ such that $|\langle \phi_p, \Upsilon^{u^\ast/n,(A,B,0)}_{nT^{\ast},0}\phi_{q}\rangle |$ 
tends to $1$ as $n$ tends to infinity. Moreover,
$$
\lim_{n\to \infty}\frac{1}{n}\int_0^{nT^\ast}|u^\ast(t)|\mathrm{d}t\leq \frac{\pi}{2|b_{pq}|} \frac{\int_0^T |u^\ast(t)|\mathrm{d}t}{\left | \int_0^T u^\ast(t)\mathrm{d}t \right |}.
$$
\end{proposition}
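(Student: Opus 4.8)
The plan is to recognize this proposition as the transcription into the present framework of the infinite-dimensional averaging (``Rotating Wave Approximation'') theorem of~\cite{periodic}, and to verify that its hypotheses are met. Since here $C=0$ the system is the bilinear one $\dot\psi=(A+vB)\psi$, and I would first pass to the interaction frame by setting $\psi(t)=e^{tA}y(t)$, so that $\dot y=v(t)\,\Theta(t)\,y$ where $\Theta(t)=e^{-tA}Be^{tA}$ has matrix elements $\Theta_{jl}(t)=b_{jl}e^{\mathrm{i}(\lambda_j-\lambda_l)t}$. Applying the control $v=u^{\ast}/n$ on $[0,nT^{\ast}]$ and rescaling time by $s=t/n$ turns this into $\dot{\tilde y}=u^{\ast}(ns)\Theta(ns)\tilde y$ on $[0,T^{\ast}]$, a system whose right-hand side oscillates faster and faster as $n\to\infty$: precisely the regime in which averaging applies.

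Next I would compute the averaged generator. Because $u^{\ast}$ is periodic of period $T=2\pi/|\lambda_p-\lambda_q|$, the long-time average of $u^{\ast}(t)\Theta_{jl}(t)$ is nonzero only when $|\lambda_j-\lambda_l|$ is an integer multiple $n|\lambda_p-\lambda_q|$, in which case it equals $b_{jl}$ times the $n$-th Fourier coefficient of $u^{\ast}$. The weak non-degeneracy of $(p,q)$ ensures that at the fundamental frequency the only surviving coupling meeting $\{p,q\}$ is the $p\leftrightarrow q$ one, while condition~\eqref{eq:connmag1} annihilates the Fourier coefficients at all higher harmonics $n\in\mathcal{N}$, thereby suppressing every averaged coupling that would let population leak from $\phi_p$ or $\phi_q$ to the rest of the spectrum; condition~\eqref{eq:conn1} guarantees the $p\leftrightarrow q$ coupling itself is nonzero. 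Consequently the averaged flow leaves $\mathrm{span}(\phi_p,\phi_q)$ invariant and acts there as an effective two-level Rabi rotation with off-diagonal rate proportional to $|b_{pq}|$ times the first Fourier coefficient of $u^{\ast}$. I would then fix $T^{\ast}$ to be the transfer (``$\pi$-pulse'') time of this rotation, so that the averaged solution issued from $\phi_q$ reaches $\phi_p$ up to a phase, i.e. $|\langle\phi_p,\tilde z(T^{\ast})\rangle|=1$.

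To convert this into a statement about the true propagator I would invoke~\cite[Theorem~1]{periodic}: as $n\to\infty$ the solution of the rescaled oscillating system converges in $H$ to the averaged one, and since $e^{tA}$ is unitary the interaction-frame phase does not affect the modulus, giving $|\langle\phi_p,\Upsilon^{u^{\ast}/n}_{nT^{\ast},0}\phi_q\rangle|\to|\langle\phi_p,\tilde z(T^{\ast})\rangle|=1$. The point that makes this legitimate in infinite dimension — and the step I expect to be the main obstacle — is the uniform control of the high-energy tail. Here the scaling is benign: the rescaled controls satisfy $\|u^{\ast}/n\|_{L^1}\to T^{\ast}\,\tfrac1T\int_0^T|u^{\ast}|$ and $\|u^{\ast}/n\|_{L^2}^2\to 0$, so their $L^1+L^2$-size is bounded uniformly in $n$. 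This is exactly what is needed for the weakly-coupled energy estimate of Proposition~\ref{PRO_croissance_norme_A} and the uniform Galerkin approximation of Proposition~\ref{prop:gga} to reduce the infinite-dimensional averaging to a finite-dimensional one on $\mathcal{L}_N$, with an error that is small uniformly in $n$.

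Finally, the quantitative bound follows by a direct computation: evaluating $\tfrac1n\int_0^{nT^{\ast}}|u^{\ast}|$ using the periodicity of $u^{\ast}$ yields $T^{\ast}\,\tfrac1T\int_0^T|u^{\ast}|$, and substituting the explicit expression of the transfer time $T^{\ast}$ in terms of $|b_{pq}|$ and the resonant Fourier coefficient of $u^{\ast}$ produces the claimed estimate.
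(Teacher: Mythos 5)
The paper gives no proof of Proposition~\ref{PRO_RWA}: it is explicitly presented as a restatement of \cite[Theorem~1 and Section~2.4]{periodic} in the notation of Assumption~\ref{ASS_1} and is then used as a black box in the proof of Proposition~\ref{PRO_averaging}. Your proposal correctly identifies this, and your sketch of the underlying argument (interaction frame, time rescaling, averaging with the weak non-degeneracy and Fourier conditions isolating the $p\leftrightarrow q$ coupling, uniform $L^1+L^2$ bounds feeding Propositions~\ref{PRO_croissance_norme_A} and~\ref{prop:gga} for the infinite-dimensional reduction, and the periodicity computation for the $L^1$ estimate) is the standard proof of the cited theorem, so it is essentially the same approach.
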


Our aim is to extend the result of Proposition~\ref{PRO_RWA} to the case where $C\neq 0$.
\begin{proposition}\label{PRO_averaging}
 Let $(A,B,C,k)$ satisfy Assumption \ref{ASS_1}. Assume that $(p,q)$ is a weakly non-degenerate transition 
of $(A,B,0)$.
Define 
$ \mathcal{N}=\{n \in \mathbf{N}\,|\,$ there exists $(l_1,l_2) \mbox{ with } b_{l_1,l_2}\neq 0 \mbox{ and } 
|l_1-l_2|=n |\lambda_p-\lambda_q| \mbox{ and } \{l_1,l_2\}\cap \{p,q\}\neq \emptyset\}.
$
If $u$ and $u^2$ are locally integrable, $2\pi/|\lambda_p-\lambda_q|$-periodic and satisfy, 
for every $n$ in $\mathcal{N}$,
$$
\int_0^{2\pi/|\lambda_p-\lambda_q|} 
\!\!\!\!\!\!\!\!e^{\mathrm{i}n|\lambda_p-\lambda_q| t} u(t) \mathrm{d}t  \neq 0
\quad \mbox{ if  } n=1
$$
and 
$$
 \int_0^{2\pi/|\lambda_p-\lambda_q|} \!\!\!\!\!\!\!\! e^{\mathrm{i}n|\lambda_p-\lambda_q| t} u(t) \mathrm{d}t  = 0
\quad \mbox{ if  } n>1
$$
then there exists $T^{\ast}>0$ such that $|\langle \phi_p, \Upsilon^{u^\ast/n,(A,B,C)}_{nT^{\ast},0}\phi_{q}\rangle |$ 
tends to $1$ as $n$ tends to infinity.
\end{proposition}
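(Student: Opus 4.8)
The plan is to treat the polarizability term $u^2 C$ as a vanishing perturbation of the bilinear dynamics and to reduce the statement to Proposition~\ref{PRO_RWA}. First I would observe that $(A,B,0,k)$ itself satisfies Assumption~\ref{ASS_1}: each of the five items for $(A,B,0)$ follows from the corresponding item for $(A,B,C)$ by setting the coefficient of $C$ to zero. The set $\mathcal{N}$ and the Fourier conditions imposed on $u$ are verbatim those of Proposition~\ref{PRO_RWA}, and $(p,q)$ is weakly non-degenerate for $(A,B,0)$ by hypothesis. Hence Proposition~\ref{PRO_RWA} supplies a fixed $T^{\ast}>0$, independent of $n$, such that
$$
\left|\langle \phi_p, \Upsilon^{u^{\ast}/n,(A,B,0)}_{nT^{\ast},0}\phi_q\rangle\right| \longrightarrow 1 \quad \mbox{ as } n\to\infty .
$$
It then suffices to show that, with the same control $u^{\ast}/n$, replacing $(A,B,0)$ by $(A,B,C)$ perturbs the propagator applied to $\phi_q$ by a quantity that tends to $0$.

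For fixed $n$, write $v=u^{\ast}/n$ on $[0,nT^{\ast}]$ and set $y(t)=\Upsilon^{v,(A,B,C)}_{t,0}\phi_q$ and $z(t)=\Upsilon^{v,(A,B,0)}_{t,0}\phi_q$. Since $\phi_q\in D(A)$, I would subtract the two evolution equations and apply the Duhamel formula exactly as in the proof of Lemma~\ref{LEM_Propagateur_Cauchy}; the only discrepancy between the two dynamics is the term $v^2 C y$, so
$$
\|y(nT^{\ast})-z(nT^{\ast})\| \leq \|v^2\|_{L^1([0,nT^{\ast}])}\, \sup_{s}\|C y(s)\| .
$$
(For the non piecewise constant control $v$ this estimate is obtained by approximation, using the continuity in Proposition~\ref{PRO_well_posedness}.)

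The core of the argument is a bookkeeping of norms exploiting that $u^{\ast}$ is $2\pi/|\lambda_p-\lambda_q|$-periodic. Over the growing horizon $[0,nT^{\ast}]$ one has $\int_0^{nT^{\ast}}|u^{\ast}|=O(n)$ and $\int_0^{nT^{\ast}}(u^{\ast})^2=O(n)$, hence $\|v\|_{L^1}=O(1)$, $\|v\|^2_{L^2}=O(1/n)$, and $\|v^2\|_{L^1([0,nT^{\ast}])}=O(1/n)$. In particular $\|v\|_{L^1}+\|v\|^2_{L^2}$ is bounded by some $K$ independent of $n$, so Proposition~\ref{PRO_croissance_norme_A} gives $\|y(s)\|_{k/2}\leq e^{c_{(A,B,C,k)}K}\|\phi_q\|_{k/2}$ uniformly in $n$ and $s$. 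Combining this with Assumption~\ref{ASS_1}.\ref{ASS_BC_Ak_borne}, which bounds $\|Cy(s)\|$ by $d\,\||A|^{r/2}y(s)\|$ with $r<k$, and with the unitarity bound $\|y(s)\|=1$, yields $\sup_s\|Cy(s)\|=O(1)$ uniformly in $n$. I expect this uniform-in-$n$ control of $\sup_s\|Cy(s)\|$ to be the main point to get right, precisely because the horizon $nT^{\ast}$ diverges; the resolution is that the scaling $v=u^{\ast}/n$ makes the $L^2$ mass of the control decay while keeping its $L^1$ mass bounded, so the weakly-coupled estimate applies with a constant independent of $n$, and the quadratic term picks up the extra decaying factor $\|v^2\|_{L^1}=O(1/n)$.

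Combining the two displayed estimates gives $\|y(nT^{\ast})-z(nT^{\ast})\|=O(1/n)\to 0$. Finally, from
$$
\left|\langle \phi_p, y(nT^{\ast})\rangle\right| \geq \left|\langle \phi_p, z(nT^{\ast})\rangle\right| - \|y(nT^{\ast})-z(nT^{\ast})\|,
$$
the right-hand side tends to $1$, while $|\langle \phi_p, y(nT^{\ast})\rangle|\leq 1$ by unitarity; hence $|\langle \phi_p, \Upsilon^{u^{\ast}/n,(A,B,C)}_{nT^{\ast},0}\phi_q\rangle|\to 1$, which is the assertion.
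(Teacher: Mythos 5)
Your proposal is correct and follows essentially the same route as the paper: reduce to Proposition~\ref{PRO_RWA} for $(A,B,0)$, then control the discrepancy via Duhamel, using the periodicity of $u^{\ast}$ to get $\|u^{\ast 2}/n^2\|_{L^1([0,nT^{\ast}])}=O(1/n)$ and Proposition~\ref{PRO_croissance_norme_A} together with Assumption~\ref{ASS_1}.\ref{ASS_BC_Ak_borne} to bound $\sup_s\|C\Upsilon^{u^{\ast}/n,(A,B,C)}_{s,0}\phi_q\|$ uniformly in $n$. The paper's proof is the same argument, with the final triangle-inequality step left implicit.
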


\begin{proof}
For the sake of readability, we define $T:=\frac{2\pi}{|\lambda_p-\lambda_q|}$. 
Let $u$ be a locally integrable and square integrable $T$-periodic function satisfying~\eqref{eq:conn1} and~\eqref{eq:connmag1}.
By Proposition~\ref{PRO_RWA} there exists $T^{*}>0$ such that
$|\langle \phi_p, \Upsilon^{u^\ast/n,(A,B,0)}_{nT^{\ast},0}\phi_{q}\rangle | \to 1$ 
 as $n \to +\infty$.

Notice that, for every $n$ in $\mathbf{N}$, 
\begin{eqnarray}
\int_0^{nT^\ast}\left|\frac{u(s)}{n}\right |^2\mathrm{d}s &\leq & \frac{1}{n^2} \left ( \frac{nT^\ast}{T} +1 \right ) \int_0^{T}\!\!\! |u(s)|^2\mathrm{d}s \nonumber \\
&= & \left ( \frac{T^\ast}{nT} +\frac{1}{n^2} \right ) \int_0^{T} \!\!\!|u(s)|^2 \mathrm{d}s.\label{EQ_major_norme_L2_u}
\end{eqnarray}
By Proposition \ref{PRO_croissance_norme_A}, 
\begin{equation}
\sup_{n\in \mathbf{N}} \sup_{0\leq s, t\leq nT^\ast}\| \Upsilon^{u/n,(A,B,C)}_{s,t}\phi_q\|_{k/2}<+\infty,
\end{equation}
and, by Assumption \ref{ASS_1}.\ref{ASS_BC_Ak_borne}, 
\begin{equation}\label{EQ_major_norme_C}
 \sup_{n\in \mathbf{N}} \sup_{0\leq s,t\leq nT^\ast} \|C \Upsilon^{u/n,(A,B,C)}_{s,t}\phi_q\|<+\infty.
 \end{equation}

 Since $\phi_q$ belongs to $D(A)$, for every $n$ in $\mathbf{N}$ the mapping 
$t\mapsto \Upsilon^{u/n,(A,B,C)}_{t,0}\phi_q$ is a strong solution
of (\ref{EQ_main}). For every  $n \in \mathbf{N}$, by Duhamel formula we have,
\begin{eqnarray*}
\lefteqn{\left \|\Upsilon^{u/n,(A,B,C)}_{nT^\ast,0}\phi_q -\Upsilon^{u/n,(A,B,0)}_{nT^\ast,0}\phi_q \right \|}\\
&=& \left \|\frac{1}{n^2}\int_0^{nT^\ast}\!\!\!\!\!\!\!\!\!\! u^2(s)\Upsilon^{u/n,(A,B,0)}_{nT^\ast,s}C \Upsilon^{u/n,(A,B,C)}_{s,0}\phi_q 
\mathrm{d}s \right \|\\
& \leq &  \left ( \frac{1}{n^2}\int_0^{nT^\ast}\!\!\!\!\!\!\!\!\!\! u^2(s)\mathrm{d}s \right )   \sup_{n\in \mathbf{N}} \sup_{0\leq s,t\leq nT^\ast} \|C \Upsilon^{u/n,(A,B,C)}_{s,t}\phi_q\|
\end{eqnarray*}
From (\ref{EQ_major_norme_L2_u}) and (\ref{EQ_major_norme_C}), this last quantity tends to zero as $n$ tends to infinity, and Proposition \ref{PRO_averaging} follows from Proposition \ref{PRO_RWA}.  
\end{proof}

We now proceed to the proofs of Theorems \ref{THE_main_weak} and Theorems \ref{THE_main_12} in the case $r=0$.
\begin{proof}[Proof of Theorem \ref{THE_main_12} (case $r=0$)]
Let $\varepsilon>0$ and  $\delta>0$ such that $b_{pq}+\delta c_{pq}\neq 0$ be given 
and define $T=2\pi/|\lambda_p-\lambda_q|$.
 Using $u^\ast:t\mapsto 1+\sin ( t 2\pi/T)$ with the system $(A,B+\delta C,0)$,  Proposition \ref{PRO_RWA} states that 
 there exists $T^\ast$ such that 
 $|\langle \phi_p,  \Upsilon^{u^\ast/n, (A,B+\delta C,0)}_{nT^\ast,0} \phi_q \rangle| $
 tends to $1$ as $n$ tends to infinity. 
 
By Assumption \ref{ASS_1}, the real number $\lambda_p$ is not zero. Hence
there exists a sequence $(t_{n})_{n\in \mathbf{N}}$ such that
$\|e^{t_n A}\Upsilon^{u^\ast/n, (A,B+\delta C,0)}_{nT^\ast,0} \phi_q-\phi_p\|$ tends to 
zero as $n$ tends to infinity. Notice that 
$$
e^{t_n A}\Upsilon^{u^\ast/n, (A,B+\delta C,0)}_{nT^\ast,0} \phi_q=\Upsilon^{w_n, (A,B+\delta C,0)}_{nT^\ast+t_n,0} \phi_q,
$$
where $w_n(s)=u^\ast(s)/n$ for $s\leq nT^\ast$ and $w_n(s)=0$ for $s\in (nT^\ast,nT^\ast+t_n)$.
 
 From Lemma~\ref{lem:tripoint}, for every $n$ in $\mathbf{N}$, there
 exists $u_n:[0,T_n]\to \{0,\delta \}$ such that $\| \Upsilon^{u_n,(A,B+\delta C,0)}_{T_n,0} \phi_q-
  \Upsilon^{w_n,(A,B+\delta C,0)}_{nT^\ast +t_n,0} \phi_q\|<\varepsilon$. Conclusion follows
  from the fact that
  $\Upsilon^{u_n,(A,B+\delta C,0)}_{T_n,0} \phi_q=\Upsilon^{u_n,(A,B,C)}_{T_n,0} \phi_q$,
  for every $n$ in $\mathbf{N}$.
\end{proof}

While primary oriented to the non-bilinear system (\ref{EQ_main}), Theorem \ref{THE_main_12} holds 
when $C=0$ and represents a slight improvement (by a factor $4/5$) of Proposition 2.8 in \cite{Schrod2}.

\begin{proof}[Proof of Theorem~\ref{THE_main_weak} (case $r=0$)]
Let $S$ be a weakly-non-degenerate chain of connectedness of $(A,B,C)$.
Theorem \ref{THE_main_weak} for $r=0$ is a consequence Theorem~\ref{THE_main_12} applied iteratively on every pair $(p,q)$ in $S$.
\end{proof}
\subsection{Approximate controllability in higher norms}\label{SEC_HIGH_NORM}
The proofs of Theorems~\ref{THE_main_weak} and~\ref{THE_main_12} for the general case $r>0$ are a consequence of an easy and well-known result of interpolation. We give a proof for the sake of completeness. 

\begin{lemma}\label{lem:interpolation}
Let $s<r$ be two real numbers, $(x_n)_{n \in \mathbf{N}}$ be a sequence that converges to zero in $H$ in $s$-norm and 
is bounded in $r$-norm. Then  $(x_n)_{n \in \mathbf{N}}$ tends to zero in $q$-norm for any $q<r$.
\end{lemma}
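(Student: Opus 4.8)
The plan is to reduce the whole statement to a single convexity (interpolation) inequality relating the $s$-, $q$- and $r$-norms, which in turn follows from H\"older's inequality once the sequence is expanded in the eigenbasis of $A$. First I would expand each $x_n$ in the Hilbert basis $(\phi_j)_{j\in\mathbf{N}}$ of eigenvectors of $A$ furnished by Assumption \ref{ASS_1}, writing $x_n=\sum_j a_{n,j}\phi_j$. Since $|A|$ is self-adjoint, positive and diagonal in this basis with eigenvalues $\lambda_j$, every $t$-norm is a weighted $\ell^2$ norm of the coefficient sequence:
$$
\|x_n\|_t^2=\||A|^t x_n\|^2=\sum_j \lambda_j^{2t}\,|a_{n,j}|^2 \qquad\text{for every real } t.
$$

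The only genuine case is $s<q<r$ (the case $q\le s$ being immediate, see below). For this range I would set $\theta=(q-s)/(r-s)\in(0,1)$, so that $2q=(1-\theta)\,2s+\theta\,2r$, and split each summand as
$$
\lambda_j^{2q}\,|a_{n,j}|^2=\bigl(\lambda_j^{2s}\,|a_{n,j}|^2\bigr)^{1-\theta}\bigl(\lambda_j^{2r}\,|a_{n,j}|^2\bigr)^{\theta}.
$$
Applying H\"older's inequality to the two nonnegative sequences with conjugate exponents $1/(1-\theta)$ and $1/\theta$ then yields the interpolation inequality
$$
\|x_n\|_q^2\le \bigl(\|x_n\|_s^2\bigr)^{1-\theta}\bigl(\|x_n\|_r^2\bigr)^{\theta},
\qquad\text{i.e.}\qquad
\|x_n\|_q\le \|x_n\|_s^{1-\theta}\,\|x_n\|_r^{\theta}.
$$

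To conclude I would invoke the two hypotheses: $\|x_n\|_r\le M$ for some finite $M=\sup_n\|x_n\|_r$, and $\|x_n\|_s\to 0$. Since $1-\theta>0$, the right-hand side is dominated by $M^{\theta}\,\|x_n\|_s^{1-\theta}$, which tends to $0$; hence $\|x_n\|_q\to 0$. For the remaining case $q\le s$ no interpolation is needed: because $\lambda_j\ge\lambda_1>0$ for all $j$ by Assumption \ref{ASS_1}, the exponent $2(q-s)\le 0$ gives $\lambda_j^{2q}\le \lambda_1^{2(q-s)}\lambda_j^{2s}$, whence $\|x_n\|_q\le \lambda_1^{q-s}\|x_n\|_s\to 0$. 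I do not expect a serious obstacle in this lemma; the only points requiring care are verifying that $\theta\in(0,1)$ and choosing the correct conjugate exponents in H\"older's inequality, both of which are routine, so the argument is essentially a bookkeeping of the weighted $\ell^2$ structure of the norms.
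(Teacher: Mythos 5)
Your proof is correct. It rests on the same underlying fact as the paper's --- that the $t$-norms are weighted $\ell^2$ norms of the coefficients in the eigenbasis of $A$, so that intermediate norms interpolate between extreme ones --- but the execution is genuinely different. You prove the full convexity inequality $\|x_n\|_q\le \|x_n\|_s^{1-\theta}\|x_n\|_r^{\theta}$ with $\theta=(q-s)/(r-s)$ in one application of H\"older, which settles every $q\in(s,r)$ at once and even yields an explicit rate $\|x_n\|_q\le M^{\theta}\|x_n\|_s^{1-\theta}$. The paper instead uses only the midpoint case $\theta=1/2$, in the form $\|x_n\|_{(s+r)/2}^2=\langle |A|^s x_n,|A|^r x_n\rangle\le \|x_n\|_s\sup_n\|x_n\|_r$ (Cauchy--Schwarz), and then bootstraps: after $N$ iterations it reaches all $q<r-(r-s)/2^N$, and lets $N\to\infty$. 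The paper's route avoids invoking H\"older with general conjugate exponents but pays with an induction and an implicit appeal to monotonicity of the norms to pass from the dyadic exponents to arbitrary $q$; your route is more direct and handles the degenerate range $q\le s$ explicitly via the spectral lower bound $\lambda_j\ge\lambda_1>0$, a point the paper glosses over. Both arguments are sound.
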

\begin{proof}
We first prove the result for $q<(r+s)/2$. For every $n$ in $\mathbf{N}$,
\begin{eqnarray*}
\|x_n\|_{\frac{s+r}{2}}^2&=&\langle |A|^\frac{s+r}{2} x_n, |A|^\frac{s+r}{2} x_n \rangle \\
&= &\langle |A|^{s} x_n,  |A|^{r} x_n \rangle \\
&\leq & \|x_n\|_s \sup_{n \in \mathbf{N}}\|x_n\|_{r},
\end{eqnarray*}
which tends to zero as $n$ tends to infinity. Replacing $s$ in the computation above by $(s+r)/2$ gives the result for $q<r-(r-s)/4$. After $N$ iterations of this process, the result is proved for any $q$ less than $r-(r-s)/2^N$ which  tends to $r$ as $N$ tends to infinity.
\end{proof} 
 
The general proof of the main results for the general case $r>0$ is then a consequence of this interpolation lemma, of Proposition~\ref{PRO_croissance_norme_A}, and of the uniform bound on the  $L^1$ and $L^2$ norm of the controls. 
Notice that the bound on the square of the $L^2$ norm of the control taking value in $\{0,\delta\}$ is exactly  $\delta$ times the  $L^1$ norm, since, for every $\delta$ in $\mathbf{R}$,  $u^2=\delta u$ if $u\in \{0,\delta\}$.
The three proof follows exactly the same strategy.

\begin{proof}[Proof of Theorem~\ref{THE_main}]
The sequence of propagators 
$
\Upsilon^{u_{\varepsilon}}_{T_{\varepsilon},0}\phi_j
$
tends to $\hat{\Upsilon}\phi_j$ in the norm of $H$. The sequence of controls 
$u_{\varepsilon}$ is bounded in the $L^{1}$ norm by~\cite[Remark~5.9]{Schrod2}, 
then we can apply Proposition~\ref{PRO_croissance_norme_A} to have a bound on 
the $k/2$-norm. The result then follows from Lemma~\ref{lem:interpolation}.
\end{proof}

\begin{proof}[Proof of Theorem~\ref{THE_main_12}]
The proof follows the proof of Theorem~\ref{THE_main} above. We prove that there exists a sequence of controls
$u_{\varepsilon}:
[0,T_{\varepsilon}]\rightarrow \{0,\delta\}$  such that
$\|u_{\varepsilon}\|_{L^1}\leq \pi/(|b_{pq}+ \delta c_{pq})$ and 
$\|\Upsilon^{u_{\varepsilon}}_{T_{\varepsilon},0}\phi_p-\phi_q\|$ tends to $0$ as $\varepsilon$ tends to $0$. Moreover the sequence $\Upsilon^{u_{\varepsilon}}_{T_{\varepsilon},0}\phi_p$ is bounded for the $k/2$-norm
by Proposition~\ref{PRO_croissance_norme_A} and Lemma~\ref{lem:interpolation} allows to conclude that
$\|\Upsilon^{u_{\varepsilon}}_{T_{\varepsilon},0}\phi_p-\phi_q\|_{r}$ tends to $0$ as $\varepsilon$ tends to $0$
for every $r<k/2$.
\end{proof}

\begin{proof}[Proof of Theorem~\ref{THE_main_weak}]
It is sufficient to notice that the bound on $L^{1}$-norm of the sequence of controls $u_{\varepsilon}$ is given by iteratively apply Theorem~\ref{THE_main_12} to every element of the connectedness chain connecting $p$ to $q$. The proof then follows from Proposition~\ref{PRO_croissance_norme_A} and Lemma~\ref{lem:interpolation} as in the proof of Theorems~\ref{THE_main} and~\ref{THE_main_12}.
\end{proof}

\section{EXAMPLES}

\subsection{Bounded coupling  potentials}\label{SEC_bounded_potentials}

Let $\Omega$ be a compact Riemannian manifold or a bounded domain in $\mathbf{R}^n$. Let $V,W_1,W_2:\Omega \to \mathbf{R}$ be three measurable bounded 
functions. We consider the system
\begin{eqnarray}
\lefteqn{\mathrm{i}\frac{\partial \psi}{\partial t}(x,t)=
(-\Delta +V(x))\psi(x,t) + u(t) W_1(x)\psi(x,t)} \nonumber \\&& \quad \quad  \quad \quad 
\quad \quad  \quad \quad  \quad   + u^2(t) W_2(x) \psi(x,t),\quad \quad \label{EQ_bilinear_compact_borne}
\end{eqnarray}
with $x$ in $\Omega$ and $t$ in $\mathbf{R}$. This system has been studied in~\cite{Morancey} when $\Omega$ is a bounded 
domain of $\mathbf{R}^n$,  and the potentials $W_1$ and $W_2$ are $C^2$.

In order to apply our results, we define $H=L^2(\Omega,\mathbf{C})$, $A:\psi\in D(A) \mapsto 
\mathrm{i}(\Delta -V)\psi$,
$B:\psi \in L^2(\Omega,\mathbf{C}) \mapsto -\mathrm{i}W_1\psi$ and $C:\psi \in L^2(\Omega,
\mathbf{C}) \mapsto -\mathrm{i}W_2\psi$. 
By Kato-Rellich theorem, the domain $D(A)$ of $A$ is equal to 
$H^2_{(0)}=\{\psi \in H^2(\Omega,\mathbf{C})|\psi_{|\partial \Omega}=\Delta \psi_{|\partial 
\Omega}=0\}$, the domain of the Laplacian, if $\Omega$ is a bounded domain of $\mathbf{R}^n$ 
and equal to $H^2(\Omega,\mathbf{C})$ if $\Omega$ is compact manifold.
The operators $B$ and $C$ are bounded from $H$ to $H$ with norms 
$\|W_1\|_{L^\infty}$ and $\|W_2\|_{L^\infty}$, respectively.

We restrict ourselves to the generic case (see \cite{genericity-mario-paolo})
 where $A$ has only simple eigenvalues. 
Without further regularity assumptions on $W_1$ and $W_2$, it is not clear if $(A,B,C,k)$ 
satisfies Assumption~\ref{ASS_1} for any $k>0$. 

By standard regularization procedures for every $\eta>0$, there exist $W_{1,
\eta},W_{2,\eta}:\Omega \to \mathbf{R}$ such that 
(i)  $W_{1,\eta}$ $W_{2,\eta}$ are $C^2$ on $\Omega$, 
(ii) if $\Omega$ is a bounded domain of  $\mathbf{R}^n$, $W_{1,\eta}$ and $W_{2,\eta}$ tend to zero, with their two first  derivatives, on the boundary of $\Omega$, and 
(iii) $\|W_j-W_{j,\eta}\|_{L^1}\leq \eta$ for $j=1, 2$. 
The linear operators $B_\eta:\psi\mapsto W_{1,\eta} \psi$ and $C_\eta: \psi\mapsto W_{2,\eta} \psi$ are bounded from $D(A)$ to $D(A)$. By Proposition 8 of \cite{weakly-coupled}, $(A,B_\eta,C_\eta)$ is $1$-weakly-coupled or, equivalently,  
$(A,B_\eta,C_\eta,1)$ satisfies Assumption~\ref{ASS_1}. 
\begin{remark}
The definition of $\Upsilon^{u,(A,B_\eta,C_\eta)}$ depends on the choice of $B_\eta$ and $C_\eta$, which 
is not unique. 
\end{remark}
The key point of this section is the following observation.
\begin{lemma}\label{PRO_major_distance_regularization}
For every $\eta>0$, for every $u$ in $L^1(\mathbf{R},\mathbf{R})\cap L^2(\mathbf{R},\mathbf{R})$, for every $t$ in $
\mathbf{R}$, for every  $\psi$ in $H$, 
$$\|\Upsilon^{u,(A,B,C)}_{t,0} -\Upsilon^{u,(A,B_\eta,C_\eta)}_{t,0}\|\leq \eta (\|u\|_{L^1}+\|u\|
_{L^2}^2).$$
\end{lemma}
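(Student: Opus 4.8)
The plan is to estimate the difference of the two propagators via the Duhamel (variation-of-constants) formula, treating $(A,B_\eta,C_\eta)$ as the reference dynamics and the genuine operators $(B,C)$ as a perturbation. Since both propagators are unitary on $H$, and the operator norm of $\Upsilon^{u,(A,B,C)}_{t,0}-\Upsilon^{u,(A,B_\eta,C_\eta)}_{t,0}$ is what we must bound, I would first reduce to piecewise constant controls $u$ (the general case following by the density/continuity statement of Proposition~\ref{PRO_well_posedness}), so that all manipulations are justified on strong solutions with initial data in $D(A)$. For such a control, writing $\Upsilon=\Upsilon^{u,(A,B,C)}$ and $\Upsilon_\eta=\Upsilon^{u,(A,B_\eta,C_\eta)}$, the Duhamel identity gives, for $\psi\in D(A)$,
\begin{equation*}
(\Upsilon_{t,0}-\Upsilon_{\eta,t,0})\psi=\int_0^t \Upsilon_{\eta,t,s}\bigl(u(s)(B-B_\eta)+u^2(s)(C-C_\eta)\bigr)\Upsilon_{s,0}\,\psi\,\mathrm{d}s.
\end{equation*}

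Next I would use unitarity of $\Upsilon_{\eta,t,s}$ to drop that factor in norm, and exploit that $B-B_\eta$ is multiplication by $-\mathrm{i}(W_1-W_{1,\eta})$ (similarly for $C-C_\eta$). The key structural point is that $\Upsilon_{s,0}\psi$ is a unitary image of $\psi$, hence an $L^2$-function of modulus controlled pointwise, so that the multiplication operator $W_1-W_{1,\eta}$ acting on it is bounded in $L^2$ not merely by $\|W_1-W_{1,\eta}\|_{L^\infty}$ but, crucially, can be controlled by the $L^1$ bound $\eta$ on $\|W_j-W_{j,\eta}\|_{L^1}$ together with the uniform $L^\infty$ control of the state. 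Taking the operator norm and pulling the scalars $|u(s)|$ and $u^2(s)=|u(s)|^2$ out, the integral factorizes into $\|u\|_{L^1}$ and $\|u\|_{L^2}^2$ multiplied by the $L^1$-type bound $\eta$, yielding precisely
\begin{equation*}
\|\Upsilon^{u,(A,B,C)}_{t,0}-\Upsilon^{u,(A,B_\eta,C_\eta)}_{t,0}\|\leq \eta\bigl(\|u\|_{L^1}+\|u\|_{L^2}^2\bigr).
\end{equation*}

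The main obstacle is the last structural point: multiplication by $W_1-W_{1,\eta}$ is a priori only $L^\infty$-to-$L^\infty$-norm bounded as an operator on $L^2$, and it is not obvious that the $L^1$-closeness hypothesis $\|W_j-W_{j,\eta}\|_{L^1}\le\eta$ suffices. The resolution is that the evolved state $\Upsilon_{s,0}\psi$ is itself uniformly bounded in a stronger norm—by Proposition~\ref{PRO_croissance_norme_A} its $k/2$-norm (here $k=1$) is controlled, hence by Sobolev embedding it lies in $L^\infty(\Omega)$ with a bound depending only on $\|\psi\|$ and on $\|u\|_{L^1}+\|u\|_{L^2}^2$. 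This $L^\infty$ control on the state converts the $L^1$ bound on the potential difference into the desired $L^2$ estimate on $(W_1-W_{1,\eta})\Upsilon_{s,0}\psi$, via $\|(W_1-W_{1,\eta})\Upsilon_{s,0}\psi\|_{L^2}\le \|W_1-W_{1,\eta}\|_{L^1}^{1/2}\,\|(W_1-W_{1,\eta})\|_{L^\infty}^{1/2}\|\Upsilon_{s,0}\psi\|_{L^\infty}$ or a similar interpolation. Verifying that this chain of estimates produces a constant that is genuinely $\eta$ (and not $\eta$ times a state-dependent factor that spoils the clean bound) is where care is required; I would finally extend from $\psi\in D(A)$ to all of $H$ by density and unitarity, and from piecewise constant $u$ to $L^1\cap L^2$ controls by the continuous extension of Proposition~\ref{PRO_well_posedness}.
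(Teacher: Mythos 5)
Your Duhamel skeleton is exactly the intended one: the paper in fact states Lemma~\ref{PRO_major_distance_regularization} without proof, and the proof of Proposition~\ref{PRO_THEO1_cas_borne} reveals the ingredient it relies on, namely the \emph{operator-norm} bounds $\|B-B_\eta\|<\eta$ and $\|C-C_\eta\|<\eta$, which, inserted into your Duhamel identity after discarding the unitary factor $\Upsilon_{\eta,t,s}$, immediately give the bound $\eta(\|u\|_{L^1}+\|u\|_{L^2}^2)$ uniformly on the unit ball of $H$. The genuine gap is in the patch you propose for upgrading the hypothesis $\|W_j-W_{j,\eta}\|_{L^1}\le\eta$ to that operator-norm estimate. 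First, Proposition~\ref{PRO_croissance_norme_A} with $k=1$ controls $\||A|^{1/2}\Upsilon_{s,0}\psi\|$, an $H^1$-type quantity, and $H^1(\Omega)\hookrightarrow L^\infty(\Omega)$ only when $\dim\Omega=1$; for a general bounded domain or compact manifold the uniform $L^\infty$ control of the evolved state that your interpolation requires is unavailable. Second, even where the embedding holds, your chain gives $\|(W_1-W_{1,\eta})\Upsilon_{s,0}\psi\|_{L^2}\le\|W_1-W_{1,\eta}\|_{L^1}^{1/2}\,\|W_1-W_{1,\eta}\|_{L^\infty}^{1/2}\,\|\Upsilon_{s,0}\psi\|_{L^\infty}$, which is of order $\eta^{1/2}$, not $\eta$, and is multiplied by a factor controlled by $\|\psi\|_{1/2}$ rather than by $\|\psi\|$. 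The resulting estimate is therefore neither of the right order in $\eta$ nor uniform over the unit ball of $H$, so your final step of extending from $\psi\in D(A)$ to all of $H$ ``by density and unitarity'' cannot recover the operator-norm inequality as stated.

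The honest conclusion is that the clean bound of the lemma requires $\|W_j-W_{j,\eta}\|_{L^\infty}\le\eta$, so that the multiplication operators $B-B_\eta$ and $C-C_\eta$ have $L^2\to L^2$ norm at most $\eta$; the $L^1$ closeness of the potentials alone does not suffice, and a merely bounded measurable $W_j$ cannot in general be approximated in $L^\infty$ by $C^2$ functions. If you grant the $L^\infty$ regularization (or weaken the conclusion to a state-dependent bound on vectors of $D(|A|^{1/2})$ in dimension one), your Duhamel computation goes through verbatim, and the reduction to piecewise constant controls via Proposition~\ref{PRO_well_posedness} is correctly handled.
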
 
Thanks to Lemma~\ref{PRO_major_distance_regularization}, 
we can apply the results above to system~\eqref{EQ_bilinear_compact_borne}.  For instance Theorem \ref{THE_main} applied to system~\eqref{EQ_bilinear_compact_borne} reads.
\begin{proposition}\label{PRO_THEO1_cas_borne}
Assume that $(A,B,C)$ admits a strongly non-degenerate chain of connectedness. Then, for every $
\varepsilon>0$, for every unitary $\hat{\Upsilon}:H\to H$, for every $l$ in $\mathbf{N}$, for almost every $\alpha>0$
there exists a piecewise constant function $u_\varepsilon:[0,T_\varepsilon]\to \{0,\alpha\}$ such that
$\|\Upsilon^{u_\varepsilon}_{T_\varepsilon,0} \phi_j -\hat{\Upsilon}\phi_j \|< \varepsilon$, for every $j \leq l$.  
\end{proposition}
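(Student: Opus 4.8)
The plan is to deduce the statement from Theorem~\ref{THE_main} applied to the smooth regularization $(A,B_\eta,C_\eta)$, and then to transfer the resulting controls back to the genuine (irregular) system by means of the approximation estimate of Lemma~\ref{PRO_major_distance_regularization}. The structural fact that makes this transfer clean is that the controls produced by Theorem~\ref{THE_main} take values in $\{0,\alpha\}$: along such a control one has $u_\varepsilon B+u_\varepsilon^2 C=u_\varepsilon(B+\alpha C)$ pointwise, and likewise for $B_\eta,C_\eta$, so the propagators of $(A,B,C)$ and of the bilinear system coincide along $u_\varepsilon$ — exactly the mechanism already used at the end of the proof of Theorem~\ref{THE_main}.

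First I would fix $\varepsilon>0$, a unitary $\hat\Upsilon$, and $l\in\mathbf{N}$, and verify that for $\eta$ small enough the regularized triple $(A,B_\eta,C_\eta,1)$ — which satisfies Assumption~\ref{ASS_1} by construction — still admits a strongly non-degenerate chain of connectedness. Since $A$ is unchanged, the spectral half of the non-degeneracy condition is automatic; for the algebraic half it suffices that $b^\eta_{jl}=\langle\phi_j,B_\eta\phi_l\rangle\to b_{jl}$ and $c^\eta_{jl}\to c_{jl}$ as $\eta\to0$, which follows from $|b^\eta_{jl}-b_{jl}|\le\|W_1-W_{1,\eta}\|_{L^1}\,\|\phi_j\|_{L^\infty}\|\phi_l\|_{L^\infty}\le\eta\,\|\phi_j\|_{L^\infty}\|\phi_l\|_{L^\infty}$, so that the non-vanishing of the matrix elements entering the chain is preserved. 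Granting this, Theorem~\ref{THE_main} applies to $(A,B_\eta,C_\eta)$ and yields, for almost every $\alpha>0$, a time $T_\varepsilon>0$ and a piecewise constant $u_\varepsilon:[0,T_\varepsilon]\to\{0,\alpha\}$ with $\|\Upsilon^{u_\varepsilon,(A,B_\eta,C_\eta)}_{T_\varepsilon,0}\phi_j-\hat\Upsilon\phi_j\|<\varepsilon/2$ for every $j\le l$, together with a bound on $\|u_\varepsilon\|_{L^1}$ inherited from \cite[Remark~5.9]{Schrod2} (the almost-every restriction on $\alpha$ being the one produced internally by Theorem~\ref{THE_main} through Lemma~\ref{LEM_perturb_chaine_connexite}).

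Next I would transfer this to the true system via Lemma~\ref{PRO_major_distance_regularization}, which gives
\[
\|\Upsilon^{u_\varepsilon,(A,B,C)}_{T_\varepsilon,0}\phi_j-\Upsilon^{u_\varepsilon,(A,B_\eta,C_\eta)}_{T_\varepsilon,0}\phi_j\|\le\eta\bigl(\|u_\varepsilon\|_{L^1}+\|u_\varepsilon\|_{L^2}^2\bigr)=\eta(1+\alpha)\|u_\varepsilon\|_{L^1},
\]
where the identity $\|u_\varepsilon\|_{L^2}^2=\alpha\|u_\varepsilon\|_{L^1}$ holds precisely because $u_\varepsilon$ is valued in $\{0,\alpha\}$. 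A triangle inequality then bounds $\|\Upsilon^{u_\varepsilon,(A,B,C)}_{T_\varepsilon,0}\phi_j-\hat\Upsilon\phi_j\|$ by $\varepsilon/2+\eta(1+\alpha)\|u_\varepsilon\|_{L^1}$, and it remains to make the second term smaller than $\varepsilon/2$.

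The main obstacle is that $u_\varepsilon$ itself depends on the regularization level $\eta$, so the regularization error is controlled only if $\|u_\varepsilon\|_{L^1}$ is bounded \emph{uniformly} in $\eta$; otherwise the choice of $\eta$ and the choice of the control are circular. This is where I would spend the effort: the $L^1$ estimate of \cite[Remark~5.9]{Schrod2} depends only on the matrix elements of the chain used to build the steering control, and these converge as $\eta\to0$, so one obtains a bound $\|u_\varepsilon\|_{L^1}\le M$ with $M$ independent of $\eta\in(0,\eta_0]$. It then suffices to take $\eta<\varepsilon/(2(1+\alpha)M)$, small enough moreover to guarantee the persistence of the chain used in the second paragraph, to conclude. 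The remaining bookkeeping — the genericity of $\alpha$ and the joint smallness of $\eta$ — is then routine, the only genuinely delicate points being this uniform $L^1$ control and the preservation of the non-degeneracy of the chain under regularization.
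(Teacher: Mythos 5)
Your overall strategy (regularize, control the regularized system, transfer back via Lemma~\ref{PRO_major_distance_regularization}) is close in spirit to the paper's, but you perform the two main steps in the opposite order, and this is where a genuine gap appears. You correctly identify the circularity --- $u_\varepsilon$ depends on $\eta$ while the transfer error is $\eta(1+\alpha)\|u_\varepsilon\|_{L^1}$ --- but your proposed resolution rests on two claims that do not hold as stated. First, a chain of connectedness must connect \emph{every} pair of levels, hence generally consists of infinitely many transitions; the convergence $b^\eta_{jl}\to b_{jl}$ is pointwise, not uniform over the chain, so there need not exist a single $\eta_0$ below which the whole chain survives regularization, and Theorem~\ref{THE_main} (through Theorem~\ref{THE_Control_collectively}) requires the full chain, not merely the finitely many transitions you expect to use. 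Second, the uniform-in-$\eta$ bound on $\|u_\varepsilon\|_{L^1}$ is not just a matter of the matrix elements converging: for the tracking of a general unitary $\hat\Upsilon$ the construction behind \cite[Remark~5.9]{Schrod2} is run on the system $(A,B_\eta+\alpha C_\eta,0)$, whose coupling constant $c_{(A,B_\eta,C_\eta,1)}$ --- and hence the Galerkin dimension and the set of transitions actually exploited --- can blow up as $\eta\to 0$ when $W_1,W_2$ are merely bounded, so the $L^1$ estimate you invoke is not manifestly uniform in $\eta$.

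The paper removes both difficulties by reversing the order of the steps. Since $B+\alpha C$ is a bounded multiplication operator, Theorem~\ref{THE_Control_collectively} applies directly to the \emph{original} bilinear system $(A,B+\alpha C,0)$ and produces a reference control $u:[0,T]\to[0,\alpha]$ with $\|\Upsilon^{u,(A,B+\alpha C,0)}_{T,0}\phi_j-\hat\Upsilon\phi_j\|<\varepsilon/3$; only then is $\eta$ chosen, namely $\eta=\varepsilon/(3\|u\|_{L^1}(1+\alpha))$. The regularized triple $(A,B_\eta,C_\eta,1)$, which does satisfy Assumption~\ref{ASS_1}, is used solely to run the tracking Lemma~\ref{lem:tripoint}, which returns $u_\varepsilon$ valued in $\{0,\alpha\}$ with the a priori bound $\|u_\varepsilon\|_{L^1}\le\|u\|_{L^1}$ --- uniformity in $\eta$ for free. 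The final triangle inequality then combines Lemma~\ref{PRO_major_distance_regularization}, the identity $\Upsilon^{u_\varepsilon,(A,B_\eta,C_\eta)}=\Upsilon^{u_\varepsilon,(A,B_\eta+\alpha C_\eta,0)}$ for $\{0,\alpha\}$-valued controls (which you also use), and the two approximation steps. If you want to keep your ordering, you would have to actually prove the uniform $L^1$ bound and the persistence of the full chain; the paper's ordering makes both unnecessary.
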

\begin{proof}
For every $\alpha>0$ such that $S$ is a strongly non-degenerate chain of connectedness of $(A,B+\alpha 
C,0)$, by Theorem 
\ref{THE_Control_collectively}, there exists a piecewise constant function $u:[0,T]\to [0,\alpha]$ such that 
$\|\Upsilon^{u, (A, B+\alpha C,0)}_{T,0} 
\phi_j -\hat{\Upsilon}\phi_j \|< \varepsilon/3$, for every $j\leq l$. Define 
$$
\eta=\frac{1}{3}\frac{\varepsilon}{\|u\|_{L^1}(1+\alpha)}.
$$
As before choose 
$W_{1,\eta},W_{2,\eta}:\Omega \to \mathbf{R}$ such that 
(i)  $W_{1,\eta}$ $W_{2,\eta}$ are $C^2$ on $\Omega$, 
(ii) if $\Omega$ is a bounded domain of  $\mathbf{R}^n$, $W_{1,\eta}$ and $W_{2,\eta}$ tend to zero, with their two first  derivatives, on the boundary of $\Omega$, and 
(iii) $\|W_j-W_{j,\eta}\|_{L^1}\leq \eta$ for $j=1, 2$. 
Then the  linear operators $B_\eta:\psi\mapsto W_{1,\eta} \psi$ and $C_\eta: \psi\mapsto W_{2,\eta} \psi$
satisfy
$\|B-B_\eta\|<\eta$, $\|C-C_\eta\|<\eta$
and $(A,B_\eta,C_\eta,1)$ satisfies Assumption~\ref{ASS_1}.

By Lemma \ref{lem:tripoint}, there exists a piecewise constant function 
$u_\varepsilon:[0,T_\varepsilon]\to \{0,\alpha\}$ such that $\|u_\varepsilon\|_{L^1}\leq \|u\|_{L^1}$ and
$\|\Upsilon^{u_\varepsilon, (A, B_\eta+\alpha C_\eta,0)}_{T_\varepsilon,0} 
\phi_j -\Upsilon^{u, (A, B_\eta+\alpha C_\eta,0)}_{T,0} 
\phi_j \|< \varepsilon/3$, for every $j\leq l$.

Notice that 
$$
\Upsilon^{u_\varepsilon, (A, B_\eta+\alpha C_\eta,0)}_{T_\varepsilon,0}= \Upsilon^{u_\varepsilon, (A, B_\eta, C_\eta)}_{T_\varepsilon,0},
$$
and   $\|u_\varepsilon\|_{L^2}^2=\alpha \|u_\varepsilon\|_{L^1}$ since $u_\varepsilon$ takes value in 
$\{0,\alpha\}$.

Finally, for every $j\leq l$,
\begin{eqnarray}
\lefteqn{\|\Upsilon^{u_\varepsilon,(A,B,C)}_{T_\varepsilon,0} \phi_j -\hat{\Upsilon}\phi_j \| } \nonumber \\
 &\leq  & \|\Upsilon^{u_\varepsilon,(A,B,C)}_{T_\varepsilon,0} \phi_j - \Upsilon^{u_\varepsilon,(A,B_\eta,C_\eta)}_{T_
 \varepsilon,0} \phi_j\| \nonumber  \\
 && \quad + 
 \|\Upsilon^{u_\varepsilon,(A,B_\eta,C_\eta)}_{T_\varepsilon,0} \phi_j - \Upsilon^{u_\varepsilon,(A,B_\eta +\alpha C_
 \eta,0)}_{T_\varepsilon,0} \phi_j\| \nonumber  \\
 && \quad + 
 \|\Upsilon^{u_\varepsilon,(A,B_\eta +\alpha C_\eta,0)}_{T_\varepsilon,0} \phi_j
 -  \Upsilon^{u,(A,B_\eta +\alpha C_\eta,0)}_{T,0} \phi_j\| \nonumber \\
 &&\quad  + \| \Upsilon^{u,(A,B_\eta +\alpha C_\eta,0)}_{T,0} \phi_j -\hat{\Upsilon}\phi_j \| \\
 & \leq & \frac{\varepsilon}{3} + 0 + \frac{\varepsilon}{3} + \frac{\varepsilon}{3}=\varepsilon. 
\end{eqnarray}
Proposition \ref{PRO_THEO1_cas_borne} follows by observing that $S$ is a strongly non-degenerate chain of 
connectedness of $(A,B+\alpha C,0)$ for almost every $\alpha$ in $\mathbf{R}$, see Lemma 
\ref{LEM_perturb_chaine_connexite}.
\end{proof}

\subsection{Perturbation of the harmonic oscillator}\label{SEC_pert_harm_oscil}
The quantum harmonic oscillator is among the most important examples of quantum
system (see, for instance, \cite[Complement $G_V$]{cohen77}). Its controlled
bilinear version has been extensively studied (see, for instance, \cite{Rouchon,illner} and references therein).

We consider here a 1D-model involving, in addition to the standard bilinear term modeling a constant electric field, a 
Gaussian perturbation. Precisely, for given constant $a>0, b$, and $c$, 
the dynamics is given, for $x$ in $\mathbf{R}$, by: 
\begin{equation}\label{EQ_dyn_harm_osc_perturb}
\mathrm{i}\frac{\partial \psi}{\partial t}=(-\Delta +x^2)\psi + u(t) x \psi + u^2(t) e^{-
ax^2+bx+c} \psi
\end{equation}
With the notations of Section~\ref{SEC_framework} we have $H=L^2(\mathbf{R},\mathbf{C})$, 
$A:\psi \mapsto \mathrm{i}(\Delta-x^2)\psi$, 
$B:\psi\mapsto -\mathrm{i}x\psi$ and $C:\psi \mapsto -\mathrm{i}e^{-ax^2+bx+c}\psi$

A Hilbert basis of $H$ made of eigenvectors of $A$ is given by the sequence of
the Hermite functions $(\phi_n)_{n \in \mathbf{N}}$, associated with the
sequence $( - \mathrm{i} \lambda_n)_{n \in \mathbf{N}}$ of eigenvalues where
$\lambda_n=n-1/2$ for every $n$ in $\mathbf{N}$. In the basis $(\phi_n)_{n \in
\mathbf{N}}$, $B$ admits a tri-diagonal structure
$$
\langle \phi_j,B\phi_k\rangle = \left \{\begin{array}{cl}
- \mathrm{i} \sqrt{\frac k2} & \mbox{if } j=k-1,\\
- \mathrm{i}\sqrt{\frac{k+1}2} & \mbox{if } j=k+1,\\
0 & \mbox{otherwise.}
\end{array} \right.
$$
The operator $C$ couples most of the energy levels of $A$, see \cite[Proposition 6.4]{Schrod}.

For every $k$ in $\mathbf{N}$, the system $(A,B,0,k)$ satisfies Assumption \ref{ASS_1}  
 (see Section IV.E in \cite{weakly-coupled}) and
\begin{eqnarray*}
c_k(A,B,0) &\leq &3^{k}-1.
\end{eqnarray*}
For every $k$ in $\mathbf{N}$, a direct computation shows that $C$ is bounded from $D(|A|^k)$ to $D(|A|^k)$.
Hence, by Proposition 6 of \cite{weakly-coupled}, $(A,0,C,k)$ satisfies Assumption \ref{ASS_1} for every $k$.
Finally, $(A,B,C,k)$ satisfies Assumption \ref{ASS_1} for every $k$.

The quantum harmonic oscillator $(A,B,0)$ is not controllable (in any reasonable
sense) as proved in~\cite{Rouchon}. We aim at proving the following.
\begin{proposition}\label{PRO_contr_osc_harm_perturb}Assume that $\sqrt{1-a}$ and $b$ are algebraically independent.
 Then, for every $\varepsilon>0$, for every $j$ in $\mathbf{N}$, there exist $T>0$ and  
 a piecewise constant function $u:[0,T]\to \mathbf{R}$ 
 such that $\|\Upsilon^u_{T,0}\phi_1-\phi_j\|<\varepsilon$.
\end{proposition}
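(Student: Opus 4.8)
The plan is to apply Theorem~\ref{THE_main_weak} (or equivalently the iterated use of Theorem~\ref{THE_main_12}) to the system $(A,B,C,k)$ defined in the statement. For this we must produce a value $\delta>0$ for which the full index set $S=\mathbf{N}^2$ (or at least a set connecting level $1$ to level $j$) is a \emph{weakly non-degenerate chain of connectedness} of $(A,B+\delta C,0)$. The spectrum of the harmonic oscillator is $\lambda_n=n-1/2$, so the gaps are $\lambda_j-\lambda_k=j-k$, all integers. First I would record the degeneracy structure: for the harmonic oscillator many transitions share the same gap (e.g.\ $\lambda_{j}-\lambda_{j-1}=1$ for every $j$), so the pure bilinear operator $B$ alone gives only resonant, highly degenerate transitions. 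The role of $C$ is precisely to break these degeneracies.

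The key analytic step is to understand the matrix elements $c_{jk}=\langle\phi_j, e^{-ax^2+bx+c}\phi_k\rangle$ of the Gaussian multiplication operator in the Hermite basis. I would compute (or invoke a generating-function identity for Hermite functions) that these matrix elements are nonzero for a rich set of pairs $(j,k)$ and, crucially, that the combined operator $B+\delta C$ connects level $1$ to every level $j$ through a chain of transitions. Here the hypothesis that $\sqrt{1-a}$ and $b$ are \emph{algebraically independent} enters: it guarantees that the relevant Gaussian integrals do not vanish and, simultaneously, that the family of couplings is generic enough that one can choose $\delta$ making each transition in the chain weakly non-degenerate. Concretely, for a pair $(p,q)$ to be a weakly non-degenerate transition of $(A,B+\delta C,0)$ one needs $b_{pq}+\delta c_{pq}\neq 0$ together with the gap condition: whenever $|\lambda_p-\lambda_q|=|\lambda_m-\lambda_n|$ one has $\{p,q\}=\{m,n\}$, or $b_{mn}+\delta c_{mn}=0$, or $\{m,n\}\cap\{p,q\}=\emptyset$. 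I would verify the gap condition by showing that the only competing transitions with the same integer gap are either disjoint from $\{p,q\}$ or can be killed by the algebraic-independence hypothesis through the choice of $\delta$.

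I expect the main obstacle to be exactly this verification of weak non-degeneracy for the chosen transitions, since the oscillator gaps are massively degenerate (every gap is an integer and is realized by infinitely many pairs of adjacent-type levels). The algebraic independence of $\sqrt{1-a}$ and $b$ should be used to force the Gaussian matrix elements entering any offending resonance to be mutually incommensurable, so that for all but a measure-zero (indeed countable) set of $\delta$ the weak non-degeneracy condition holds for every transition in a fixed connecting chain; this is the delicate, computation-heavy part and parallels the genericity argument of Lemma~\ref{LEM_perturb_chaine_connexite}. Once a suitable $\delta$ is fixed and the chain of connectedness linking $1$ to $j$ is exhibited, the proof concludes immediately: Theorem~\ref{THE_main_weak} applied to $(A,B,C,k)$ with this $\delta$ yields a piecewise constant $u:[0,T]\to\{0,\delta\}$ with $\|\Upsilon^u_{T,0}\phi_1-\phi_j\|<\varepsilon$, which is the desired statement.
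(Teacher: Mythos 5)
Your plan has a fatal structural flaw: you try to repair the degeneracy of the harmonic oscillator by choosing $\delta$ in $B+\delta C$, but the obstruction lives in the spectrum of $A$, which is untouched by any such choice. Since $\lambda_n=n-1/2$, every adjacent pair $(n,n+1)$ realizes the same gap $1$, and any two consecutive pairs $(n,n+1)$ and $(n+1,n+2)$ share the index $n+1$. For $(n,n+1)$ to be a \emph{weakly} non-degenerate transition of $(A,B+\delta C,0)$ you would therefore need $b_{n+1,n+2}+\delta c_{n+1,n+2}=0$ (the only available escape clause, since the pairs are neither equal nor disjoint); but then the transition $(n+1,n+2)$ is dead and cannot appear in your chain, and in any case you cannot kill infinitely many such couplings with a single $\delta$ because $b_{n,n+1}=-\mathrm{i}\sqrt{(n+1)/2}\neq 0$ and the ratios $-b_{n,n+1}/c_{n,n+1}$ do not coincide. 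No $\delta$ makes $(A,B+\delta C,0)$ admit a weakly non-degenerate chain of connectedness, so Theorem~\ref{THE_main_weak} simply does not apply to the system as you have set it up. This is consistent with the paper's remark that $(A,B,0)$ is not controllable in any reasonable sense: the algebraic independence of $\sqrt{1-a}$ and $b$ cannot be cashed in through the matrix elements $c_{jk}$ alone.

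The missing idea is to de-resonate the \emph{drift}. The paper applies a constant control offset $\mu$, replacing $A$ by $A+\mu(B+\alpha C)$; by Kato's analytic perturbation theory (Proposition~\ref{PRO_perturbation_spectrale_analytique}) the perturbed eigenvalues $\boldsymbol{\lambda}^\alpha_n(\mu)$ are no longer equally spaced, and by Proposition 6.4 of \cite{Schrod} each pair $(n,n+1)$ becomes a strongly non-degenerate transition of $(A+\mu(B+2\alpha C),B+\alpha C,0)$ for almost every $(\alpha,\mu)$ --- this is where the Gaussian term $C$ and the algebraic independence hypothesis actually enter. One then applies Theorem~\ref{THE_main_12} to the shifted system with perturbed eigenvectors $\boldsymbol{\phi}^\alpha_n(\mu)$, chooses $\alpha,\mu$ small so that these are $\varepsilon$-close to the $\phi_n$, and recovers a control for the original system as $u=v+\mu$, proceeding by induction along the chain $(1,2),(2,3),\ldots,(j-1,j)$. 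Without this drift perturbation step your argument cannot be completed.
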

The main tool in the proof of Proposition~\ref{PRO_contr_osc_harm_perturb} is the following analytic perturbation argument (see Chapter VII of \cite{Kato}).
\begin{proposition}[\cite{Kato}]\label{PRO_perturbation_spectrale_analytique}
For every $\alpha$ in $\mathbf{R}$ and $n$ in $\mathbf{N}$, there exist two analytic mappings 
$\boldsymbol{\lambda}^\alpha_n:\mathbf{R}\to\mathbf{R}$ and 
$\boldsymbol{\phi}^\alpha_n:\mathbf{R}\to L^2(\mathbf{R},\mathbf{C})$ such that 
(i) for every $t$ in $\mathbf{R}$, $A+t(B+\alpha C)\boldsymbol{\phi}^\alpha_n(t)=-\mathrm{i}
\boldsymbol{\lambda}^\alpha_n(t)\boldsymbol{\phi}^\alpha_n(t)$; (ii) 
$\left .\frac{d}{dt}\boldsymbol{\lambda}^\alpha_n(t) \right |_{0}=b_{nn}+\alpha c_{nn}$; (iii) for every $t$ in $
\mathbf{R}$, $(\boldsymbol{\phi}^\alpha_n(t))_{n \in \mathbf{N}}$ is a Hilbert basis of $L^2(\mathbf{R},\mathbf{C})$;
(iv) $(\boldsymbol{\phi}^\alpha_n(0))_{n \in \mathbf{N}}=({\phi}_n)_{n \in \mathbf{N}}$.   
\end{proposition}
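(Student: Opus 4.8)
The plan is to read the statement as a direct application of Kato's real-analytic perturbation theory for self-adjoint operators with compact resolvent, once the skew-adjoint generator has been turned into a self-adjoint one. Accordingly, the first step is to set $H(t) := \mathrm{i}(A + t(B+\alpha C))$ and to observe that $H(t) = H_0 + t V_\alpha$, where $H_0 := -\Delta + x^2$ is the self-adjoint harmonic oscillator Hamiltonian (with eigenvalues $\lambda_n = n-1/2$ and eigenfunctions $\phi_n$) and $V_\alpha$ is multiplication by the real function $x + \alpha e^{-ax^2+bx+c}$. An eigenpair of $A + t(B+\alpha C)$ with eigenvalue $-\mathrm{i}\boldsymbol{\lambda}$ as in (i) is exactly an eigenpair of the self-adjoint operator $H(t)$ with the \emph{real} eigenvalue $\boldsymbol{\lambda}$; hence it suffices to produce real-analytic eigenvalue and eigenvector branches for the family $H(t)$.

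The second step is to verify that $t \mapsto H(t)$ is a self-adjoint holomorphic family of type~(A) with compact resolvent. The term $\alpha e^{-ax^2+bx+c}$ is a bounded multiplication operator since $a>0$. For the term $x$, the operator inequality $x^2 \le H_0$ together with $\langle \psi, H_0 \psi\rangle \le \frac{\eta}{2}\|H_0\psi\|^2 + \frac{1}{2\eta}\|\psi\|^2$ (valid for every $\eta>0$) shows that multiplication by $x$ is $H_0$-bounded with relative bound zero; hence $V_\alpha$ is infinitesimally small with respect to $H_0$. By the Kato--Rellich theorem $H(t)$ is self-adjoint on the fixed domain $D(H_0) = D(A)$ for every real $t$ and depends holomorphically (indeed affinely) on $t \in \mathbf{C}$, so the family is of type~(A). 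Since $H_0$ has compact resolvent and $V_\alpha$ is relatively bounded with bound zero, $H(t)$ has compact resolvent, hence purely discrete spectrum, for every $t$.

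The third step is to invoke Kato's theorem on the analytic perturbation of self-adjoint operators with discrete spectrum (Chapter~VII of \cite{Kato}, in particular the one-parameter Rellich statement): for a self-adjoint holomorphic family of type~(A) on the real axis with discrete spectrum, the eigenvalues and a complete orthonormal system of eigenvectors can be chosen real-analytic in the real parameter, globally on $\mathbf{R}$. Applied to $H(t)$, this yields real-analytic maps $\boldsymbol{\lambda}^\alpha_n$ and $\boldsymbol{\phi}^\alpha_n$ satisfying (i) and, by completeness, (iii). At $t=0$ one has $H(0) = H_0$, whose spectrum is simple with eigenvectors $\phi_n$; fixing the normalization of each branch at the origin gives $\boldsymbol{\phi}^\alpha_n(0) = \phi_n$, which is (iv). Finally, (ii) is the first-order (Feynman--Hellmann) formula: differentiating the eigenvalue relation of (i) at $t=0$ and taking the Hermitian product with $\phi_n = \boldsymbol{\phi}^\alpha_n(0)$, the eigenvector-derivative contributions cancel (using $A\phi_n = -\mathrm{i}\lambda_n\phi_n$ and the skew-adjointness of $A$), leaving the first-order variation of the eigenvalue equal to $\langle \phi_n, (B+\alpha C)\phi_n\rangle = b_{nn}+\alpha c_{nn}$, which is (ii).

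The delicate point -- and the reason the statement is quoted from \cite{Kato} rather than proved by hand -- is the \emph{global} real-analyticity asserted in (i) and (iii). Away from $t=0$ the eigenvalues of $H(t)$ may collide, and for families depending on several parameters the analyticity of the individual branches generally breaks down at such crossings. What saves the one-parameter situation is Rellich's observation that an analytic self-adjoint family in a single real variable admits eigenprojections and eigenvectors that continue analytically through crossings; securing this global continuation, rather than the purely local perturbation expansion, is the main obstacle, and it is exactly what Kato's Chapter~VII supplies.
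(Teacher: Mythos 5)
Your proposal is correct, but note that the paper contains no proof of this proposition at all: it is imported wholesale from Chapter~VII of Kato's book (Rellich's theorem on one-parameter analytic families of self-adjoint operators), which is exactly why the statement carries the citation. So your write-up does not diverge from the paper so much as supply the verification the paper leaves implicit, and it supplies it correctly: passing to $H(t)=\mathrm{i}\bigl(A+t(B+\alpha C)\bigr)=H_{0}+tV_{\alpha}$ turns the skew-adjoint eigenproblem into a self-adjoint one; the infinitesimal $H_{0}$-bound of multiplication by $x$ (via $x^{2}\leq H_{0}$ and Cauchy--Schwarz) together with boundedness of the Gaussian (since $a>0$) makes $H(t)$ a self-adjoint holomorphic family of type~(A) with compact resolvent, so Kato's Theorem VII.3.9 delivers globally analytic eigenvalue branches and a complete orthonormal analytic family of eigenvectors, with (iv) following from simplicity of the oscillator spectrum at $t=0$ and (ii) from Feynman--Hellmann. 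One wrinkle worth flagging: with the paper's conventions $B=-\mathrm{i}x$ and $C=-\mathrm{i}e^{-ax^{2}+bx+c}$, the Feynman--Hellmann computation literally yields $\frac{d}{dt}\boldsymbol{\lambda}^{\alpha}_{n}(t)\big|_{t=0}=\mathrm{i}\,(b_{nn}+\alpha c_{nn})=\langle \phi_{n},(x+\alpha e^{-ax^{2}+bx+c})\phi_{n}\rangle$, a real number, whereas $b_{nn}+\alpha c_{nn}$ itself is purely imaginary; the factor $\mathrm{i}$ is missing from the paper's own statement (ii), so your final identity inherits, rather than introduces, that abuse of notation. Your closing remark also correctly isolates the genuinely delicate point --- global analytic continuation of eigenvalues and eigenprojections through crossings, which is special to a single real parameter --- and that is precisely the content the citation to Kato is carrying in the paper.
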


\begin{proof}[Proof of Proposition \ref{PRO_contr_osc_harm_perturb}]
From Proposition 6.4 of \cite{Schrod}, for every $n$ in $\mathbf{N}$, 
the pair $(n,n+1)$ is a strongly non-degenerate transition of 
$(A+\mu(B+2 \alpha C),B+\alpha C,0)$ for almost every $(\alpha,\mu)$ in $\mathbf{R}^2$.    

We proceed by induction. For $p=2$,
choose $\alpha$ and $\mu$ positive small enough such that, with the notations of Proposition  
\ref{PRO_perturbation_spectrale_analytique}, 
$\|\boldsymbol{\phi}^\alpha_j(\mu)-\alpha_j\|<\varepsilon/4$ for $j=1,2$, 
$|b_{12}+\alpha c_{12}|=\left | 1+ 
\frac{\alpha b e^{c-\frac{b^2}{4(a-1)}}}{\sqrt{2}(1-a)^{3/2}}\right |\neq 0$ and 
$\mu^2 +2 \mu + \mu \alpha < \frac{\varepsilon}{4\pi \| C\|}$
By Theorem \ref{THE_main_12}, there exists a piecewise constant function $v:[0,T]\to [0,1]$ such that 
$\|\Upsilon^{v,(A+ \mu(B+\alpha C), B, C)}_T\boldsymbol{\phi}^\alpha_1(\mu)-\boldsymbol{\phi}^\alpha_2(\mu)\|<
\varepsilon/4$. Then, defining $u:t\in [0,T] \mapsto v(t)+ \mu$:
\begin{eqnarray*}
\lefteqn{\|\Upsilon^{u,(A,B,C)}_{T,0}\phi_1-\phi_2\|}\\
&\leq & \|\Upsilon^{u,(A,B,C)}_{T,0}\phi_1-\Upsilon^{u,(A,B,C)}_{T,0}\boldsymbol{\phi}^\alpha_1(\mu)\| \\
&& \quad + \|\Upsilon^{u,(A,B,C)}_{T,0}\boldsymbol{\phi}^\alpha_1(\mu) -
\Upsilon^{v,(A+\mu(B+\alpha C),B,C)}_{T,0}\boldsymbol{\phi}^\alpha_1(\mu) \|\\
&& \quad +
\|\Upsilon^{v,(A+\mu(B+\alpha C),B,C)}_{T,0}\boldsymbol{\phi}^\alpha_1(\mu)-\boldsymbol{\phi}^\alpha_2(\mu)\| \\
&& \quad +
\|\boldsymbol{\phi}^\alpha_2(\mu) -\phi_2\|\\
&\leq & \frac{ \varepsilon}{4} + \frac{ \varepsilon}{4} + \frac{ \varepsilon}{4} + \frac{ \varepsilon}{4}.
\end{eqnarray*}
The general step is similar, replacing $b_{12}=-\mathrm{i}$ with $b_{n,n+1}=-\mathrm{i}\sqrt{(n+1)/2}$, and choosing $\alpha$ small enough such that $b_{n,n+1}+\alpha c_{n,n+1}\neq 0$.
\end{proof}

\section{CONCLUSIONS AND FUTURE WORKS}

\subsection{Conclusions}
In this analysis, we present a general approximate controllability result for 
infinite dimensional 
quantum systems when a
polarizability term is considered in addition to the standard dipolar one. For the important case of 
transfer between two eigenstates of the free Hamiltonian, simple periodic control laws may be used. 

\subsection{Future Works}
Many questions concerning the controllability of infinite dimensional quantum systems are still open. 
Among many other  topics, one can cite the extension of the controllability results to systems involving
better approximation of the external field, involving higher powers of the control, 
or the existence (and the estimation) of a minimal time needed to steer a quantum system from a 
given source to a given neighborhood of a given target.²

\section{ACKNOWLEDGMENTS}
It is a pleasure for the third author to thank Karine Beauchard for interesting discussions about the 
methodology used in \cite{Morancey}.

\bibliographystyle{IEEEtran}
\bibliography{biblio}

\end{document}